\begin{document}

\newtheorem{theorem}{Theorem}
\newtheorem{lemma}{Lemma}
\newtheorem{proposition}{Proposition}
\newtheorem{rmk}{Remark}
\newtheorem{example}{Example}
\newtheorem{exercise}{Exercise}
\newtheorem{definition}{Definition}
\newtheorem{corollary}{Corollary}
\newtheorem{notation}{Notation}
\newtheorem{claim}{Claim}

\newtheorem{dif}{Definition}

 \newtheorem{thm}{Theorem}[section]
 \newtheorem{cor}[thm]{Corollary}
 \newtheorem{lem}[thm]{Lemma}
 \newtheorem{prop}[thm]{Proposition}
 \theoremstyle{definition}
 \newtheorem{defn}[thm]{Definition}
 \theoremstyle{remark}
 \newtheorem{rem}[thm]{Remark}
 \newtheorem*{ex}{Example}
 \numberwithin{equation}{section}

\newcommand{\vertiii}[1]{{\left\vert\kern-0.25ex\left\vert\kern-0.25ex\left\vert #1
    \right\vert\kern-0.25ex\right\vert\kern-0.25ex\right\vert}}

\newcommand{\R}{{\mathbb R}}
\newcommand{\C}{{\mathbb C}}
\newcommand{\U}{{\mathcal U}}
\newcommand{\norm}[1]{\left\|#1\right\|}
\renewcommand{\(}{\left(}
\renewcommand{\)}{\right)}
\renewcommand{\[}{\left[}
\renewcommand{\]}{\right]}
\newcommand{\f}[2]{\frac{#1}{#2}}
\newcommand{\im}{i}
\newcommand{\cl}{{\mathcal L}}
\newcommand{\ck}{{\mathcal K}}

\newcommand{\al}{\alpha}
\newcommand{\be}{\beta}
\newcommand{\wh}[1]{\widehat{#1}}
\newcommand{\ga}{\gamma}
\newcommand{\Ga}{\Gamma}
\newcommand{\de}{\delta}
\newcommand{\ben}{\beta_n}
\newcommand{\De}{\Delta}
\newcommand{\ve}{\varepsilon}
\newcommand{\ze}{\zeta}
\newcommand{\Th}{\Theta}
\newcommand{\ka}{\kappa}
\newcommand{\la}{\lambda}
\newcommand{\laj}{\lambda_j}
\newcommand{\lak}{\lambda_k}
\newcommand{\La}{\Lambda}
\newcommand{\si}{\sigma}
\newcommand{\Si}{\Sigma}
\newcommand{\vp}{\varphi}
\newcommand{\om}{\omega}
\newcommand{\Om}{\Omega}
\newcommand{\ra}{\rightarrow}

\newcommand{\ro}{{\mathbf R}}
\newcommand{\rn}{{\mathbf R}^n}
\newcommand{\rd}{{\mathbf R}^d}
\newcommand{\rmm}{{\mathbf R}^m}
\newcommand{\rone}{\mathbf R}
\newcommand{\rtwo}{\mathbf R^2}
\newcommand{\rthree}{\mathbf R^3}
\newcommand{\rfour}{\mathbf R^4}
\newcommand{\ronen}{{\mathbf R}^{n+1}}
\newcommand{\ku}{\mathbf u}
\newcommand{\kw}{\mathbf w}
\newcommand{\kf}{\mathbf f}
\newcommand{\kz}{\mathbf z}

\newcommand{\N}{\mathbf N}

\newcommand{\tn}{\mathbf T^n}
\newcommand{\tone}{\mathbf T^1}
\newcommand{\ttwo}{\mathbf T^2}
\newcommand{\tthree}{\mathbf T^3}
\newcommand{\tfour}{\mathbf T^4}

\newcommand{\zn}{\mathbf Z^n}
\newcommand{\zp}{\mathbf Z^+}
\newcommand{\zone}{\mathbf Z^1}
\newcommand{\zz}{\mathbf Z}
\newcommand{\ztwo}{\mathbf Z^2}
\newcommand{\zthree}{\mathbf Z^3}
\newcommand{\zfour}{\mathbf Z^4}

\newcommand{\hn}{\mathbf H^n}
\newcommand{\hone}{\mathbf H^1}
\newcommand{\htwo}{\mathbf H^2}
\newcommand{\hthree}{\mathbf H^3}
\newcommand{\hfour}{\mathbf H^4}

\newcommand{\cone}{\mathbf C^1}
\newcommand{\ctwo}{\mathbf C^2}
\newcommand{\cthree}{\mathbf C^3}
\newcommand{\cfour}{\mathbf C^4}
\newcommand{\dpr}[2]{\langle #1,#2 \rangle}

\newcommand{\sn}{\mathbf S^{n-1}}
\newcommand{\sone}{\mathbf S^1}
\newcommand{\stwo}{\mathbf S^2}
\newcommand{\sthree}{\mathbf S^3}
\newcommand{\sfour}{\mathbf S^4}

\newcommand{\lp}{L^{p}}
\newcommand{\lppr}{L^{p'}}
\newcommand{\lqq}{L^{q}}
\newcommand{\lr}{L^{r}}
\newcommand{\echi}{(1-\chi(x/M))}
\newcommand{\chip}{\chi'(x/M)}

\newcommand{\wlp}{L^{p,\infty}}
\newcommand{\wlq}{L^{q,\infty}}
\newcommand{\wlr}{L^{r,\infty}}
\newcommand{\wlo}{L^{1,\infty}}

\newcommand{\lprn}{L^{p}(\rn)}
\newcommand{\lptn}{L^{p}(\tn)}
\newcommand{\lpzn}{L^{p}(\zn)}
\newcommand{\lpcn}{L^{p}(\cn)}
\newcommand{\lphn}{L^{p}(\cn)}

\newcommand{\lprone}{L^{p}(\rone)}
\newcommand{\lptone}{L^{p}(\tone)}
\newcommand{\lpzone}{L^{p}(\zone)}
\newcommand{\lpcone}{L^{p}(\cone)}
\newcommand{\lphone}{L^{p}(\hone)}

\newcommand{\lqrn}{L^{q}(\rn)}
\newcommand{\lqtn}{L^{q}(\tn)}
\newcommand{\lqzn}{L^{q}(\zn)}
\newcommand{\lqcn}{L^{q}(\cn)}
\newcommand{\lqhn}{L^{q}(\hn)}

\newcommand{\lo}{L^{1}}
\newcommand{\lt}{L^{2}}
\newcommand{\li}{L^{\infty}}
\newcommand{\beqn}{\begin{eqnarray*}}
\newcommand{\eeqn}{\end{eqnarray*}}
\newcommand{\pplus}{P_{Ker[\cl_+]^\perp}}

\newcommand{\co}{C^{1}}
\newcommand{\ci}{C^{\infty}}
\newcommand{\coi}{C_0^{\infty}}

\newcommand{\ca}{\mathcal A}
\newcommand{\cs}{\mathcal S}
\newcommand{\cm}{\mathcal M}
\newcommand{\cf}{\mathcal F}
\newcommand{\cb}{\mathcal B}
\newcommand{\ce}{\mathcal E}
\newcommand{\cd}{\mathcal D}
\newcommand{\cn}{\mathcal N}
\newcommand{\cz}{\mathcal Z}
\newcommand{\crr}{\mathbf R}
\newcommand{\cc}{\mathcal C}
\newcommand{\ch}{\mathcal H}
\newcommand{\cq}{\mathcal Q}
\newcommand{\cp}{\mathcal P}
\newcommand{\cx}{\mathcal X}
\newcommand{\eps}{\epsilon}

\newcommand{\pv}{\textup{p.v.}\,}
\newcommand{\loc}{\textup{loc}}
\newcommand{\intl}{\int\limits}
\newcommand{\iintl}{\iint\limits}
\newcommand{\dint}{\displaystyle\int}
\newcommand{\diint}{\displaystyle\iint}
\newcommand{\dintl}{\displaystyle\intl}
\newcommand{\diintl}{\displaystyle\iintl}
\newcommand{\liml}{\lim\limits}
\newcommand{\suml}{\sum\limits}
\newcommand{\ltwo}{L^{2}}
\newcommand{\supl}{\sup\limits}
\newcommand{\df}{\displaystyle\frac}
\newcommand{\p}{\partial}
\newcommand{\Ar}{\textup{Arg}}
\newcommand{\abssigk}{\widehat{|\si_k|}}
\newcommand{\ed}{(1-\p_x^2)^{-1}}
\newcommand{\tT}{\tilde{T}}
\newcommand{\tV}{\tilde{V}}
\newcommand{\wt}{\widetilde}
\newcommand{\Qvi}{Q_{\nu,i}}
\newcommand{\sjv}{a_{j,\nu}}
\newcommand{\sj}{a_j}
\newcommand{\pvs}{P_\nu^s}
\newcommand{\pva}{P_1^s}
\newcommand{\cjk}{c_{j,k}^{m,s}}
\newcommand{\Bjsnu}{B_{j-s,\nu}}
\newcommand{\Bjs}{B_{j-s}}
\newcommand{\Ly}{L_i^y}
\newcommand{\dd}[1]{\f{\partial}{\partial #1}}
\newcommand{\czz}{Calder\'on-Zygmund}
\newcommand{\chh}{\mathcal H}

\newcommand{\lbl}{\label}
\newcommand{\beq}{\begin{equation}}
\newcommand{\eeq}{\end{equation}}
\newcommand{\beqna}{\begin{eqnarray*}}
\newcommand{\eeqna}{\end{eqnarray*}}
\newcommand{\bp}{\begin{proof}}
\newcommand{\ep}{\end{proof}}
\newcommand{\bprop}{\begin{proposition}}
\newcommand{\eprop}{\end{proposition}}
\newcommand{\bt}{\begin{theorem}}
\newcommand{\et}{\end{theorem}}
\newcommand{\bex}{\begin{Example}}
\newcommand{\eex}{\end{Example}}
\newcommand{\bc}{\begin{corollary}}
\newcommand{\ec}{\end{corollary}}
\newcommand{\bcl}{\begin{claim}}
\newcommand{\ecl}{\end{claim}}
\newcommand{\bl}{\begin{lemma}}
\newcommand{\el}{\end{lemma}}
\newcommand{\dea}{(-\De)^\be}
\newcommand{\naa}{|\nabla|^\be}
\newcommand{\cj}{{\mathcal J}}

\title[Ground states for the Kawahara equation ]
{On   the normalized ground states for the Kawahara equation  and a fourth order NLS}

\author[Iurii  Posukhovskyi]{Iurii Posukhovskyi }
 \address{ Department of Mathematics,
University of Kansas,
1460 Jayhawk Boulevard,  Lawrence KS 66045--7523, USA}

\email{i.posukhovskyi@ku.edu}

\thanks{ Posukhovskyi is partially supported from a graduate fellowship by NSF-DMS under grant  \# 1614734.   Stefanov    is partially  supported by  NSF-DMS under grant  \# 1908626.}

\author[Atanas Stefanov]{\sc Atanas Stefanov}
\address{ Department of Mathematics,
University of Kansas,
1460 Jayhawk Boulevard,  Lawrence KS 66045--7523, USA}
\email{stefanov@ku.edu}

\subjclass[2010]{Primary 35Q55, 35 Q51, 35G16}

\keywords{ground states, Kawahara equation, fourth order Schr\"odinger equation}

\date{\today}
 
\begin{abstract}

We consider the Kawahara model and two fourth order semi-linear Schr\"odinger equations in any spatial dimension. 
We construct the corresponding normalized ground states, which we rigorously show to be spectrally stable. 

 For the Kawahara model,  our results provide a significant extension in parameter space of  the current  rigorous results. In fact, our results  establish (modulo an additional technical assumption, which should be satisfied at least generically),  spectral stability for all normalized waves constructed therein - in all dimensions, for all acceptable values of the parameters. This,  combined with the results of \cite{BCSN},   provides orbital stability, for all normalized waves enjoying the   non-degeneracy property. The validity of the 
 non-degeneracy property for generic waves remains an intriguing open question. 
 
 At the same time, we verify and clarify recent numerical simulations  of the spectral stability of these solitons. For the fourth order NLS models, we improve upon recent results on spectral stability of very special, explicit solutions in the one dimensional case. Our multidimensional results for fourth order anisotropic NLS seem to be the first of its kind. Of particular interest is a new paradigm that we discover herein.  Namely,  all else being equal, the form of the second order derivatives (mixed second derivatives vs. pure Laplacian) has implications on the range of existence and stability of the normalized waves. 
\end{abstract}

\maketitle
\section{ Introduction}
We consider several dispersive models  in one and multiple space dimensions. Our main motivating  example will  be the (generalized) Kawahara equation, which is a fifth order generalized KdV equation, which allows for third order dispersion effects as well.  Namely, we set 
\begin{equation}
\label{8}
u_t+u_{xxxxx}+b u_{xxx} - (|u|^{p-1} u)_x=0, x\in\rone, t\geq 0, p>1
\end{equation}
 This is a model that appears in the study of plasma   and  capillary waves, where the third order dispersion is considered to be weak. In fact, 
 Kawahara studied the quadratic case\footnote{where the nonlinearity is in the form $(u^2)_x$, slightly different than ours}  \cite{Kaw} and he argued  that the inclusion of a fifth order derivative is necessary for capillary-gravity waves, for values of the Bond number close to the  critical one. Craig and Groves, \cite{CG} offered some further generalizations.  Kichenassamy and Olver, \cite{KO}  have studied the cases where explicit waves exist, see also Hunter-Scheurle, \cite{HS}  for existence of solitary waves.  

 Another model, which is important in the applications,  is  the non-linear  Schr\"odinger equation with fourth order dispersion. We  consider two versions of it, which will turn out to be qualitatively different, from a the  point of view of the stability of their standing waves. More precisely, 
  \begin{eqnarray}
  \label{10} 
& &   i u_t +\De^2 u +  \eps (\dpr{\vec{b}}{\nabla})^2 u - |u|^{p-1} u = 0, \ \ \ (t,x)\in \rone\times \rd,\\
& &   \label{NLS2}
   i u_t +\De^2 u + b \De  u - |u|^{p-1} u = 0, \ \ \ (t,x)\in \rone\times \rd,
  \end{eqnarray}
  where $d\geq 1$, $p>1, \eps=\pm 1$. These have  been much studied, both in the NLS as well as Klein-Gordon context, since the early 90's, see for example  \cite{A, ACN}. 
  
 For both models,  we will be interested in the existence of solitons, and the corresponding close to soliton dynamics, in particular spectral stability.  
 For the Kawahara, the relevant objects are traveling  waves, in the form $u(x,t)=\phi(x+\om t)$, where $\phi$ is dying off at infinity. These  satisfy profile equation of the form 
  \begin{equation}
  \label{15} 
  \phi'''' +b \phi'' + \om \phi- |\phi|^{p-1} \phi = 0. 
  \end{equation} 
 Similarly, standing wave  solutions in the form $u=e^{-i \om t} \phi, \om>0$, with  real-valued $\phi$ for the fourth order NLS \eqref{10} and \eqref{NLS2} 
 solve the elliptic profile equations 
  \begin{eqnarray}
  \label{20} 
& &   \De^2 \phi +\eps  (\dpr{\vec{b}}{\nabla})^2 \phi + \om \phi- |\phi|^{p-1} \phi = 0 \\
    \label{NLS22} 
& &   \De^2 \phi +b \De \phi+ \om  \phi -|\phi|^{p-1} \phi=0. 
  \end{eqnarray} 
 Constructing solutions to \eqref{15}, and more generally \eqref{20} and \eqref{NLS22},  is not straightforward task. In fact, it depends on the parameter $p$, the sign of the parameter $b$, as well as the dimension $d\geq 1$. Here, it is worth noting the works of  Albert, \cite{A} and Andrade-Cristofani-Natali, \cite{ACN} in which the authors have mostly studied the stability of some explicitly available solutions in one spatial dimension. 
 
 We proceed differently, by means of variational methods. More specifically, we  employ the  constrained minimization method, which  minimizes  total energy with respect to a fixed particle number, or $L^2$ mass. In addition to being the most physically relevant, the waves constructed this way (which we refer to henceforth as normalized waves) have good stability properties. 
 
 This brings us to the second important goal of the paper. Namely, we wish to examine the spectral stability of waves arising as solutions of  \eqref{15} and \eqref{20}. Our constructions will not yield explicit waves\footnote{although some do exist, for very specific values of the parameter $b$ and $d=1$, more on this below}. Thus, we need to decide about their stability, based on  their construction and properties.
 \subsection{Previous results}
  \subsubsection{The Kawahara model} 
   We would like to review the history of the problem for existence and stability of the traveling waves. We    concentrate  mostly on some recent results in the last twenty  years or so, which we feel  are most pertinent  to our results. We would like to emphasize  an important point, namely that since uniqueness results are generally lacking\footnote{both as minimizers of constrained variational problem and as solutions of the PDE}, it is hard to compare different results about waves obtained by different methods, as they may be different in shape and stability properties. 
   
    In \cite{IS}, \cite{K}, the authors have shown that certain waves of depression (i.e. $b<0$) are stable. In \cite{K}, the author establishes an important, Vakhitov-Kolokolov type criteria for certain waves, but it appears that it is hard to verify outside of a few explicit examples.  In \cite{BD}, Bridges and Derks, have studied a Kawahara-type  model, with more general nonlinearity.  They have employed the Evans function method to locate the point 
    spectrum (and hence the stability) of the corresponding linearizations. The results of their work are mostly computationally aided. 
    
    Levandosky, 
    \cite{L1} has studied the problem for existence of such waves via an energy - momentum type argument and concentration  compactness. Groves, \cite{G} has shown the existence    of multi-bump solitary waves for certain homogeneous nonlinearities.       
    Haragus-Lombardi-Scheel, \cite{HLS}  have considered spatially periodic solutions and solitary waves, which are asymptotic to them at infinity. They showed spectral stability for such small amplitude solutions.  We should also mention the work \cite{ACN}, in which the authors consider the orbital stability for  explicit {\it periodic} solutions of the Kawahara problem, subjected to a quadratic nonlinearity. 
    
    The paper of Angulo, \cite{Pava} gives some sufficient conditions for instability of such waves, both for the cases $b>0$ and $b<0$.  Levandosky, \cite{L} nicely summarizes the results in the literature\footnote{but he considers more general non-linearities, containing powers of derivatives as well}   and offers  rigorous analysis for stability/instability close to bifurcation points. Furthermore, his paper provides an useful, numerically aided,  classification of solitary waves of the Kawahara model, based on the type of non-linearity (i.e. the power $p$) and the parameters of the problem $b, \om$.  The exhaustive tables on p. 164, \cite{L} provided a good starting point for our investigation. We should mention that the waves considered  in \cite{L} are   produced as  the constrained minimizers of the following variational problem
  \begin{equation}
  \label{Lev}
  \left\{
  \begin{array}{l}
  J_\om[u]=\int_{\rd} |\De u(x)|^2 - b |\nabla u(x)|^2+ \om u^2(x) dx \to \min \\
  \int_{\rd} |u(x)|^{p+1} dx=1 
  \end{array}
  \right.
  \end{equation}
  {\it We take different approach below,  by constructing the normalized waves. These are the waves that  precisely minimize energy, when one constrains the $L^2$ norm, see Section \ref{sec:3.1}. }

  An important point we would like to make however  is that the  procedure outlined by \eqref{Lev} provides waves for a considerably wider range of $p$,   than the ones produced in Section \ref{sec:3.1}. Namely, the minimizers of \eqref{Lev} exist for $p\in (1, p_{\max})$, with 
   $
 p_{\max}(d)=\left\{ 
 \begin{array}{cc}
 \infty & d=1,2,3,4 \\
 1+ \f{8}{d-4} & d\geq 5
 \end{array}
 \right.
 $
whereas, the normalized waves constructed herein are only available for $p\in (1, 1+\f{8}{d})$. 
    
\subsubsection{Fourth order NLS model}      The fourth order Schr\"odinger equation was introduced in  \cite{KS2}, \cite{K1}, where it plays an important role in modeling the propagation of intense laser beams in a bulk medium with Kerr nonlinearity. Moreover, the equation was also used in nonlinear fiber optics and the theory of optical solitons in gyro tropic media. The problem for the existence and the stability of the waves arising in \eqref{20} has been the subject of investigations of a few recent works, the results of which we summarize below. 

For the case of $d=1, p=3$  (and in fact only for the special value of $\eps=-1, b=1$ and $\om=\f{4}{25}$), the elliptic problem  \eqref{15} (or equivalently \eqref{20}) 
was considered by Albert, \cite{A} in relation to soliton solutions to related approximate water wave models. The explicit soliton,  $\phi_0(x) = \sqrt{\f{3}{10}} sech^2\left(\f{x}{\sqrt{20}}\right)$,  was studied in detail in \cite{A}.  Important properties of the corresponding  linearized operators were established.  These properties allowed Natali and Pastor, \cite{NA} to establish the orbital stability of this wave, see also \cite{FSS} for alternative approach and extensions to Klein-Gordon solitons.   One of the central difficulties that the authors faced is that this solution is only available explicitly for an isolated value of\footnote{ which precludes one from differentiating with respect to the parameter $\om$ as is customary in these types of arguments}  $\om=\f{4}{25}$.  
Additionally, the problem for stability of the equation \eqref{10} in $d=1$, $\eps=-1, b=1$ and general $p$ were addressed in the works    \cite{K6} and \cite{KS1}. The numerically generated waves were shown    to exists for every $p>1$, but they are stable only  for $p\in (1,5)$. Further (mostly numerical) investigations 
regarding this model  are available in the papers \cite{KS2}, \cite{K1}. 

Finally, it is important to discuss the recent work \cite{BCSN}, as it has significant overlap with ours. In it, the authors have studied \eqref{NLS2} in great detail, including the stability of the  waves. They have constructed the waves in a similar manner, in fact the existence part of our Theorem \ref{theo:NLS2} is similar in nature\footnote{although more details on radial symmetry, the zero set and exponential decay   of the waves are derived as well}.  In addition, they   discuss some cases, in which they can show the important non-degeneracy property, that is  $Ker[\cl_+]=span[\nabla \phi]$.  This is rigorously verified  in two cases only: 
\begin{itemize}
		\item the one dimensional case, $d=1$, with $b<0$, $b^2>4\om$. 
	\item for any dimension $d\geq 2$,  but with $b<0$ and $|b|$ sufficiently  large,
\end{itemize} 
{\it Concerning stability of the waves, the authors of \cite{BCSN}  do not actually establish stability for any given example.}  On the other hand, they show that orbital stability holds, once one can verify non-degeneracy and the index condition $\dpr{\cl_+^{-1}  \phi}{\phi}<0$.  The concrete 
	details of these results are provided in  \cite{BCSN}, although this is a more general theorem, see for example Theorem 5.2.11, \cite{KP}.  The non-degeneracy was already discussed, while the verification of  $\dpr{\cl_+^{-1}  \phi}{\phi}<0$ is left as an open problem in \cite{BCSN}. This last condition however is essentially equivalent, modulo some easy to establish technical assumptions,  to the spectral stability, see Corollary \ref{stability} below. 

{\it  In this work, we actually do show $\dpr{\cl_+^{-1}  \phi_\la}{\phi_\la}\leq 0$  for all waves produced in Theorems \ref{theo:Kaw}, \ref{theo:NLS}, \ref{theo:NLS2}, thus answering the open problem  in \cite{BCSN}. With the exception  of  the case $\dpr{\cl_+^{-1}  \phi_\la}{\phi_\la}=0$ (which is  a non-degeneracy condition of sort, that we cannot rule out),  our results provide rigorously for spectral stability for all waves constructed therein - in all dimensions $d\geq 1$, 
	for all allowed values of $b: d=1, b\in \rone$ and $d\geq 2, b<0$. This, in combination with the results of \cite{BCSN},   shows  orbital stability, for all normalized waves enjoying the  non-degeneracy property of the wave as well as the property $\dpr{\cl_+^{-1} \vp_\la}{\vp_\la}\neq 0$.}

 \subsection{Main results: Kawahara waves} 
 It is easy to informally summarize our results - all  normalized waves, whenever they exist, turn out to be spectrally stable. This is an interesting paradigm, which is currently under investigation in a variety of models. Our hope is that the approach here will shed further light on this interesting phenomena in a much more general setting. 
 As we have alluded to above, our focus will be the Kawahara problem, \eqref{8}, 
 for both positive and negative values of $b$.   
\subsubsection{Kawahara waves: Existence} 
 In order to construct solutions to the elliptic problem \eqref{15}, we shall work with the following variational problem 
 \begin{equation}
 \label{70}
 \left\{ 
 \begin{array}{l}
 I[\phi]=\frac{1}{2}\int_{\rone}[|\phi''(x)|^2-b|\phi'(x|^{2}]dx-\frac{1}{p+1}\int_{\rone}|\phi(x)|^{p+1}dx\to \min \\
 \int_\rone \phi^2(x) dx=\la,
 \end{array}
 \right.
 \end{equation}
 where one could take $\phi$ in the Schwartz class,  in order to make $I[\phi]$ meaningful. 
 Introduce the scalar function 
 $$
 m_b(\lambda)=\inf_{\phi\in H^2(\rone),\left\lVert \phi \right\rVert_2^2=\lambda}I[\phi],
 $$
which  plays a prominent form in the subsequent arguments. Let us emphasize that it is not {\it a priori} clear whether the problem \eqref{70} is well-posed (i.e. $m_b(\la)>-\infty$) for all $\la$. We have the following existence result.

 \begin{theorem}(Existence of the normalized Kawahara traveling waves) \\ 
 \label{theo:Kaw}
 Let $p \in (1,9),\la>0, b\in \rone$ satisfy one of the following 
 \begin{enumerate}
 \item $1<p<5, \la>0$
 \item For  $5\leq p<9$ and all sufficiently large\footnote{Here, for all given $p\in [5,9)$, for both $b>0, b<0$, 
  there is a specific value$\la_{b,p}$ and we assume that  $\la>\la_{b,p}$}   $\la$ 
 \end{enumerate}
 Then, the constrained minimization problem \eqref{70} has a solution, $\phi_\la\in H^4(\rone): \|\phi\|_{L^2}^2=\la$ and  $\om=\om(b,\la, \phi)$. 
Moreover, $\phi_\la$ satisfies the Euler-Lagrange equation \eqref{15}   in a classical  sense. 
 We call such solutions $\phi_\la$ normalized waves. 
 \end{theorem}{\bf Remark:} 
   The Lagrange multiplier $\om$ may depend on the normalized wave $\phi$. In particular, we can  not rule out the existence of two constrained minimizers of \eqref{70}, $\phi_\la, \tilde{\phi}_\la$, with $\om(\la, \phi_\la)\neq \om(\la, \tilde{\phi}_\la)$. This is of course related to the uniqueness problem for the minimizers of \eqref{70}  (and it should be a much simpler one), but it is open at the moment. 
 
  \subsubsection{Kawahara waves: stability} 
  
  We now discuss our  results concerning the stability of the   waves produced in Theorem \ref{theo:Kaw} - we employ the standard definition of spectral stability, see Definition \ref{defi:lin} in Section \ref{sec:2.1} below.  
  Before we give the formal statements, we need to state an important  property of the waves $\phi$ constructed in Theorem \ref{theo:Kaw}. Namely, upon introducing the self-adjoint  linearized operator 
  	$$
  	\cl_+=\p_x^4+b \p_x^2 +\om_{b,\la}- p |\phi_\la|^{p-1},
  	$$
  {\it we say that $\phi_\la$ is weakly non-degenerate, if $\phi_\la\perp Ker[\cl_+]$}. In  particular, $\cl_+^{-1} \phi_\la$ is well-defined. 	   
  \begin{theorem}
  	\label{theo:Kawstab} 
  	Let $\la>0$ and $p$ satisfy the requirements of Theorem \ref{theo:Kaw}, and $\phi_\la$ is any minimizer constructed therein. Then,  $\phi_\la$ is weakly non-degenerate. If in addition, the condition  $\dpr{\cl_+^{-1} \phi_\la}{\phi_\la}\neq 0$ is satisfied, then the wave $\phi_\la$ is spectrally stable, as a solution to the Kawahara problem \eqref{8}, in the sense of  Definition \ref{defi:lin} below. 
  \end{theorem}
  {\bf Remarks:} 
  \begin{itemize}
   \item The condition $\dpr{\cl_+^{-1} \phi_\la}{\phi_\la}\neq 0$ appears frequently as a non-degeneracy condition in the literature, \cite{KP}. It is worth noting that such a condition has a clear physical spectral meaning, namely that the  eigenvalue at zero for $\p_x \cl_+$, generated by the translational  invariance,  has an associated  Jordan cell of order exactly two. Physically, such an eigenvalue is expected to be of algebraic multiplicity exactly two and geometric multiplicity one, as this is the only invariance in the system, so this must hold generically. We do not have a rigorous proof of this fact at the moment.
  	\item The results of Theorem \ref{theo:Kawstab} present rigorous sufficient conditions for stability of traveling waves in much wider range than previously available.  In fact, our results confirm\footnote{With the usual caveat, that since there is no uniqueness, it is possible that the waves considered in \cite{L} are different than ours!}  the available numerical simulations by Levandosky,  \cite{L}. For example, it is quite obvious that the bifurcation point   is at\footnote{corresponds to the case $p=6$ in  the notations of \cite{L}} $p=5$.  More precisely, for powers $p<5$ all waves are stable\footnote{except at $p=4$ ($p=5$ in the notations of \cite{L}) - for a small region in the 
  		parameter space, an instability is observed numerically.  This must be a  fluke of the computations in \cite{L}, because as we see from Theorem \ref{theo:Kaw}, the stable region is up to $p< 5$}, while for $p> 5$, some unstable waves start to appear (which are of course not  normalized). For $p\geq 9$, Levandosky observed a   very small set of stable waves, again none of them normalized, but rather generated as minimizers of \eqref{Lev}. 
  	
  	\item The Cauchy problem for the particular version of the Kawahara problem \eqref{8} considered herein,  has not been studied methodically, to the best of our knowledge. Based on the results of the standard NLS though, one might conjecture that the problem is globally well-posed for all values $1<p<9$. An important related issue is the conservation of Hamiltonian, momentum and $L^2$ mass along the evolution of solutions emanating from sufficiently nice data. 
  	\item In the presence of satisfactory well-posedness theory, as outlined above, nonlinear (or strong orbital) stability of the wave $\phi(x+\om t)$ follows  from our arguments, once one can establish that the linearized operator 
  	$\cl_+$ has one dimensional kernel, namely $Ker[\cl_+]=span [\phi']$. This is in essence standard, but it does not follow directly within  the Grillakis-Shatah-Strauss  formalism, \cite{GSS}, since this approach would require the smoothness of the mapping $\la\to \phi_\la$, which is currently unknown.    In particular, we refer to a method pioneered by T. B. Benjamin in \cite{Ben}, for the stability of the KdV waves, which has since been refined and improved by other authors.  On the other hand, we refer to the arguments for the NLS case to \cite{BCSN}.  	 
  	
  	\item The non-degeneracy  $Ker[\cl_+]=span [\phi']$ appears to be a hard problem in the theory. An easier version would be to establish such a non-degeneracy of the kernel, if $\phi$ is a minimizer of \eqref{70}.  A harder problem would be to do so, knowing that $\phi$ is just a solution to the PDE \eqref{15}. In both cases, the non-degeneracy is directly relevant  to the uniqueness of the ground state, which is even harder open  problem in the area.  See \cite{FL} for discussion about these and related issues. 
  \end{itemize}  
    \subsection{Main results: fourth order NLS waves} 
   We start with the existence result for the models. 
   \subsubsection{Existence of normalized waves for  fourth order NLS models} 
 Before we state the results for the  fourth order NLS models, we need to make an obvious reduction of the equation \eqref{10}. Namely, picking a  matrix $A\in SU(n)$, so that $\vec{b}=|\vec{b}| A \vec{e_1}$, we can clearly reduce matters (both the existence of the solutions of the profile equation \eqref{20} and its stability analysis), by the transformation $\hat{u}(\xi)\to \hat{u}(A^* \xi)$,  to the following problem:  
 \begin{equation}
 \label{712}
 i u_t+\De^2 u+\eps |b|^2 \p_{x_1}^2 u - |u|^{p-1} u=0
 \end{equation}
 and its associated elliptic profile equation 
 \begin{equation}
 \label{715}
 \De^2 \phi +\eps |b|^2 \p_{x_1}^2 \phi+\om \phi  - |\phi|^{p-1} \phi=0. 
 \end{equation}
 That is, the existence of solutions to \eqref{715} is equivalent to the existence of solutions to \eqref{20} (under the appropriate transformation) and their stability is equivalent to the stability of their  counterparts. Thus, it suffices to discuss the fourth order NLS problem \eqref{712}, with its solitons satisfying  \eqref{715}. 
 Our variational setup in the anisotropic case is as follows 
 \begin{equation}
 \label{700}
 \left\{ 
 \begin{array}{l}
 I[\phi]=\frac{1}{2}\int_{\rd}[|\De \phi (x)|^2-\eps |\vec{b}|^2 |\p_{x_1}\phi(x)|^2]dx-\frac{1}{p+1}\int_{\rd}|\phi(x)|^{p+1}dx\to \min \\
 \int_{\rd} \phi^2(x) dx=\la,
 \end{array}
 \right.
 \end{equation}

  \begin{theorem}(Stability of the normalized   waves for the fourth order  NLS: mixed derivatives) \\ 
 \label{theo:NLS}
 Let $d\geq 1, \eps=- 1$. Let  $p  \in (1, 1+\f{8}{d})$, $\la>0$ and  
   \begin{enumerate}
 \item $1<p<1+\f{8}{d+1}, \la>0$
 \item If $1+\f{8}{d+1} \leq p< 1+\f{8}{d}$, assume a sufficiently large    $\la$.
 \end{enumerate}
Then, there exists $\phi\in H^4(\rd)\cap L^{p+1}(\rd)$ satisfying \eqref{715}, with an appropriate $\om=\om(\la, \phi)$.  

  The wave $\phi_\la$ is constructed as constrained minimizer of \eqref{700}, with 
  $\|\phi_\la\|_{L^2}^2=\la$.  Assuming in addition the condition $\dpr{\cl_+^{-1} \phi_\la}{\phi_\la}\neq 0$, then $e^{ - i \om_\la t} \phi_\la(x)$ is a spectrally stable solution of \eqref{712},  in the sense of  Definition \ref{defi:lin} below. 
  \end{theorem}
  {\bf Remark:} The case $\eps=1$, in the higher dimensions $d\geq 2$, while undoubtedly interesting in the applications,  is much more subtle, and it cannot be analyzed with the methods of this paper.  We will address some aspects of it in a forthcoming publication \cite{KST}.

Despite the obvious similarities with \eqref{20}, the fourth order NLS with pure Laplacian, \eqref{NLS2} and its associated profile equation \eqref{NLS22},   turn out  quite different -  even at the level of the existence of the waves and their stability. We introduce the relevant variational problem
\begin{equation}
\label{701}
\left\{ 
\begin{array}{l}
I[\phi]=\frac{1}{2}\int_{\rd}[|\De \phi (x)|^2-
b  |\nabla \phi(x)|^2]dx-\frac{1}{p+1}\int_{\rd}|\phi(x)|^{p+1}dx\to \min \\
\int_{\rd} \phi^2(x) dx=\la,
\end{array}
\right.
\end{equation}
 \begin{theorem}(Stability of the normalized  waves for the fourth order  NLS: pure Laplacian case) \\ 
 \label{theo:NLS2}
 Let $d\geq 1$, $b<0$. Let  $p  \in (1, 1+\f{8}{d})$,  $\la>0$ and  
   \begin{enumerate}
 \item $1<p<1+\f{4}{d}, \la>0$
 \item If $1+\f{4}{d} \leq p< 1+\f{8}{d}$, assume a sufficiently large    $\la$.
 \end{enumerate}
Then, there exists a normalized wave $\phi_\la\in H^4(\rd)\cap L^{p+1}(\rd):\|\phi_\la\|^2=\la$,  satisfying \eqref{NLS22}, with an appropriate $\om=\om(\la, \phi)$. 
 The soliton $e^{ - i \om_\la t} \phi_\la(x)$ is a spectrally stable solution of \eqref{NLS2}, under  the additional condition $\dpr{\cl_+^{-1} \phi_\la}{\phi_\la}\neq 0$,  in the sense of  Definition \ref{defi:lin}. 
  \end{theorem}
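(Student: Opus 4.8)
The plan is to build $\phi_\la$ by constrained minimization of the biharmonic energy and then to turn the monotonicity of $\la\mapsto\om_\la$ into the index condition $\dpr{\cl_+^{-1}\phi_\la}{\phi_\la}<0$, which by the stability criterion of Section~\ref{sec:2.1} (Corollary \ref{stability}) is equivalent to spectral stability. Set $E[\phi]=\f{1}{2}\int_{\rd}\left(|\De\phi|^2-b|\nabla\phi|^2\right)dx-\f{1}{p+1}\int_{\rd}|\phi|^{p+1}dx$ and $m_b(\la)=\inf\{E[\phi]:\ \phi\in H^2(\rd),\ \|\phi\|_{L^2}^2=\la\}$, in analogy with \eqref{70}. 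First I would show $-\infty<m_b(\la)<0$. Boundedness below uses the biharmonic Gagliardo--Nirenberg inequality $\|\phi\|_{L^{p+1}}\lesssim\|\De\phi\|_{L^2}^{\theta}\|\phi\|_{L^2}^{1-\theta}$ with $\theta=\f{d(p-1)}{4(p+1)}$, whose power $(p+1)\theta=\f{d(p-1)}{4}$ is $<2$ precisely when $p<1+\f{8}{d}$, together with $\|\nabla\phi\|_{L^2}^2\le\|\De\phi\|_{L^2}\|\phi\|_{L^2}$ to absorb the possibly bad-signed term $-b|\nabla\phi|^2$. For strict negativity I would use the mass-preserving dilation $\phi_t(x)=t^{d/2}\phi(tx)$, for which $E[\phi_t]=\f{t^4}{2}\|\De\phi\|_{L^2}^2-\f{bt^2}{2}\|\nabla\phi\|_{L^2}^2-\f{t^{d(p-1)/2}}{p+1}\|\phi\|_{L^{p+1}}^{p+1}$: as $t\to0^+$ the nonlinear term (of order $t^{d(p-1)/2}$) dominates the positive quadratic part as soon as $d(p-1)/2<2$, i.e. $p<1+\f{4}{d}$ (this is the delicate case $b<0$); this is case (1). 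For $p\in[1+\f4d,1+\f8d)$ one instead dilates a fixed profile and lets $\la\to\infty$, so that the nonlinear term ($\sim\la^{(p+1)/2}$) beats the quadratic terms ($\sim\la$); this is case (2). The pure Laplacian $\De^2+b\De$ is isotropic, so — unlike the mixed operator in \eqref{715} — it admits no helpful anisotropic rescaling, and this rigidity is why the first threshold here is $1+\f4d$ rather than $1+\f{8}{d+1}$, a manifestation of the paradigm noted in the introduction.

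Next I would run Lions' concentration compactness: coercivity makes minimizing sequences bounded in $H^2(\rd)$; $m_b(\la)<0$ rules out vanishing, and the strict subadditivity $m_b(\la)<m_b(\mu)+m_b(\la-\mu)$ for $0<\mu<\la$ (a consequence of $m_b<0$ and a scaling comparison) rules out dichotomy, so a minimizing sequence converges, up to translation, in $H^2$ to a minimizer $\phi_\la$. An elliptic bootstrap via the resolvent of $\De^2+b\De+\om$ gives $\phi_\la\in H^4(\rd)\cap L^{p+1}(\rd)$, smooth and exponentially decaying, satisfying \eqref{NLS22} with a Lagrange multiplier $\om=\om(\la,\phi_\la)$. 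The bound \eqref{342} I would obtain from the Pohozaev and Nehari identities for $\phi_\la$ together with nonnegativity of $\cl_-:=\De^2+b\De+\om-|\phi_\la|^{p-1}$: since $|\phi_\la|^{p-1}$ is a relatively compact perturbation, $\sigma_{\mathrm{ess}}(\cl_-)=[\om+\min_\xi(|\xi|^4-b|\xi|^2),\infty)$, and $\cl_-\ge0$ forces its bottom to be $\ge0$, i.e. $\om\ge b^2/4$ if $b>0$ and $\om\ge0$ if $b<0$; strictness follows because equality would produce a Fourier resonance on $\{|\xi|^2=b/2\}$ incompatible with $\phi_\la$ being a decaying $L^2$ solution. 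Finally, concavity of $m_b$ (proved exactly as in Theorem \ref{theo:m}) makes $m_b'$ nonincreasing with $\om_\la=-2m_b'(\la)$, so $\la\mapsto\om_\la$ is nondecreasing; the strict version, with a quantitative bound of the type \eqref{l:11}, upgrades this to strictly increasing.

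For spectral stability I would linearize \eqref{NLS2} about $e^{-i\om_\la t}\phi_\la(x)$; the relevant self-adjoint operators are $\cl_+=\De^2+b\De+\om_\la-p|\phi_\la|^{p-1}$ and $\cl_-$ as above. By \eqref{342} the essential spectrum of each lies in $(0,\infty)$, so negative spectrum is finite and discrete. Constrained minimality supplies everything Corollary \ref{stability} needs: purely imaginary admissible perturbations give $\cl_-\ge0$, and $\cl_-\phi_\la=0$ is just \eqref{NLS22}; real perturbations orthogonal to $\phi_\la$ give $\cl_+\ge0$ on $\{\phi_\la\}^\perp$, while $\dpr{\cl_+\phi_\la}{\phi_\la}=(1-p)\|\phi_\la\|_{L^{p+1}}^{p+1}<0$, hence $n(\cl_+)=1$; differentiating \eqref{NLS22} in $x_j$ gives $\nabla\phi_\la\in\ker\cl_+$. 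The criterion then reduces spectral stability to the scalar inequality $\dpr{\cl_+^{-1}\phi_\la}{\phi_\la}<0$, which is meaningful since $\phi_\la\perp\ker\cl_+$. To check it, formally differentiating \eqref{NLS22} in $\om$ gives $\cl_+\partial_\om\phi_\om=-\phi_\om$, so $\f{d}{d\om}\|\phi_\om\|_{L^2}^2=-2\dpr{\cl_+^{-1}\phi_\om}{\phi_\om}$; thus $\dpr{\cl_+^{-1}\phi_\la}{\phi_\la}<0$ is equivalent to $\f{d\la}{d\om}>0$, which holds because $\la\mapsto\om_\la$ is strictly increasing.

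I expect the main obstacle to be making this last step rigorous: the map $\la\mapsto\phi_\la$ (equivalently $\om\mapsto\phi_\om$) is not known to be $C^1$ — precisely why Theorem \ref{theo:m} and Proposition \ref{prop:limitw} are stated only in an almost-everywhere/limit-wave form — so $\partial_\om\phi_\om$ need not literally exist. I would get around this by giving $\dpr{\cl_+^{-1}\phi_\la}{\phi_\la}$ a variational meaning: it controls the infimum of $\dpr{\cl_+\psi}{\psi}$ over the affine hyperplane $\dpr{\psi}{\phi_\la}=1$, which (because $n(\cl_+)=1$ and $\cl_+\ge0$ on $\{\phi_\la\}^\perp$) is finite if and only if $\dpr{\cl_+^{-1}\phi_\la}{\phi_\la}<0$; I would then compare that infimum with difference quotients of $m_b$ at nearby masses, using the limit waves at $\la$ to supply the needed $H^2$ compactness and the concavity/strict monotonicity from Theorem \ref{theo:m} to fix the sign. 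A secondary, genuinely fourth-order difficulty is the absence of a maximum principle: the sign of $\phi_\la$ and the simplicity $\ker\cl_-=\mathrm{span}(\phi_\la)$ used in the index count are not automatic and must be read off from the variational characterization rather than from a Perron--Frobenius argument.
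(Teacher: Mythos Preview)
Your existence argument tracks the paper's closely: the same constrained minimization, the same GNS control for boundedness below, the same isotropic dilation $\phi_t(x)=t^{d/2}\phi(tx)$ to get $m_b(\la)<0$, and the same concentration--compactness scheme. Your explanation of why the threshold is $1+\tfrac{4}{d}$ rather than $1+\tfrac{8}{d+1}$ is exactly right and matches Lemma~\ref{le:133}. For the range \eqref{342} you use $\cl_-\ge 0$ together with $\sigma_{\mathrm{ess}}(\cl_-)$, whereas the paper instead computes $\lim_{\la\to 0^+} m_b(\la)/\la$ directly (Lemma~\ref{le:w}); both work.

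The real divergence is in the stability step. You go the Vakhitov--Kolokolov/GSS route: monotonicity of $\la\mapsto\om_\la$ should yield $\dpr{\cl_+^{-1}\phi_\la}{\phi_\la}<0$ via $\cl_+\partial_\om\phi_\om=-\phi_\om$, and you correctly flag that the missing $C^1$-regularity of $\la\mapsto\phi_\la$ is the obstacle, proposing to repair it with limit waves and difference quotients of $m_b$. The paper sidesteps this entirely. Its Lemma~\ref{le:93} is a short abstract argument: if a self-adjoint $\ch$ satisfies $\ch|_{\{\xi_0\}^\perp}\ge 0$, $\xi_0\perp\mathrm{Ker}\,\ch$, and $\dpr{\ch\xi_0}{\xi_0}\le 0$, then $\dpr{\ch^{-1}\xi_0}{\xi_0}<0$. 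Applied with $\ch=\cl_+$ and $\xi_0=\phi_\la$, all three hypotheses come for free from the constrained minimization (the second via the weak non-degeneracy Lemma~\ref{le:wnon}, which is the two-line computation you allude to but do not spell out: any $\Psi\in\mathrm{Ker}\,\cl_+$ satisfies $0\le\dpr{\cl_+(\Psi-c\phi_\la)}{\Psi-c\phi_\la}=c^2\dpr{\cl_+\phi_\la}{\phi_\la}<0$ unless $c=\dpr{\Psi}{\phi_\la}/\|\phi_\la\|^2=0$). So the index condition follows with no appeal to $\om_\la$, its monotonicity, or any smoothness in $\la$. Your workaround might be made rigorous, but it is considerably heavier than this two-paragraph lemma; and in fact the variational characterization you sketch --- that $\dpr{\cl_+^{-1}\phi_\la}{\phi_\la}$ governs the infimum of $\dpr{\cl_+\psi}{\psi}$ on $\{\dpr{\psi}{\phi_\la}=1\}$ --- is essentially the content of Lemma~\ref{le:93} once you plug in the minimizer. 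The upshot: in the paper, the monotonicity of $\om_\la$ (Theorem~\ref{theo:m}) is an independent structural result, not an ingredient of the stability proof.
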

 {\bf Remarks:}  
 \begin{itemize}
 	
  \item  The results extend the  stability results of Albert, \cite{A} for the one dimensional cubic case $p=3$.
  
  \item The results here also extend the NLS related  results of \cite{FSS} (namely, stability for $p<1+\f{8}{d}$ and  instability otherwise), which apply to the case $b=0$. 
 \item Both results, Theorem \ref{theo:NLS} and \ref{theo:NLS2} of course coincide for $d=1$, but are different for $d\geq 2$. We do not have a good physical explanation as to  why the range of existence and stability of standing waves for the models \eqref{712} vis a vis \eqref{NLS2} differ. In particular, the mixed derivative model, \eqref{712} seems to support all stable normalized waves in the   wider range $p\in (1, 1+\f{8}{d+1}), \la>0$, compared to $p\in (1, 1+\f{4}{d})$ for  \eqref{NLS2}. This topic clearly merits further investigations. 
 \item The cases $b>0, d\geq 2$ will be   analyzed in a forthcoming publication, \cite{KST}. 
 \end{itemize}
  The rest of the paper is organized as follows. In Section \ref{prelim},  we show that distributional solutions of the elliptic problems  are in fact strong solutions. We also set up the relevant eigenvalue problems, and in regards to that, we review the relevant instability index counting theories and some useful corollaries. Finally, we present the Pohozaev identities, which imply some necessary conditions for the existence of the waves.    We also note that better necessary conditions (which are closer to what we conjecture are the optimal ones) are possible,  under a natural  spectral condition.  
  In Section \ref{sec:3}, we develop the existence theory in the one dimensional problem - this already contains all the difficulties, that one encounters in the higher dimensional situation as well. In particular, we discuss the well-posedness of the constrained minimization problem, the compensated compactness step, as well as the derivation of the Euler-Lagrange equation and various spectral properties of the linearized operators, which are useful in the sequel. 
  In Section \ref{sec:4}, we indicate the main steps in the variational construction for the waves in the higher dimensional case. In Section \ref{sec:5}, we provide a general framework for spectral stability, based on the index counting formula, which is easily applicable in our setting.  
  \section{Preliminaries} 
 \label{prelim}
 We first introduce some notations and standard inequalities.  We will frequently use the notation $f \lesssim g $, when $f,g$ are positive quantities/functions and there is a constant $C$, independent on the parameters so that $f\leq C g$. 
 \subsection{Function spaces and GNS inequalities}  The $L^p, 1\leq p<\infty$ spaces are defined via 
 $$
 \|f\|_{L^p}=\left(\int |f(x)|^p dx\right)^{1/p}, 
 $$
 For integer $k$,  the classical Sobolev spaces $W^{k.p}, 1\leq p<\infty$ are taken to be the closure of Schwartz functions in the norm 
 $\|f\|_{W^{k,p}}=\|f\|_{L^p}+\sum_{|\al|=k} \|\p^{\al} f\|_{L^p}$. 
 
 Next, we need some Fourier analysis basics. 
Fourier transform and its inverse are defined via 
$$
\hat{f}(\xi)=\int_{\rd} f(x) e^{-2\pi i x\cdot \xi} dx; \ \ f(x)=\int_{\rd} \hat{f}(\xi) e^{2\pi i x\cdot \xi} d\xi
$$
 Recall the sharp Sobolev inequality $\|f\|_{L^q(\rd)}\leq C_{s,p} \|f\|_{W^{s,p}(\rd)}$, where $1<p<q<\infty$ and  $s=n\left(\f{1}{p}-\f{1}{q}\right)$. Note that for non-integer values of $s$, the norm on the right-hand side is defined via 
 $$
 \|f\|_{W^{s,p}}:=\|(1-\De)^{s/2} f\|_{L^p},
 $$
 where  $\widehat{(1-\De)^{a} g}(\xi)=(1+4\pi^2 |\xi|^2)^a \hat{g}(\xi)$. 
 
 In addition, we shall make use of the Gagliardo-Nirenberg-Sobolev (GNS) inequality, which combines the Sobolev estimate with the well-known log-convexity of the  complex interpolation functor 
 $\|f\|_{[X_0, X_1]_\theta}\leq \|f\|_{X_0}^{1-\theta} \|f\|_{X_1}^{\theta}$.  For example, the following estimate proves useful in the sequel
 \begin{equation}
 \label{71} 
 \|u\|_{L^q(\rd)}\leq C_{q,d} \|\De u\|_{L^2}^{\f{d}{2}(\f{1}{2}-\f{1}{q})}  \|u\|_{L^2}^{1-\f{d}{2}(\f{1}{2}-\f{1}{q})},
 \end{equation}
 whenever $q\in (2,\infty)$, for $d=1,2,3,4$ and $2<q<\f{2d}{d-4}, d\geq 5$. 
 
 We record the formula for the Green function of $(-\De+1)^{-1}$, that is $\hat{Q}(\xi)=(1+4\pi ^2 |\xi|^2)^{-1}$ (see  \cite{G},  p. 418)
 \begin{equation}
 \label{g}
 Q(x)= (2\sqrt{\pi})^{-n} \int_0^\infty e^{-(t+\f{|x|^2}{4t})} \f{dt}{t^{n/2}}.
 \end{equation} 
 Note that $Q>0$, radial and radially decreasing.  Also, 
 $\|Q\|_{L^1(\rn)}=\int_{\rn} Q(x) dx=\hat{Q}(0)=1$, but note that $Q(0)=+\infty$ for $n\geq 2$.  In fact, 
 there are the following classical estimates for it,  p. 418, \cite{G},
 \begin{eqnarray}
 \label{loukas:10} 
 	|Q(x)|\leq   C e^{-|x|},\ \  |x|>1 \\
 	\label{loukas:20} 
 	Q(x)\sim \left\{ 
 	\begin{array}{cc}
 	|x|^{2-n}+O(1) & n\geq 3 \\
 	\ln(\f{1}{|x|})+O(1) & n=2
 	\end{array}
 	\right.\ \ |x|<1.
 \end{eqnarray}
In particular, $Q\in L^q(\rn)$, whenever $q<\f{n}{n-2}$ (or $q<\infty$, when $n=2$).

 \subsection{Distributional  vs strong solutions of the Euler-Lagrange equation} 
 \begin{definition}
 	\label{defi:12}
 	We say that $g\in H^2(\rd)\cap L^{p+1}(\rd)$ is a distributional solution of the equation
 	\begin{equation}
 	\label{515}
 	\De^2 g + b \De  g +\om g- |g|^{p-1} g=0, x\in \rd
 	\end{equation}
 	if   the following relation holds for every $h\in H^2(\rd)\cap L^\infty(\rd)$: 
 	$$
 	\dpr{\De g}{\De h}+  \dpr{b \De g+\om g}{h} - \dpr{g|^{p-1} g}{h}=0.
 	$$
 \end{definition}
 \begin{proposition}
 	\label{prop:nm}
 	Let $p\in (1, 1+\f{8}{d})$ and $b,\om$ be so that $b^2-4\om<0$ or $b^2-4\om>0, \om>0, b<0$. Then, any  weak solution $g$ of \eqref{515} is in fact $g\in H^4(\rd)\cap L^\infty(\rd)\cap L^{1+\eps}(\rd)$ for any $\eps>0$.  In particular,  the weak solutions of \eqref{515}   in fact satisfy \eqref{515} as $L^2$ functions.
 \end{proposition}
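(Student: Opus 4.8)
The plan is to bootstrap the integrability of a weak solution $g$, using the Green's operator of the fourth order elliptic operator as a smoothing device, and then read off membership in $H^4$, in $L^\infty$, and the tail estimate. With the Fourier convention of the excerpt one has $\widehat{\De f}=-4\pi^2|\xi|^2\hat f$, so $\De^2+b\De+\om$ has symbol $m(\xi)=16\pi^4|\xi|^4-4\pi^2b|\xi|^2+\om$. Writing $t=4\pi^2|\xi|^2\ge 0$, the two hypotheses ($b^2-4\om<0$, or $b^2-4\om>0$ together with $\om>0,\ b<0$) guarantee that $\inf_{t\ge 0}(t^2-bt+\om)>0$, equivalently the ellipticity bound $m(\xi)\ge c_{b,\om}(1+|\xi|^4)$. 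Hence $T:=(\De^2+b\De+\om)^{-1}$ is a genuine Fourier multiplier: by Plancherel it is bounded $L^2(\rd)\to H^4(\rd)$, and since $(1+|\xi|^4)/m(\xi)$ is a H\"ormander--Mikhlin symbol, $T:L^q(\rd)\to W^{4,q}(\rd)$ for every $q\in(1,\infty)$. Factoring $m(\xi)=16\pi^4(|\xi|^2-\mu_1)(|\xi|^2-\mu_2)$ with $\mu_1,\mu_2\notin[0,\infty)$ and decomposing $m^{-1}$ by partial fractions into two shifted-Laplacian symbols $(|\xi|^2-\mu_j)^{-1}$, the Bessel-kernel bounds \eqref{g}, \eqref{loukas:10}, \eqref{loukas:20} show that the convolution kernel $G$, determined by $\hat G=1/m$, obeys $|G(x)|\lesssim e^{-c|x|}$ for $|x|\ge 1$ and $|G(x)|\lesssim|x|^{4-d}$ for $|x|\le 1$ (with a logarithmic correction at $d=4$, and $G$ bounded when $d\le 3$); in particular $G\in L^1(\rd)$, and more generally $G\in L^r(\rd)$ for all $r\in[1,\tfrac d{d-4})$ (all $r<\infty$ when $d\le 4$). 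Finally, comparing Fourier transforms in the weak formulation of \eqref{515} gives $m\,\hat g=\widehat{|g|^{p-1}g}$, hence $g=TF=G*F$ with $F:=|g|^{p-1}g$, which is the identity I will iterate.

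\emph{From $H^2$ up to $L^\infty$.} By \eqref{71} (or plain Sobolev), $H^2(\rd)\hookrightarrow L^\infty$ for $d\le 3$, $H^2(\rd)\hookrightarrow L^q$ for all $q<\infty$ when $d=4$, and $H^2(\rd)\hookrightarrow L^{q_0}$ with $q_0=\tfrac{2d}{d-4}$ when $d\ge 5$; so only $d\ge 4$ requires work. If $g\in L^q(\rd)$ then $F\in L^{q/p}(\rd)$ with $\|F\|_{q/p}=\|g\|_q^p$, and Young's inequality with $G\in L^r$, $r<\tfrac d{d-4}$, gives $g=G*F\in L^{q^+}$ with $\tfrac1{q^+}=\tfrac1r+\tfrac pq-1$ as long as the right-hand side is $\ge 0$, and $g\in L^\infty$ once it drops below $0$ (by then using $r$ and its conjugate against the already-known higher integrability of $F$). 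Along the iteration the reciprocals $a_k=1/q_k$ obey $a_{k+1}=p\,a_k-(1-\tfrac1r)$, a recursion whose fixed point $a_\ast=\tfrac{1-1/r}{p-1}$ tends to $\tfrac{4/d}{p-1}$ as $r\uparrow\tfrac d{d-4}$; since $p<1+\tfrac8d$ (indeed $p<1+\tfrac8{d-4}$ already suffices) we have $a_0<a_\ast$, the fixed point is repelling, so $a_k$ strictly decreases and leaves the admissible range after finitely many steps --- i.e.\ $g\in L^\infty(\rd)$. (When $d=4$ one starts with $q_0>p$, so $q_0/p>1=\tfrac d4$ and a single convolution with conjugate exponents already gives $g\in L^\infty$.)

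\emph{$H^4$, the tail, and classical regularity.} Once $g\in L^\infty\cap L^2$ we get $F\in L^2\cap L^\infty$ (from $\int|g|^{2p}\le\|g\|_\infty^{2p-2}\|g\|_2^2$), hence $g=TF\in H^4(\rd)$ by Plancherel; since moreover $F\in L^q$ for every $q\in[2,\infty]$, also $g=TF\in W^{4,q}(\rd)$ for all such $q$, so by Sobolev (and a further Schauder bootstrap) $g$ is a classical solution of \eqref{515}, and in particular \eqref{515} holds as an identity in $L^2(\rd)$. For the low-exponent tail, run the representation downward: $g\in L^q\cap L^\infty\Rightarrow g=G*F\in L^{q/p}$ by Young with $G\in L^1$, valid while $q/p\ge 1$; starting from $q=2$ this reaches $g\in L^{q_\sharp}\cap L^\infty$ with $q_\sharp\in[1,p)$ after finitely many steps, hence $g\in L^p(\rd)$, hence $F=|g|^{p-1}g\in L^1\cap L^\infty$, and therefore $g=G*F\in L^{1+\eps}(\rd)$ for every $\eps>0$ --- directly by Young ($G\in L^{1+\eps}$, $F\in L^1$) when $1+\eps<\tfrac d{d-4}$, and from $L^{1+\eps_0}\cap L^\infty\subset L^{1+\eps}$ otherwise.

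\emph{Where the difficulty lies.} The substance of the argument is that the variational solution is a priori only in $H^2\cap L^{p+1}$, while the nonlinearity $g\mapsto|g|^{p-1}g$, having $p>1$, \emph{lowers} integrability; one must therefore bootstrap in both directions, and the reason the upward half reaches $L^\infty$ in finitely many steps --- rather than stalling --- is precisely the subcriticality $p<1+\tfrac 8d$, which places the initial Sobolev exponent $q_0$ on the favorable side of the repelling fixed point $q_\ast=\tfrac{d(p-1)}4$ of the iteration. The rest is dimension-by-dimension bookkeeping of Sobolev embeddings and of the local singularity of $G$, which, while routine, must be tracked with some care for $d\ge 5$. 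The same scheme applies verbatim to the mixed-derivative fourth order NLS \eqref{715} and to the Kawahara profile equation \eqref{15}, the only change being the explicit (still positive, still quartically elliptic) symbol.
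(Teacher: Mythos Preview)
Your proof is correct and follows essentially the same route as the paper: write $g=(\De^2+b\De+\om)^{-1}[|g|^{p-1}g]$, bootstrap upward to $L^\infty$ using the four derivatives gained by the Green's operator (the decisive numerology being $p<1+\tfrac{8}{d-4}$, which is implied by $p<1+\tfrac{8}{d}$), then bootstrap downward to $L^{1+\eps}$ using $G\in L^1$. The only cosmetic difference is that you phrase the upward step as an explicit fixed-point iteration via Young's inequality with $G\in L^r$, $r<\tfrac{d}{d-4}$, whereas the paper argues by contradiction on $q_0=\sup\{q:g\in L^q\}$ via Sobolev embedding; these are equivalent formulations of the same estimate.
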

 \begin{proof}
  Note that by the restrictions on $b, \om$,  we have that the operator $(\De^2+b \De +\om)$ is invertible on $L^2(\rd)$. Let $\tilde{g}:=(\De^2+b \De +\om)^{-1} [|g|^{p-1} g]$. From Sobolev embedding, we easily get that $\tilde{g}\in H^{\al}(\rone), \al<4-\f{d(p-1)}{2(p+1)}$, since 
  $$
  \|\tilde{g}\|_{H^\al(\rd)}\leq \| |g|^{p-1} g\|_{H^{4-\al}(\rd)}\leq C \||g|^{p-1} g\|_{L^{\f{p+1}{p}}}  \leq C \|g\|_{L^{p+1}}^p.
  $$
  In addition, for every test function $h$, we have 
  $$
	\dpr{\De \tilde{g}}{\De h}+  \dpr{b \De \tilde{g}+\om \tilde{g}}{h} =  \dpr{|g|^{p-1} g}{h}= 	\dpr{\De g}{\De h}+  \dpr{b \De g+\om g}{h}.
  $$
  It follows that $g=\tilde{g}$ in the sense of distributions, whence $g\in H^\al(\rd)$. 
  We will show that $g\in L^\infty(\rd)$. Denote 
  $
  q_0=\sup\{q: g\in L^q(\rd)   \}.
  $
  Clearly, $q_0\geq p+1$, by assumption. We will show first that $q_0=\infty$. Assume not.  
  By Sobolev embedding, we have 
  $$
  \|g\|_{L^q(\rd)}= \|\tilde{g}\|_{L^q(\rd)}\leq C \||g|^{p-1} g\|_{L^{\f{p+1}{p}}} \leq C \|g\|_{L^{p+1}}^p<\infty 
  $$
  as long as $\f{1}{q}>\f{p}{p+1}-\f{4}{d}$. In particular, we can take $q$ as close to $\infty$ (and hence $q_0=\infty$), if $d\leq 4$. So, assume $d\geq 5$. It follows that $\f{1}{q_0}\leq \f{p}{p+1}-\f{4}{d}$. 
  
  Take any $q_0<q<\infty$. We have, by Sobolev embedding 
\begin{equation}
\label{contr} 
 \|\tilde{g}\|_{L^q(\rd)}\leq C \||g|^{p-1} g\|_{L^{r}}\leq C \|g\|_{L^{r p}}^p, 
\end{equation}
so long as $d(\f{1}{r}-\f{1}{q})\leq 4$ or $\f{1}{r}\leq \f{4}{d}+\f{1}{q}$. If $\f{4}{d}+\f{1}{q}<1$, we take $r: \f{1}{r}=\f{4}{d}+\f{1}{q}$, whereas, if we have $ \f{4}{d}+\f{1}{q}\geq 1$, we can take $r=\f{p+1}{p}$ and we have a contradiction right away, since the left-hand side of \eqref{contr} is   unbounded (by the definition of $q_0$),   while the right-hand  is bounded. For the remainder, take $r: \f{1}{r}=\f{4}{d}+\f{1}{q}$.

  Clearly, if $rp<q_0$, this would be a contradiction, because the left-hand side is supposed to be unbounded (by the definition of $q_0$), 
  while the right-hand side clearly is. We claim that this is the case, under our restrictions for $p\in (1, 1+\f{8}{d})$. We have 
  \begin{eqnarray*}
\f{1}{r}-\f{p}{q_0}= \f{4}{d}+\f{1}{q}-\f{p}{q_0}=\f{4}{d}-\f{p-1}{q_0}+o(q-q_0)
  \end{eqnarray*}
  So, if we show that $\f{4}{d}> \f{p-1}{q_0}$, we will have achieved the contradiction, as we can take $q$ very close to $q_0$.  Indeed, by the inequality for $\f{1}{q_0}$, we have 
  $
  \f{p-1}{q_0}\leq (p-1)\left(\f{p}{p+1}-\f{4}{d}\right)
$
Resolving the inequality 
$$
(p-1)\left(\f{p}{p+1}-\f{4}{d}\right)<\f{4}{d},
$$  
  leads to the solution $1<p<1+\f{8}{d-4}$, which of course contains the set $(1, 1+\f{8}{d})$, so it is true for all $p$ in the set that we are interested in. We have reached a contradiction, with $q_0<\infty$. 
  
  Thus, $q_0=\infty$. This does not mean yet that $g\in L^\infty(\rd)$, but this follows easily by Sobolev embedding, once we know that $g\in \cap_{2\leq q<\infty} L^q(\rd)$. Furthermore, we see that the same type of arguments imply $g\in H^5(\rd)$ and  that for every $p<\infty$ and for every $\eps>0$, 
  $g\in W^{4-\eps, p}(\rd)$.

  For our next step, we shall need a representation of the Green's function of the operator 
  $(\De^2+b \De +\om)^{-1}$ as follows. We have 
  \begin{eqnarray*}
 (\De^2+b \De +\om)^{-1} &=& (-\De+\f{-b+\sqrt{b^2-4\om}}{2})^{-1}(-\De+\f{-b-\sqrt{b^2-4\om}}{2})^{-1}= \\
 &=& (b^2-4\om)^{-1/2} [  (-\De+\f{-b-\sqrt{b^2-4\om}}{2})^{-1} - (-\De+\f{-b+\sqrt{b^2-4\om}}{2})^{-1}].
  \end{eqnarray*}
 In the case $b^2-4\om>0, \om>0, b<0$, both $\f{-b\pm\sqrt{b^2-4\om}}{2}$ are positive numbers, so clearly the corresponding Greens function $G$  has decay $e^{-\sqrt{\f{-b-\sqrt{b^2-4\om}}{2}}|x|}$, according to \eqref{loukas:10}.  
 
 As far as the case $b^2-4\om<0$ is concerned, it is not hard to see, in the same way,  that the Green's function $G$  has decay rate $e^{-k_\om|x|}$, where  
 $$
 k_\om:= \left\{ 
 \begin{array}{cc}
 \f{\sqrt{2\sqrt{\om}+b}}{2} & b<0 \\
  \f{\sqrt{2\sqrt{\om}-b}}{2} & b>0 
 \end{array} 
 \right. 
 $$
  In both cases,   the Green's function enjoys exponential rate of decay.  
  
  For $p\geq 2$, we can actually conclude that $g\in L^1(\rd)$ since by the 
  Hardy-Littlewood-Sobolev inequality 
  $$
  \|\tilde{g}\|_{L^1(\rd)} \leq \|G\|_{L^1(\rd)} \||g|^{p-1} g\|_{L^1(\rd)}\leq C\|g\|_{L^p(\rd)}^p<\infty,
  $$
  as $g\in L^2\cap L^\infty$, in particular $g\in L^p(\rd)$. For $p<2$, denote 
  $q_0=\inf\{q: g\in L^q(\rd)\}$. Our claim is that $q_0=1$. Assume for a contradiction that $q_0>1$. We will show that for every $q>q_0$, we have that $g\in L^{\f{q}{p}}(\rd)$, which would be a contradiction with $q_0>1$. Indeed, by Hardy-Littlewood-Sobolev 
  $$
  \|\tilde{g}\|_{L^{\f{q}{p}}(\rd)}\leq \|G\|_{L^1(\rd)} \||g|^{p-1} g\|_{L^{\f{q}{p}}(\rd)}\leq C\|G\|_{L^1} \|g\|_{L^q(\rd)}^p.
  $$
  This establishes the contradiction with $q_0>1$, hence $g\in \cap_{1<q} L^q(\rd)$.

 \end{proof}

 \subsection{Linearized problems and spectral stability}
  \label{sec:2.1}

 We next discuss the linearized problems and the stability of the waves. For solutions $\phi$ of \eqref{15}, we introduce the traveling wave ansatz, $u(t,x)=
 \phi(x+ \om t)+ v (t, x+t \om)$. Plugging this back in \eqref{8} and ignoring all terms $O(v^2)$, we obtain the following linearized problem 
 \begin{equation}
 \label{lin1}
 v_t+\p_x[\p_x^4+b \p_x^2+\om - p|\phi|^{p-1}]v=0.
 \end{equation}
 Denoting $\cl_+:=\p_x^4+b \p_x^2+\om - p|\phi|^{p-1}$, the associated eigenvalue problem is obtained by setting  $v(t,x) \to e^{-\mu t} z(x)$ in \eqref{lin1}, which results in 
  \begin{equation}
 \label{lin2}
 \p_x\cl_+ z=\mu z
 \end{equation}
 
We proceed similarly with the linearization of the NLS problem \eqref{10}. Consider  solutions $\phi$ of \eqref{715} and then 
perturbations of the  solution $u(t,x)=e^{ - i \om t} \phi$ of \eqref{712}  in the form 
 $u=e^{-i \om t}[\phi+z_1+i z_2]$. Plugging this ansatz into \eqref{10}, retaining only the linear in $z$ terms and taking real and imaginary parts leads us to the system 
 \begin{equation}
 \label{50}
 \p_t \left(\begin{array}{c} 
 z_1 \\ z_2 
 \end{array}\right) = \left(\begin{array}{cc} 
 0 & -1 \\ 1 & 0
 \end{array}\right) \left(\begin{array}{cc} 
 \De^2+ \eps |\vec{b}|^2 \p_{x_1}^2+\om - p |\phi|^{p-1}  & 0 \\ 0 &  \De^2+ \eps |\vec{b}|^2 \p_{x_1}^2 +\om -  |\phi|^{p-1}
 \end{array}\right)\left(\begin{array}{c} 
 z_1 \\ z_2 
 \end{array}\right)
 \end{equation}
 Thus, we introduce the scalar self-adjoint operators $\cl_\pm$ (note $\cl_+<\cl_-$) 
$$
 \left\{
 \begin{array}{ll}
 \cl_+ &= \De^2+ \eps |\vec{b}|^2 \p_{x_1}^2 + \om - p |\phi|^{p-1},  \\
 \cl_- &= \De^2+ \eps |\vec{b}|^2 \p_{x_1}^2 +\om -  |\phi|^{p-1}
 \end{array}
 \right. 
$$
 so that the eigenvalue problem associated with \eqref{50} and the assignment 
 $\vec{z}\to e^{\mu t} \vec{z}$,   takes the form 
 \begin{equation}
 \label{60}
 \cj \cl \vec{z}=\mu \vec{z}.
 \end{equation}
 where 
 $$
 \cj:=\left(\begin{array}{cc} 
 0 & -1 \\ 1 & 0
 \end{array}\right), \cl:=  \left(\begin{array}{ll} 
 \cl_+   & 0 \\ 0 &  \cl_-
 \end{array}\right). 
 $$
 Finally, for solutions $\phi$ of \eqref{NLS22}, the linearized problem appears in the form 
 \begin{equation}
 \label{52}
 \p_t \left(\begin{array}{c} 
 z_1 \\ z_2 
 \end{array}\right) = \left(\begin{array}{cc} 
 0 & -1 \\ 1 & 0
 \end{array}\right) \left(\begin{array}{cc} 
 \De^2+ b \De +\om - p |\phi|^{p-1}  & 0 \\ 0 &  \De^2+b\De+\om -  |\phi|^{p-1}
 \end{array}\right)\left(\begin{array}{c} 
 z_1 \\ z_2 
 \end{array}\right)
 \end{equation}
 This is again in the form \eqref{60}, with 
 $$
 \left\{
 \begin{array}{ll}
 \cl_+ &= \De^2+b\De + \om - p |\phi|^{p-1},  \\
 \cl_- &= \De^2+ b \De +\om -  |\phi|^{p-1}. 
 \end{array}
 \right. 
$$ 
We are now ready to give the definition of spectral  stability. Note that the essential spectrum   is, by Weyl's theorem,  is the range of the function  $\xi\in\rd \to |\xi|^4-b|\xi|^2+\om$. Clearly,  this  is the interval $[\om-\f{b^2}{4}, \infty)$, when $b>0$ and $[\om, \infty)$, when $b<0$.

\begin{definition}
\label{defi:lin}
The Kawahara waves are spectrally stable, provided the eigenvalue problem \eqref{lin2} does not have non-trivial solutions\footnote{Note that by the Hamiltonian symmetry of the problem $\mu\to -\mu$, the existence of eigenvalues $\mu: \Re \mu<0$ is equivalent to the existence of $\mu: \Re \mu>0$} 
$(\mu, z): \Re \mu>0, z\in H^5(\rone)$. 

The waves $\phi$ are spectrally stable, if the eigenvalue problems \eqref{60} (\eqref{52} respectively) do not have non-trivial solutions $(\mu, \vec{z}): \Re \mu>0, \vec{z}\in 
H^4(\rd)\times H^4(\rd)$. 
\end{definition}
  \subsection{Stability of linearized systems and index counting theories} 
 \label{sec:2.33}
  We need a quick introduction of   the instability index count theory, as developed in \cite{KKS}, \cite{KKS2}, \cite{Pel} (see also the book \cite{KP})  and more recently in \cite{KS}, \cite{LZ}. We will  only consider appropriate representative corollaries, which serve our purposes. For the purposes of this paper, we will follow closely the approach and the notations in \cite{LZ}. 
 	 To that end, we consider an  eigenvalue problem in the form\footnote{Before we embark on further details, let us once again emphasize that the examples that we will be interested in herein will be either in the form \eqref{lin2} (i.e. the KdV-like case) or in the form \eqref{60} (i.e. the NLS like case). }
 	 \begin{equation}
 	 \label{e:10}
 	 \cj \cl f=\la f.
 	 \end{equation}
We need to introduce a a real Hilbert space, so that $f\in X$, its dual $X^*$, so that $\cl:X\to X^*$, so that  the bilinear form $(u,v)\to \dpr{\cl u}{v}$ is a bounded symmetric bilinear form on $X\times X$. Next, we shall need to assume that $\cj$ has a domain $D(\cj)\subset X^*$, so that $\cj: D(\cj)\to X$, $\cj^*=-\cj$. Furthermore, ssume that there is an $\cl$ invariant decomposition of the base space in the form 
$$
X=X_-\oplus Ker[\cl]+\oplus X_+
$$
where (see Section 2.1, \cite{LZ}),  $\cl|_{X_-}<0$, $n(\cl):= dim(X_-)<\infty$,   $dim(Ker[\cl])<\infty$ and $\cl|_{X_+}\geq \de$, for some $\de>0$. In general, we will denote by $n(M)$ the (finite) number of negative eigenvalues (counted with multiplicities) of a generic self-adjoint operator $M$. 

Next, consider the finite dimensional generalized eigenspace at the zero eigenvalue, defined as follows 
$$
E_0=gKer[\cj \cl]=span[\cup_{k=1}^\infty [Ker[\cj\cl]^k] ]
$$
Note that $Ker[\cl]\subset E_0$ and introduce $\tilde{E}_0: E_0=Ker[\cl]\oplus \tilde{E}_0$. Consider the integer $k_0^{\leq 0}(\cl):= n(\cl|_{\tilde{E}_0})$.  Equivalently, taking an arbitrary basis in $\tilde{E}_0$, $\{\psi_1, \ldots, \psi_N\}\subset D(\cl)$, define $k_0^{\leq 0}(\cl)$ to be the number of negative eigenvalues of the $N\times N$ matrix $\cd=(\dpr{\cl \psi_i}{\psi_j})_{{i,j}, 1\leq i,j\leq N}$. 

Under these general assumptions,  it is proved in \cite{LZ} (see Theorem 2.3 and also Theorem 1, \cite{KKS2} for the case where $\cj$ has a bounded inverse) that 
\begin{equation}
\label{se:10} 
k_r+2 k_c+ 2k_0^{\leq 0}\leq n(\cl)-n(\cd), 
\end{equation}
where $k_r$ is the number of real and positive solutions $\la$ in \eqref{e:10} (i.e. real instabilities), $2 k_c$ is the number of solutions $\la$ in \eqref{e:10} with positive real part (i.e. modulational instabilities). 
 
\subsubsection{NLS-like problem}
For the eigenvalue problem in the form \eqref{60}, we have that $\cj$ is invertible and anti-symmetric, $\cj^{-1}=\cj^*=-\cj$ and  $X=H^2(\rd), X^*=H^{-2}(\rd), d\geq 1$. In addition,assume  that $\cj: Ker[\cl]\to (Ker[\cl])^\perp$. 
We now  introduce the matrix $\cd$ as follows. 

Let $Ker[\cl]=\{\phi_1, \ldots, \phi_n\}$, 
 then $\psi_j: \cj \cl \psi_j = \phi_j$. Note that the last equation has solution, since $ \cj^{-1} \phi_i \in Ker[\cl]^\perp$ and hence $\cl^{-1}[\cj^{-1} \phi_i]$ is well-defined. Hence the matrix $\cd$ is 
 \begin{equation}
 \label{dij}
 \cd_{i j}=\dpr{\cl \psi_i}{\psi_j}=\dpr{\cl^{-1} [\cj^{-1} \phi_i] }{\cj^{-1} \phi_j}=\dpr{\cl^{-1}
 	 [\cj \phi_i] }{\cj \phi_j}. 
 \end{equation}
By the index counting inequality \eqref{se:10}  if $n(\cl)\leq n(D)$, we can conclude that  spectral stability holds true, since the right-hand side of \eqref{se:10} is non-positive, hence all the indices on the left are zero as well.  

Next, we discuss $gKer[\cj \cl]$. We have at least $d+1$  elements in $Ker[\cl]$, namely $\phi_0:=\left(\begin{array}{c}
0 \\ \phi
\end{array}\right)$ and $\phi_j:=\left(\begin{array}{c}
\p_j \phi \\ 0
\end{array}\right), j=1, \ldots, d$. Assuming that $\phi\perp Ker[\cl_+]$ and $\nabla \phi\perp Ker[\cl_-]$, we can identify at least $d+1$  more elements of the generalized kernel $E_0$, namely $\psi_0=\left(\begin{array}{c}
\cl_+^{-1} \phi \\ 0
\end{array}\right)$ and 
$\psi_j=\left(\begin{array}{c}
0 \\ -\cl_-^{-1} \p_j \phi
\end{array}\right), j=1, \ldots, d$.   
This means that the algebraic multiplicity of the zero eigenvalue is at least $2(d+1)$, consisting of $d+1$ eigenfunctions and $d+1$ generalized eigenfunctions. One may wonder whether there is any more non-trivial elements in $gKer[\cj \cl]$. The non-degeneracy 
condition $\dpr{\cl_+^{-1} \phi}{\phi}\neq 0$, which appears in the statement of the main result is necessary condition that the Jordan block associated to the eigenvector  $\phi_0$ is exactly two dimensional. 
To this end,  assume that there is a third element, $q: \cj \cl q=\psi_0$. This would mean, that there is $q: \cl_- q=\cl_+^{-1} \phi$. By the self-adjointness of $\cl_-$, the solvability condition is exactly $\dpr{\cl_+^{-1} \phi}{\phi}\neq 0$. Indeed, $R(\cl_-)=Ker(\cl_-)^{\perp}=span \{\phi\}^\perp$,  so a third element in the Jordan cell for $\phi_0$ does not exist exactly when 
$\dpr{\cl_+^{-1} \phi}{\phi}\neq 0$.
\subsubsection{Kawahara-like problem}  For eigenvalues problem in the form \eqref{lin2} 
\begin{equation}
\label{e:103}
\p_x \cl f=\la f,
\end{equation}
where we set up  again $X=H^2(\rone), X^*=H^{-2}(\rone)$, while $, \cl=\cl_+, \cj=\p_x, \cj^*=-\cj$. This  satisfies  the  requirements of the theory put forward in the beginning of this section. Next, regarding the generalized kernel of $\p_x \cl_+$, we clearly have that $\phi'\in Ker[\cl]\subseteq Ker[\p_x \cl]$. Furthermore, if $\phi\perp Ker[\cl_+]$, there is additional element in $gKer[\p_x \cl]$, namely $\cl_+^{-1} \phi$, since $(\p_x \cl_+)^2[\phi']=\p_x \cl_+[\p_x \cl_+[\cl_+^{-1} \phi]]=0$. This means that the zero is multiplicity two eigenvalue for $\p_x \cl_+$, which is generated by the translational invariance.  
\subsubsection{Sufficient condition for spectral stability} Based on the inequality \eqref{se:10}, it is clear that spectral stability holds, if $n(\cl)=1$  and $n(\cd)\geq 1$. Furthermore, in both cases under considerations, and under the assumption $\phi\perp Ker[\cl_+]$, we have the vector $\psi=\cl_+^{-1} \phi$ in the generalized kernel of $\cj \cl$. Thus, $\cd_{11}=\dpr{\cl_+^{-1} \phi}{\phi}$, whence since $\cd_{11}<0$, we can assert that the matrix $\cd$ has at least one negative eigenvalue (since $\dpr{\cd e_1}{e_1}=\cd_{11}<0$, which would then imply stability. Thus, when we specify to the specific problems that we face, we can formulate  the following sufficient condition for spectral stability.  
\begin{corollary}
	\label{stability} 
	For the spectral problems \eqref{lin2} and \eqref{60}, spectral stability follows, provided
	\begin{itemize}
		\item $n(\cl_+)=1$, $\cl_-\geq 0$. 
		\item $\phi\perp Ker[\cl_+]$, $\dpr{\cl_+^{-1} \phi}{\phi}<0$. 
	\end{itemize}
\end{corollary}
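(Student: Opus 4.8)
The plan is to read the corollary off the two index‑counting results recalled in Section~\ref{sec:2.33}, treating the Kawahara‑type problem \eqref{lin2} and the NLS‑type problem \eqref{60} separately, and feeding in the corollary's analytic hypotheses ($n(\cl_+)=1$, $\cl_-\ge 0$, $\phi\perp Ker[\cl_+]$, $\dpr{\cl_+^{-1}\phi}{\phi}<0$) together with a few structural facts about the linearized operators.

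For \eqref{lin2} I would invoke the corollary of \cite{LZ} for problems $\p_x\cl f=\la f$ quoted just above: spectral stability holds as soon as $n(\cl_+)=1$, $Ker[\cl_+]$ is finite dimensional and contains $\phi'$, $\phi\perp Ker[\cl_+]$, and $\dpr{\cl_+^{-1}\phi}{\phi}<0$. Two of these are hypotheses of the corollary; the membership $\phi'\in Ker[\cl_+]$ comes from differentiating the profile equation \eqref{15} in $x$; and $\dim Ker[\cl_+]<\infty$ because, under \eqref{342}, the symbol of $\p_x^4+b\p_x^2+\om$ is bounded below by a positive constant, so its essential spectrum is a half‑line bounded away from $0$, while $-p|\phi|^{p-1}$ is a decaying (relatively compact) perturbation and can only add finitely many eigenvalues below the essential spectrum. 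The assumption $\cl_-\ge 0$ plays no role in this scalar problem. This disposes of \eqref{lin2}.

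For \eqref{60} I would apply the index identity \eqref{e:20} with $\cj$ (bounded, invertible, skew‑symmetric, $\cj^{-1}=-\cj$) and $\cl=\mathrm{diag}(\cl_+,\cl_-)$. Using the standard spectral picture $Ker[\cl_-]=span[\phi]$ and $\nabla\phi\subset Ker[\cl_+]$ (the latter from differentiating \eqref{715}), I would choose a basis of $Ker[\cl]$ adapted to the block structure, $(\chi_1,0),\dots,(\chi_m,0),(0,\phi)$, with $\chi_1,\dots,\chi_m$ spanning $Ker[\cl_+]$. Since $\cj^{-1}(\chi_i,0)=(0,-\chi_i)$ and $\cj^{-1}(0,\phi)=(\phi,0)$, the hypothesis $\phi\perp Ker[\cl_+]$ is exactly what guarantees $\cj^{-1}:Ker[\cl]\to Ker[\cl]^\perp$ and makes the reduced inverse $\cl^{-1}$ applicable to these vectors. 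A short computation of the matrix $D$ in \eqref{dij} then gives a block‑diagonal matrix: $D_{ij}=\dpr{\cl_-^{-1}\chi_i}{\chi_j}$ for $i,j\le m$, $D_{i,m+1}=0$, and $D_{m+1,m+1}=\dpr{\cl_+^{-1}\phi}{\phi}$, the reduced inverses being well defined because $\chi_i\perp Ker[\cl_-]$ and $\phi\perp Ker[\cl_+]$. Since $\cl_-\ge 0$, the operator $\cl_-^{-1}$ is positive definite on $Ker[\cl_-]^\perp$, so the $m\times m$ block is positive definite, while $D_{m+1,m+1}<0$ by hypothesis; hence $D$ is nondegenerate with $n(D)=1$. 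On the other hand $n(\cl)=n(\cl_+)+n(\cl_-)=1+0=1$ because $\cl_-\ge 0$. Substituting into \eqref{e:20} yields $k_r+2k_c+2k_i^-=n(\cl)-n(D)=0$, so $k_r=k_c=k_i^-=0$; in particular \eqref{60} has no eigenvalue with positive real part, which is the asserted spectral stability. (The linearization \eqref{52} of the pure‑Laplacian model is of the same form \eqref{60} and is covered verbatim.)

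I expect no hard analysis in the proof of the corollary itself — it is essentially bookkeeping with the two abstract theorems. The one place that genuinely requires extra input is the structural spectral picture of $\cl_\pm$: finite dimensionality of $Ker[\cl_\pm]$, the fact that $\phi'$ (resp.\ $\nabla\phi$) lies in $Ker[\cl_+]$, and, most delicately, the simplicity of the zero eigenvalue of $\cl_-$ so that $Ker[\cl_-]=span[\phi]$; these are precisely what the variational construction and the accompanying spectral analysis of the normalized waves in Sections~\ref{sec:3}--\ref{sec:4} are set up to provide. (Verifying the analytic hypotheses $n(\cl_+)=1$, $\cl_-\ge 0$, and especially $\dpr{\cl_+^{-1}\phi}{\phi}<0$ for the concrete waves is the substantive work of the paper, but that is downstream of this corollary rather than part of its proof.)
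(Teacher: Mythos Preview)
Your proposal is correct and is exactly the specialization of the two index-counting results in Section~\ref{sec:2.33} that the paper has in mind; the paper does not give a separate proof of the corollary beyond the sentence ``Thus, when we specify to the specific problems that we face, we can formulate the following sufficient condition for spectral stability.'' Your observation that the NLS computation implicitly uses $Ker[\cl_-]=span[\phi]$ (not literally part of the corollary's hypotheses, but supplied by Proposition~\ref{prop:144}) is well taken and matches how the paper uses the corollary downstream.
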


\subsection{Necessary conditions for existence of \eqref{20}} 
We have the following Pohozaev identities. 

\begin{lemma}(Pohozaev's identities)
  Let  some smooth and decaying $\phi$ satisfy 
 \begin{equation}
  \Delta^2\phi+ \eps \sum_{j,k}^{n}b_jb_k\p_{j,k}\phi +\om\phi-|\phi|^{p-1}\phi =0 \label{EqPokhozaev}.
 \end{equation}
  Then
 \begin{eqnarray}
 \label{app:10}
  \int_{\rd}|\Delta \phi|^2dx &=& \frac{d(p-1) - 2 (p+1)}{2(p+1)} \int_{\rd}|\phi|^{p+1}dx  +\om \int_{\rd }|\phi|^2dx,\\
  \label{app:20}
  \eps   \int_{\rd}|\vec{b}\cdot \nabla \phi|^2dx &=&   \frac{d(p-1) - 4(p+1)}{2(p+1)} \int_{\rd}|\phi|^{p+1}dx  +2\om \int_{\rd }|\phi|^2dx, \\
    \label{app:30}
  (d(p-1) - 4(p+1)) \|\Delta \phi\|^2 &-& \eps(d(p-1) - 2 (p+1))\|\vec{b}\cdot \nabla \phi\|^2 + \om d(p-1)  \|\phi\|^2dx=0
 \end{eqnarray}
 \end{lemma}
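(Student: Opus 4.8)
The plan is to derive the three Pohozaev-type identities \eqref{app:10}, \eqref{app:20}, \eqref{app:30} from the profile equation \eqref{EqPokhozaev} by testing it against the two natural multipliers, $\phi$ itself and the scaling generator $x\cdot\nabla\phi$, and then eliminating the nonlinear term. Throughout I work under the standing assumption that $\phi$ is smooth and decays (together with its derivatives) fast enough at infinity to justify all integrations by parts with no boundary contributions; by Proposition \ref{prop:nm} this regularity/decay is automatic for the waves we care about.

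\textbf{Step 1: the $L^2$ pairing.} First I would pair \eqref{EqPokhozaev} with $\phi$ and integrate over $\rd$. Integrating by parts twice in the biharmonic term gives $\int |\Delta\phi|^2$, integrating by parts once in each of the mixed second derivatives $\p_{j,k}\phi$ gives $-\int \p_j\phi\,\p_k\phi$, which summed against $b_jb_k$ yields $-\int |\vec b\cdot\nabla\phi|^2$, and the remaining terms are immediate. This produces the identity
\begin{equation}
\label{plan:pair1}
\int_{\rd}|\Delta\phi|^2\,dx-\eps\int_{\rd}|\vec b\cdot\nabla\phi|^2\,dx+\om\int_{\rd}|\phi|^2\,dx-\int_{\rd}|\phi|^{p+1}\,dx=0.
\end{equation}

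\textbf{Step 2: the scaling (dilation) pairing.} Next I would pair \eqref{EqPokhozaev} with $x\cdot\nabla\phi$. This is the standard Pohozaev computation: for a homogeneous constant-coefficient operator of order $m$, one has $\int (\mathcal{A}\phi)(x\cdot\nabla\phi) = \big(\tfrac{d}{2}-m\big)\int \phi\,\mathcal{A}\phi$ after integration by parts (using $\int |\nabla^k\phi|^2(x\cdot\nabla)$-type manipulations, or equivalently the identity $\int f\,(x\cdot\nabla g)+\int g\,(x\cdot\nabla f)=-d\int fg$ applied with $f=g$ to each quadratic form). Concretely: the $\Delta^2$ term contributes $\big(\tfrac d2-4\big)\int |\Delta\phi|^2$; the second-order term $\eps\sum b_jb_k\p_{j,k}\phi$ contributes $\eps\big(\tfrac d2-2\big)\int\!(\text{quadratic form})=-\eps\big(\tfrac d2-2\big)\int|\vec b\cdot\nabla\phi|^2$; the $\om\phi$ term contributes $-\tfrac d2\om\int|\phi|^2$; and the nonlinear term, using $|\phi|^{p-1}\phi\,(x\cdot\nabla\phi)=\tfrac{1}{p+1}x\cdot\nabla(|\phi|^{p+1})$, contributes $\tfrac d{p+1}\int|\phi|^{p+1}$. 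Collecting and clearing denominators gives a second independent linear relation among $\int|\Delta\phi|^2$, $\int|\vec b\cdot\nabla\phi|^2$, $\om\int|\phi|^2$, $\int|\phi|^{p+1}$.

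\textbf{Step 3: linear algebra to extract the three stated forms.} At this point I have two independent linear equations \eqref{plan:pair1} and the Step 2 relation in the four scalar quantities $A:=\int|\Delta\phi|^2$, $B:=\eps\int|\vec b\cdot\nabla\phi|^2$, $C:=\om\int|\phi|^2$, $D:=\int|\phi|^{p+1}$. Solving the $2\times2$ system for $A$ and $B$ in terms of $C$ and $D$ yields precisely \eqref{app:10} and \eqref{app:20} after simplification of the coefficients $\tfrac{d(p-1)-2(p+1)}{2(p+1)}$ and $\tfrac{d(p-1)-4(p+1)}{2(p+1)}$; and \eqref{app:30} is then obtained as a specific integer-coefficient linear combination of \eqref{app:10}, \eqref{app:20} chosen to cancel the $\int|\phi|^{p+1}$ term (multiply \eqref{app:10} by $(d(p-1)-4(p+1))$, \eqref{app:20} by $-(d(p-1)-2(p+1))$, and add; the $\om\|\phi\|^2$ coefficient collapses to $\om d(p-1)$). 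This last step is pure bookkeeping.

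\textbf{Main obstacle.} The only genuine subtlety is justifying the integration by parts in the dilation pairing of Step 2: the multiplier $x\cdot\nabla\phi$ grows linearly, so one must know a priori that $\phi$ and enough of its derivatives decay fast enough (e.g. exponentially, or at least with enough polynomial decay) for the boundary terms from cutting off at $|x|=R$ to vanish as $R\to\infty$. For the waves under consideration this is supplied by the exponential-decay estimates inside the proof of Proposition \ref{prop:nm} together with elliptic regularity; for a general ``smooth and decaying $\phi$'' one either takes this as a hypothesis or runs the computation against a smooth radial cutoff $\chi(x/R)$ and checks that the error terms are $O(R^{-1})$ times convergent integrals. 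Everything else is routine.
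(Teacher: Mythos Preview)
Your approach is exactly the paper's: pair \eqref{EqPokhozaev} with $\phi$ and with $x\cdot\nabla\phi$, then solve the resulting $2\times2$ linear system for $\int|\Delta\phi|^2$ and $\eps\int|\vec b\cdot\nabla\phi|^2$ in terms of $\int|\phi|^{p+1}$ and $\om\int|\phi|^2$; identity \eqref{app:30} is a linear combination eliminating the nonlinear term. One arithmetic slip to flag: your general rule $\int(\mathcal A\phi)(x\cdot\nabla\phi)=(\tfrac d2-m)\int\phi\,\mathcal A\phi$ is off---the scaling computation gives $\tfrac{m-d}{2}\int\phi\,\mathcal A\phi$, so the biharmonic contribution is $(2-\tfrac d2)\int|\Delta\phi|^2$ and the second-order contribution is $-(1-\tfrac d2)\,\eps\int|\vec b\cdot\nabla\phi|^2$, which is what the paper records; with that correction your Step~3 goes through as written.
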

 
 \begin{proof}

 Multiplying  \eqref{EqPokhozaev} by $ \phi  $ and integrating over $ \rd $ we get 
 \begin{equation*}
  \int_{\rd}|\Delta \phi|^2dx-\eps \int_{\rd}|\vec{b}\cdot \nabla \phi|^2dx-\int_{\rd}|\phi|^{p+1}dx+\om\int_{\rd }|\phi|^2dx=0. 
 \end{equation*}
 Also, multiplying \eqref{EqPokhozaev} by $ x\cdot\nabla\phi $ and integrating over $ \rd $ we get 
\begin{equation*}
 \left ( 2-\frac{d}{2}\right )\int_{\rd}|\Delta \phi|^2dx-\left (1-\frac{d}{2}\right ) \eps \int_{\rd}|\vec{b}\cdot \nabla \phi|^2dx+\frac{d}{p+1}\int_{\rd}|\phi|^{p+1}dx - \om\frac{d}{2}\int_{\rd}|\phi|^2dx = 0.
\end{equation*}

 Let $ A= \int_{\rd}|\Delta \phi|^2dx$, $ B=\eps \int_{\rd}|\vec{b}\cdot \nabla \phi|^2dx $, $ C=\int_{\rd}|\phi|^{p+1}dx $ and $ D= \int_{\rd}|\phi|^2dx$.

  Solving for $ A $ and $ B $ in terms of $ C $ and $ D $ we get
  \begin{equation*}
  \begin{cases}
  A&=\frac{d(p-1) - 2 (p+1)}{2(p+1)}C+\om D,\\
  B&=   \frac{d(p-1) - 4(p+1)}{2(p+1)}C+2\om D.\\
  \end{cases} 
    \end{equation*}
    which is \eqref{app:10} and \eqref{app:20}. The formula \eqref{app:30} follows similarly. 
  \end{proof}
\begin{corollary}
\label{cor:90}
If $d=1,2$, then $\om>0$. If $\eps=-1$ and $\om>0$, then $p<p_{\max}$. 

If $\vec{b}=0$,  then $\om>0$ and  $p<p_{\max}$.
\end{corollary}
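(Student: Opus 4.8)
\textbf{Proof proposal for Corollary \ref{cor:90}.}

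The plan is to extract all the claimed conclusions purely algebraically from the three Pohozaev identities \eqref{app:10}, \eqref{app:20}, \eqref{app:30}, using only the positivity of the quantities $A=\|\Delta\phi\|^2$, $B=\eps\|\vec b\cdot\nabla\phi\|^2$ (when $\eps=1$), $C=\|\phi\|_{L^{p+1}}^{p+1}$, $D=\|\phi\|_{L^2}^2$, together with the fact that $\phi\not\equiv 0$ forces $A,C,D>0$. For the first claim, set $d=1$ or $d=2$. Then in \eqref{app:10} the coefficient $\frac{d(p-1)-2(p+1)}{2(p+1)}$ of $C$ is strictly negative: indeed $d(p-1)-2(p+1)\le 2(p-1)-2(p+1)=-4<0$. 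Since $A>0$ and $C>0$, the identity $A=\frac{d(p-1)-2(p+1)}{2(p+1)}C+\om D$ forces $\om D>A>0$, hence $\om>0$ because $D>0$.

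For the second claim, assume $\eps=-1$ and $\om>0$. Then $B=-\|\vec b\cdot\nabla\phi\|^2\le 0$. Reading \eqref{app:20} as $B=\frac{d(p-1)-4(p+1)}{2(p+1)}C+2\om D$ and using $\om D>0$, $C>0$, we get
\[
\frac{d(p-1)-4(p+1)}{2(p+1)}\,C = B - 2\om D < 0,
\]
so $d(p-1)-4(p+1)<0$, i.e. $d(p-1)<4(p+1)$, which rearranges to $p< 1+\frac{8}{d-4}$ when $d\ge 5$ (and is automatic when $d\le 4$); this is exactly $p<p_{\max}(d)$ in all cases. For the third claim, $\vec b=0$ gives $B=0$, and then \eqref{app:10} alone yields $\om D = A - \frac{d(p-1)-2(p+1)}{2(p+1)}C$. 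Here we need $\om>0$: feed $B=0$ into \eqref{app:20} to get $\frac{d(p-1)-4(p+1)}{2(p+1)}C + 2\om D = 0$; if $\om\le 0$ this would force $d(p-1)-4(p+1)\ge 0$, but then \eqref{app:30} with $B=0$ reads $(d(p-1)-4(p+1))A + \om d(p-1)D = 0$ with both terms nonnegative and the first strictly positive (since $A>0$), a contradiction — so $\om>0$, and then the $\eps=-1$ argument (with $B=0\le 0$) gives $p<p_{\max}$.

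The computations here are entirely routine sign-chasing; the only mild subtlety — and the step I would be most careful about — is making sure the $\vec b=0$ case genuinely produces $\om>0$ rather than merely assuming it, since \eqref{app:10} by itself does not decide the sign of $\om$ without knowing the sign of the net right-hand side. Using \eqref{app:30} together with \eqref{app:20} (both specialized to $B=0$) closes this gap cleanly, as sketched above. One should also double-check the degenerate borderline powers (e.g. $d(p-1)-4(p+1)=0$) to confirm the strict inequalities are not lost; in each case the strictness of $A>0$ or $C>0$ carries through, so $p<p_{\max}$ remains strict.
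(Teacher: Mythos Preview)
Your arguments for the first two claims are correct and match the paper's reasoning exactly. The third claim ($\vec b=0$), however, contains a genuine sign error that breaks the argument.

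You assume $\om\le 0$, correctly deduce $d(p-1)-4(p+1)\ge 0$ from \eqref{app:20} with $B=0$, and then read \eqref{app:30} (with $B=0$) as
\[
(d(p-1)-4(p+1))A+\om\, d(p-1)\,D=0,
\]
claiming ``both terms nonnegative and the first strictly positive.'' But $d(p-1)>0$ and $D>0$, so under $\om\le 0$ the second term $\om\, d(p-1)\,D$ is \emph{nonpositive}, not nonnegative. The identity is then perfectly consistent: one nonnegative and one nonpositive summand cancel, and no contradiction follows. More structurally, \eqref{app:30} is a linear combination of \eqref{app:10} and \eqref{app:20}, so once you have used \eqref{app:20} with $B=0$, invoking \eqref{app:30} with $B=0$ adds no independent constraint.

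The paper's route for the case $\vec b=0$, $d\ge 5$ is different from yours: it first extracts from \eqref{app:20} the dichotomy ``either $\om>0$ and $p<p_{\max}$, or $\om<0$ and $p>p_{\max}$,'' disposes of the second option immediately for $d\le 4$, and for $d\ge 5$ returns to \eqref{app:10} (not \eqref{app:30}) to argue that under $\om<0$, $p>p_{\max}>\tfrac{d+2}{d-2}$ both terms on the right of \eqref{app:10} are negative, contradicting $A>0$. If you rework your third part, you should follow this line through \eqref{app:10}; do check the sign of the coefficient $\tfrac{d(p-1)-2(p+1)}{2(p+1)}$ carefully when $p>\tfrac{d+2}{d-2}$, as this is the delicate point.
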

\begin{proof}
If $d=1,2$, the first term on the right of \eqref{app:10} is negative, forcing the positivity of the second term, so  $\om>0$. Next, from the relation \eqref{app:20}, we see that if $\om>0, \eps=-1$, then $\frac{d(p-1) - 4(p+1)}{2(p+1)}<0$, or $p<p_{\max}$. 

If $\vec{b}=0$,  it is clear from \eqref{app:20} that either $\om>0$ and $p<p_{\max}$ or $\om<0$ and $p>p_{\max}$ (the second one being  impossible immediately for $d=1,2,3,4$). For $d\geq 5$, assume for a moment that $\om<0$ and $p>p_{\max}=\f{d+4}{d-4}$. Let us look at \eqref{app:10}. The second term is now negative, while for the first term, since $p>p_{\max}>\f{d+2}{d-2}$, we also conclude its negativity. It follows that the right hand side of \eqref{app:10} is negative a contradiction. Thus, $\om>0$, $p<p_{\max}$. 
\end{proof}
As we see from the results of Corollary \ref{cor:90}, the Pohozaev's identities are by themselves not strong enough to derive necessary conditions on $\om, p$ that are close to the sufficient ones. 

We believe that indeed, the necessary conditions are close to the ones required by \cite{L} to construct solutions of the constrained minimization problem \eqref{Lev}. Namely, we expect $p<p_{\max}$ and $\om>\f{b^2}{4}$ for $b>0$   to be necessary for existence of localized and smooth solutions to \eqref{EqPokhozaev} and \eqref{NLS22}. Let us show that in fact, these follow from a natural assumption on the spectrum for the operator $\cl_+$, namely that zero cannot be an embedded eigenvalue in the continuous spectrum of $\cl_+$. Let us note that while for second order Schr\"odinger operators $\ch=-\De+V$, this is generally the case\footnote{That is point spectrum does not embed into the continuous one}  under decay conditions on $V$, this is not the case for their fourth order counterparts, \cite{Gold}. In physically relevant situations however (and the case of $\cl_+$ certainly merits this designation), embedded eigenvalues should not  exist. If this is the case for $\cl_+$, we see that since by Weyl's theorem 
$$
\si_{a.c.}[\cl_+]=\si_{a.c.}(\De^2+ b \De+\om - p |\phi|^{p-1})=\si_{a.c.}(\De^2+ b \De +\om)=\left\{
\begin{array}{cc}
\om - \f{b^2}{4}  & b\geq 0 \\
\om & b<0
\end{array}
\right..
$$
Clearly, if zero is not embedded, it must be that $\om$ satisfies 
$
\om\geq \left\{
\begin{array}{cc}
\f{b^2}{4}  & b\geq 0 \\
0 & b<0
\end{array}
\right..
$
If that holds, at least in the case $b<0$, it follows from Corollary \ref{cor:90} 
that $p<p_{\max}$ as well.

 \section{Variational construction in the one dimensional case}
 \label{sec:3}
 We start with some preparatory results. 
 \subsection{Variational problem: preliminary steps}
 \label{sec:3.1}
 We now discuss the variational problem \eqref{70}. 
It is certainly not {\it a priori} clear that for a given $\la>0$, such a value is finite (that is $m_b(\la)>-\infty$) and non-trivial (i.e. $m_b(\la)<0$).  In fact, in some cases, it is not finite, as we show below.  Note that 
\begin{equation}
\label{73}
 \frac{m_b(\la)}{\lambda} =\inf_{\left\lVert \phi \right\rVert_2^2=1}\left \{\frac{1}{2}\int_{\rone}|\phi''|^2-b|\phi'|^2dx-\frac{\la^{\frac{p-1}{2}}}{p+1}\int_{\rone }|\phi|^{p+1}dx\right \}=\inf_{\left\lVert \phi \right\rVert_2^2=1}J[\phi].
 \end{equation}
This is, clearly, a non-increasing function. In particular, $ \frac{m_b(\la)}{\lambda}$ is differentiable a.e. and so is $m_b(\la)$. 
 Our considerations naturally split in two case, $b>0$ and $b<0$. 
 \subsubsection{The case $b<0$} 
 In this section, we develop criteria (based on the parameters in the problem), which address the question for finiteness and non-triviality of $m_b(\la)$. The next lemma shows this  for $p\in (1,5)$ and in addition, it establishes  that  $m_b(\la)=-\infty$ for $p>9$. 
 \begin{lemma} 
 \label{ponefive}
 For  $ p\in (1,5), b<0 $,  $ -\infty<m_b(\la)<0 $ for all $ \lambda>0 $.  For  $p \geq 9 $ then $ m_b(\la)=-\infty $ for all $ \lambda>0 $.

\end{lemma}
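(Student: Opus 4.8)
The plan is to handle the two assertions separately, both via explicit scaling arguments applied to a fixed profile. For the first claim (finiteness and non-triviality when $p \in (1,5)$, $b<0$), I would first establish $m_b(\la) < 0$ by a test-function computation: fix any $\psi \in \cs(\rone)$ with $\|\psi\|_2^2 = \la$ and consider the rescaled family $\psi_\sigma(x) = \sigma^{1/2}\psi(\sigma x)$, which preserves the $L^2$ norm. Then $\int |\psi_\sigma''|^2 = \sigma^4 \int|\psi''|^2$, $\int|\psi_\sigma'|^2 = \sigma^2 \int|\psi'|^2$, and $\int|\psi_\sigma|^{p+1} = \sigma^{(p-1)/2}\int|\psi|^{p+1}$, so that
$$
I[\psi_\sigma] = \tfrac{1}{2}\sigma^4 \|\psi''\|_2^2 - \tfrac{b}{2}\sigma^2 \|\psi'\|_2^2 - \tfrac{1}{p+1}\sigma^{(p-1)/2}\|\psi\|_{p+1}^{p+1}.
$$
Since $b<0$, the middle term is $+\tfrac{|b|}{2}\sigma^2\|\psi'\|_2^2 > 0$; but as $\sigma \to 0^+$, the leading behavior is governed by the smallest power among $\{4, 2, (p-1)/2\}$. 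When $p < 5$ we have $(p-1)/2 < 2 < 4$, so $I[\psi_\sigma] \sim -\tfrac{1}{p+1}\sigma^{(p-1)/2}\|\psi\|_{p+1}^{p+1} \to 0^-$, giving $m_b(\la) < 0$.

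For the lower bound $m_b(\la) > -\infty$ when $p \in (1,5)$, I would use the GNS inequality \eqref{71} in the $d=1$ case, namely $\|\phi\|_{L^{p+1}} \le C \|\phi''\|_2^{\theta}\|\phi\|_2^{1-\theta}$ for the appropriate $\theta = \theta(p)$, combined with the fact that $-b\|\phi'\|_2^2 \ge 0$. This yields
$$
I[\phi] \ge \tfrac{1}{2}\|\phi''\|_2^2 - \tfrac{C}{p+1}\|\phi''\|_2^{(p+1)\theta}\la^{\frac{(p+1)(1-\theta)}{2}}.
$$
One checks by the scaling exponents that $(p+1)\theta < 2$ precisely when $p < 5$ (this is the energy-subcritical condition here), so the negative term grows slower than the quadratic term in $\|\phi''\|_2$, and Young's inequality bounds $I[\phi]$ below uniformly over $\|\phi\|_2^2 = \la$. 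Hence $m_b(\la) > -\infty$.

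For the second claim ($m_b(\la) = -\infty$ when $p \ge 9$), I would again use $\psi_\sigma$ but now send $\sigma \to +\infty$. Since $b < 0$ the quadratic term stays nonnegative, and we are comparing $\tfrac{1}{2}\sigma^4\|\psi''\|_2^2$ against $\tfrac{1}{p+1}\sigma^{(p-1)/2}\|\psi\|_{p+1}^{p+1}$. When $p \ge 9$ we have $(p-1)/2 \ge 4$, so the nonlinear term dominates (or ties, in which case one picks $\psi$ with $\|\psi\|_{p+1}^{p+1}$ large enough, or uses a second scaling parameter in amplitude), forcing $I[\psi_\sigma] \to -\infty$. The main subtlety — really the only place requiring care — is the borderline case $p=9$ where the two powers coincide at $\sigma^4$: there I would introduce an additional amplitude/width scaling $\psi_{\sigma,A}(x) = A\sigma^{1/2}\psi(\sigma x)$ with the constraint $A^2 = 1$ handled by keeping $\|\psi\|_2^2 = \la$ fixed and instead letting $\sigma \to \infty$ after first choosing $\psi$ so that $\tfrac{1}{2}\|\psi''\|_2^2 < \tfrac{1}{10}\|\psi\|_{10}^{10}$ (possible since the ratio is not bounded below — this is exactly the failure of GNS at the critical exponent), making the $\sigma^4$ coefficient strictly negative. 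I expect this critical-exponent bookkeeping at $p=9$, together with confirming the exponent threshold $p=5$ in the GNS step, to be the only nonroutine points; everything else is direct scaling analysis.
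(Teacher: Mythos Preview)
Your approach coincides with the paper's: the scaling family $\psi_\sigma(x)=\sigma^{1/2}\psi(\sigma x)$ and the GNS lower bound are exactly what the paper uses. Two points require correction, however.

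First, your threshold computation is off. With $\theta = \tfrac{1}{4}-\tfrac{1}{2(p+1)} = \tfrac{p-1}{4(p+1)}$ one has $(p+1)\theta = \tfrac{p-1}{4}$, so $(p+1)\theta < 2$ holds precisely for $p<9$, not $p<5$. The paper states this correctly and uses it to get $m_b(\la)>-\infty$ on the full range $p\in(1,9)$, which is needed later for the intermediate regime $p\in[5,9)$. Your version still covers the lemma's first claim, but understates the range.

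Second, your handling of the borderline $p=9$ has a genuine gap. You propose to choose $\psi$ with $\|\psi\|_2^2=\la$ and $\tfrac12\|\psi''\|_2^2 < \tfrac{1}{10}\|\psi\|_{10}^{10}$, asserting this is ``possible since the ratio is not bounded below --- this is exactly the failure of GNS at the critical exponent.'' But GNS does \emph{not} fail at $p=9$; it is scale-invariant there and gives $\|\psi\|_{10}^{10}\le C\|\psi''\|_2^2\|\psi\|_2^8 = C\la^4\|\psi''\|_2^2$, so the ratio $\|\psi\|_{10}^{10}/\|\psi''\|_2^2$ is bounded \emph{above} by $C\la^4$ on the constraint set, and for small $\la$ no such $\psi$ exists. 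The paper avoids this obstruction by arguing indirectly: it assumes $m_b(\la)\in(-\infty,0)$, takes a near-minimizer $\phi$ with $I[\phi]<\tfrac{m_b(\la)}{2}<0$ (so the needed inequality is automatic), and then scales $\phi_N=N^{1/2}\phi(N\,\cdot)$ using $b<0$ to get $I[\phi_N]\le N^4 I[\phi]\to-\infty$, a contradiction. The point is that the test function is obtained from the assumed negativity of $m_b(\la)$ rather than constructed directly.
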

 \begin{proof}
 Let $ \phi_\ve(x)=\ve^{1/2}\phi(\ve x) $, where $ \left\lVert \phi \right\rVert_2^2=\lambda $. We have that 
\begin{equation} \label{iphiepsilon}
I[\phi_\ve]=  \frac{\|\phi''\|_{L^2}^2}{2}\ve^4 -\frac{b \|\phi'\|_{L^2}^2}{2}  \ve^2- 
\frac{\|\phi\|_{L^{p+1}}^{p+1}}{p+1}\ve^{\frac{p-1}{2}}. 
\end{equation}
 Since $ 0<\frac{p-1}{2}<2 $ for $ 1<p<5 $, we see that $ m_b(\la)<0 $ in this case by choosing $ \ve $ small enough. On the other hand, if $p>9$, it is clear that $\lim_{\ve\to \infty} I[\phi_\ve]=-\infty$, whence $m_b(\la)=-\infty$ in this case. 
 
By the GNS inequality 
 \begin{equation}
 \label{f:91} 
  \|\phi\|_{L^{p+1}(\rone)}\leq C_p \|\phi\|_{\dot{H}^{\f{1}{2}-\f{1}{p+1}}}\leq C_p \|\phi\|_{L^2}^{\f{3}{4}+\f{1}{2(p+1)}}  \|\phi''\|_{L^2}^{\f{1}{4}-\f{1}{2(p+1)}}, 
 \end{equation}
 we have  
\begin{eqnarray*}
I[\phi] &=& \frac{1}{2}\int_{\rone}|\phi''|^2-b|\phi'|^2dx-\frac{1}{p+1}\int_{\rone}|\phi|^{p+1}dx \\ 
&\geq & \frac{1}{2}\int_{\rone}|\phi''|^2-b|\phi'|^2dx -c_p\|\phi''\|_{L^2}^{\frac{p-1}{4}}\|\phi \|_{L^2}^{p+1-\frac{p-1}{4}} \\
&\geq & \frac{1}{4} \| \phi''\|_{L^2}^2- c_{p,\lambda,b}(\|\phi'' \|_{L^2}^{\frac{p-1}{4}}+1) \geq -\gamma,
\end{eqnarray*} 
for some $ \gamma>0$ because the function $ g(x)=\frac{1}{2}x^2-c_{p,\lambda}x^{\frac{p-1}{4}} $, clearly, has a negative minimum on $ [0,\infty) $ for $ p\in(1,9) $. Therefore, $ m_b(\la) \geq-\gamma>-\infty $ for $ p\in (1,9) $.
Letting $ \ve\to\infty $  in \eqref{iphiepsilon} shows  that $ m_b(\la)=-\infty  $ for $ p>9 $.

Consider now the case $p=9$. Clearly, for large $\la$, $m_b(\la)<0$, as it is evident from the formula \eqref{73}. Assuming that $m_b(\la)\in (-\infty, 0)$ for some $\la$, let $\phi$ be such that $m_b(\la)\leq I[\phi]< \f{m_b(\la)}{2}$.  Using $\phi_N$ as in the formula \eqref{iphiepsilon}, we see that $\|\phi_N\|_{L^2}^2=\la$, while for $N\geq 1$, we have 
$$
I[\phi_N]= N^4[\frac{\|\phi''\|_{L^2}^2}{2} -\frac{b \|\phi'\|_{L^2}^2}{2 N^2}   - 
\frac{\|\phi\|_{L^{10}}^{10}}{10}]\leq N^4[\frac{\|\phi''\|_{L^2}^2}{2} -\frac{b \|\phi'\|_{L^2}^2}{2}   - 
\frac{\|\phi\|_{L^{10}}^{10}}{10}] \leq N^4  \f{m_b(\la)}{2}
$$
But then 
$$
m_b(\la)\leq \liminf_N I[\phi_N]=-\infty, 
$$
a contradiction.

 \end{proof}
 Our next lemma shows that for $p\in [5,9)$, there is a threshold value $\la_p>0$, below which $m_b(\la)$ is trivial.  
 \begin{lemma}
 \label{le:2}
If $b<0$ and $p\in [5,9) $,  then there exists a finite number $ \la_p>0$ such that
\begin{itemize}
\item  for all $ \la\leq\la_p $ we have  $ m_b(\la)=0 $,
\item  for all $ \la>\la_p $  we have $ -\infty<m_b(\la)<0 $. 
\end{itemize} 
\end{lemma}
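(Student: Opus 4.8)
The plan is to establish the three claims of Lemma \ref{le:2} via a scaling-and-truncation analysis of the quantity $J[\phi]$ appearing in \eqref{73}. Recall that $m_b(\la)/\la=\inf_{\|\phi\|_2^2=1}J[\phi]$, where
\[
J[\phi]=\frac{1}{2}\int_{\rone}\big(|\phi''|^2-b|\phi'|^2\big)dx-\frac{\la^{\frac{p-1}{2}}}{p+1}\int_{\rone}|\phi|^{p+1}dx.
\]
Since $b<0$, the quadratic part $\frac12\int(|\phi''|^2-b|\phi'|^2)dx$ is nonnegative, so $J[\phi]\ge 0$ unless the $L^{p+1}$ term is large enough to overcome it. From the GNS inequality for $p\in[5,9)$ — the analogue of \eqref{f:91}, namely $\|\phi\|_{L^{p+1}}^{p+1}\le C_p\|\phi''\|_{L^2}^{\frac{p-1}{4}}\|\phi\|_{L^2}^{p+1-\frac{p-1}{4}}$ with exponent $\frac{p-1}{4}\ge 1$ — one gets, on the constraint $\|\phi\|_2^2=1$, the bound $J[\phi]\ge \frac12\|\phi''\|_{L^2}^2-C_p\la^{\frac{p-1}{2}}\|\phi''\|_{L^2}^{\frac{p-1}{4}}$. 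When $p=5$ the exponent is exactly $2$, so for $\la$ small the coefficient $C_p\la^2<\frac12$ forces $J\ge 0$; when $p\in(5,9)$ the exponent $\frac{p-1}{4}>2$, and the right-hand side $\frac12 x^2-C_p\la^{\frac{p-1}{2}}x^{\frac{p-1}{4}}$, viewed as a function of $x=\|\phi''\|_{L^2}\ge 0$, has nonnegative infimum precisely when $\la$ is below some threshold. This shows there is a $\la_p>0$ with $m_b(\la)=0$ for $\la\le\la_p$. (In fact $m_b(\la)=0$ is attained in the limit by spreading mass out, via $\phi_\ve$ with $\ve\to 0$ as in \eqref{iphiepsilon}, so $m_b(\la)\le 0$ always, hence $=0$ here.)

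Next I would show $m_b(\la)<0$ for all sufficiently large $\la$: fix any admissible $\phi$ with $\|\phi\|_2^2=1$; then $J[\phi]\to-\infty$ as $\la\to\infty$ because the $-\frac{\la^{(p-1)/2}}{p+1}\|\phi\|_{L^{p+1}}^{p+1}$ term dominates. Combined with Lemma \ref{ponefive}'s argument (the function $g(x)=\frac12 x^2-c_{p,\la,b}x^{\frac{p-1}{4}}$ has finite negative minimum for $p<9$, now with $\la$-dependent constant), we also get $m_b(\la)>-\infty$ for every $\la$ when $p<9$. So for large $\la$ we have $-\infty<m_b(\la)<0$. It remains to upgrade "for all small $\la$" and "for all large $\la$" to a single clean threshold: define $\la_p:=\sup\{\la>0:m_b(\la)=0\}$. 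Monotonicity is the crucial structural input — I would argue that $\la\mapsto m_b(\la)/\la$ is non-increasing (this is immediate from \eqref{73} since $\la^{(p-1)/2}$ is increasing in $\la$, so the infimand decreases pointwise), hence if $m_b(\la_0)<0$ for some $\la_0$ then $m_b(\la)/\la\le m_b(\la_0)/\la_0<0$ for all $\la>\la_0$, giving $m_b(\la)<0$; conversely $m_b(\la)=0$ for $\la\le\la_p$. This yields exactly the dichotomy claimed.

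The one point requiring genuine care — the main obstacle — is the boundary behaviour at $\la=\la_p$ and, more substantively, ruling out that $m_b(\la)=0$ could fail to be "stable" near the threshold in a way that breaks monotonicity of the set $\{m_b=0\}$. The monotonicity of $\la\mapsto m_b(\la)/\la$ resolves this cleanly, so the real work is just the GNS computation that pins down the threshold's existence (that $\frac12 x^2-C\la^{\frac{p-1}{2}}x^{\frac{p-1}{4}}\ge 0$ on $[0,\infty)$ iff $\la\le\la_p$ for an explicit $\la_p$, using $\frac{p-1}{4}\ge 2$), plus verifying $m_b(\la)\le 0$ always via the dilation $\phi_\ve$, $\ve\to 0^+$, in \eqref{iphiepsilon} (valid since $\frac{p-1}{2}>2$ for $p>5$ makes the $\ve^{(p-1)/2}$ term subdominant — wait, here one must instead send $\ve\to 0$ so that $\ve^2,\ve^4$ terms vanish faster only if $\frac{p-1}{2}<2$; for $p\ge 5$ one spreads mass instead, $\phi_\ve(x)=\ve^{1/2}\phi(\ve x)$ with $\ve\to 0^+$ gives $I[\phi_\ve]\to 0^+$ from the $\ve^2$ term dominating, so $m_b(\la)\le 0$, and non-positivity combined with the GNS lower bound at small $\la$ forces $m_b(\la)=0$). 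I would double-check this last scaling sign carefully, as it is the only place the case $p\ge 5$ differs qualitatively from $p<5$.
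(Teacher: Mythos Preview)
Your argument has a genuine gap at the key step: showing $m_b(\la)=0$ for small $\la$. You claim that on the unit $L^2$ sphere the GNS bound $\|\phi\|_{L^{p+1}}^{p+1}\le C_p\|\phi''\|_{L^2}^{(p-1)/4}$ reduces the problem to checking that $g(x)=\tfrac12 x^2-C_p\la^{(p-1)/2}x^{(p-1)/4}$ has nonnegative infimum on $[0,\infty)$ for small $\la$, and you assert this holds because ``$\tfrac{p-1}{4}>2$'' (and ``exactly $2$'' at $p=5$). This is simply false: for $p\in[5,9)$ one has $\tfrac{p-1}{4}\in[1,2)$, strictly \emph{below} $2$. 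Consequently, for \emph{every} $c>0$ the function $\tfrac12 x^2-cx^{(p-1)/4}$ has a strictly negative minimum (the subquadratic term dominates near $x=0$), so your lower bound never forces $J[\phi]\ge 0$, no matter how small $\la$ is. The single GNS estimate through $\|\phi''\|$ is not sufficient.

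What the paper does instead is prove the linear-in-quadratic-form bound
\[
\|\phi\|_{L^{p+1}}^{p+1}\le c_p\,\|\phi\|_{L^2}^{p-1}\int_{\rone}\big(|\phi''|^2-b|\phi'|^2\big)\,dx,
\]
which immediately yields $J[\phi]\ge 0$ once $\la^{(p-1)/2}\le (p+1)/c_p$. The crucial point is that this requires \emph{two} GNS inequalities, one through $\|\phi''\|$ giving exponent $\tfrac{p-1}{8}\le 1$ on the quadratic form, and one through $\|\phi'\|$ giving exponent $\tfrac{p-1}{4}\ge 1$; interpolating between them produces exponent exactly $1$. Your argument discards the $|b|\,\|\phi'\|^2$ term and thereby loses precisely the leverage needed. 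The remaining pieces of your outline (that $m_b(\la)\le 0$ via the dilation $\phi_\ve$ with $\ve\to 0^+$, finiteness of $m_b$, strict negativity for large $\la$, and the monotonicity of $\la\mapsto m_b(\la)/\la$ to produce a clean threshold) are correct and match the paper's approach.
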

 \begin{proof}
Take $ \phi_\ve $ as in Lemma \ref{ponefive} with $ \left\lVert \phi \right\rVert_2^2=1 $. We have 
\begin{equation}\label{JleqZero}
 \frac{m_b(\la)}{\lambda}\leq\lim_{\ve\to 0}J[\phi_\ve]= 0.
\end{equation}
which implies that $ m_b(\la) \leq 0$.  Now, we are going to show that for each $ p\in [5,9] $ there exists a constant $ c_p>0 $ such that 
\begin{equation}
\label{90}
\inf_{\phi\neq0}\frac{\left\lVert\phi \right\rVert_2^{p-1}\left(\int_{\rone}|\phi''|^2-b|\phi'|^2dx\right)}{\int_{\rone}|\phi|^{p+1}dx}\geq c_p. 
\end{equation}
Using the GNS inequality \eqref{71}, we  get the following estimates for the $ L^{p+1} $ norm: 

\begin{align}
\left \lVert \phi \right\rVert_{p+1}^{p+1}
& \leq a_p \left\lVert \phi'' \right\rVert_2^{\frac{p-1}{4}}\left\lVert \phi \right\rVert_2^{\frac{3p+5}{4}}\nonumber\\
&\label{pplusoneest1}\leq a_p \left(\int_{\rone}|\phi''|^2-b|\phi'|^2dx\right)^{\frac{p-1}{8}}\left\lVert \phi \right\rVert_2^{\frac{3p+5}{4}},  
 \end{align}
 and
\begin{align}
\left \lVert \phi \right\rVert_{p+1}^{p+1}
& \leq b_p \left\lVert \phi' \right\rVert_2^{\frac{p-1}{2}}\left\lVert \phi \right\rVert_2^{\frac{3p+5}{4}}\nonumber\\
&\label{pplusoneest2}\leq b_p \left(\int_{\rone}|\phi''|^2-b|\phi'|^2dx\right)^{\frac{p-1}{4}}\left\lVert \phi \right\rVert_2^{\frac{p+3}{2}}. 
\end{align}

Note that for $ p\in[5,9)$, we have that $\f{p-1}{8}< 1\leq \f{p-1}{4}$. Therefore, interpolating between estimates \eqref{pplusoneest1} and \eqref{pplusoneest2} we get 
$$
\|\phi\|_{L^{p+1}}^{p+1} \leq  c_p \|\phi \|_{L^2}^{p-1} \int_{\rone}|\phi''|^2-b|\phi'|^2dx.
 $$
Thus we have that for all $ \phi\in H^2 $ with $ \left\lVert \phi \right\rVert_2^2=1 $ 
$$
 \int_{\rone}|\phi''|^2-b|\phi'|^2dx -\f{1}{c_p} \int_{\rone}|\phi|^{p+1}dx\geq0, 
 $$
this implies that for  $\la:  0<\lambda\leq \gamma_p=\left(\f{p+1}{c_p}\right)^{\frac{2}{p-1}} $, $ J[\phi]\geq0 $, which together with \eqref{JleqZero} implies that $ m_b(\la)=0 $. 

Observe that for a very large $\la$, the quantity 
$$
\inf_{\left\lVert \phi \right\rVert_2^2=1}\left \{\frac{1}{2}\int_{\rone}|\phi''|^2-b|\phi'|^2dx-\frac{\la^{\frac{p-1}{2}}}{p+1}\int_{\rone }|\phi|^{p+1}dx\right \}
$$
is strictly negative\footnote{which can be seen by fixing $\phi$ in the infimum and taking $\la>\la(\phi)$}, so $\la_p<\infty$. Clearly,  $ \lambda_p=\sup\{\gamma>0:\, m_b(\la)=0 \mbox{ for all } \lambda\leq \gamma\} $. 

\end{proof}

 \begin{lemma}\label{subslemma1}
Suppose $ b<0 $, $ 1<p<9 $ and $ -\infty<m_b(\la)<0 $. Let $ \phi_k $ be a minimizing sequence. Then, there exists a subsequence $ \phi_k $ such that:
$$
 \int_{\rone} |\phi_k''(x)|^2 dx\to L_1,\  \int_{\rone}|\phi_k'(x)|^2dx \to L_2, \ \int_{\rone}|\phi_k(x)|^{p+1}dx\to L_3,  
$$
where $ L_1>0 $, $ L_2> 0 $ and $ L_3> 0 $.
\end{lemma}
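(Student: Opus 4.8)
The plan is to extract, along a subsequence, the three limits $L_1,L_2,L_3$ simply by showing the minimizing sequence $\phi_k$ is bounded in $H^2(\rone)$, and then to rule out that any of the three limits vanishes. Boundedness in $H^2$ follows from the argument already used in Lemma \ref{ponefive}: since $-\infty<m_b(\la)<0$ and $p\in(1,9)$, the coercivity estimate
$$
I[\phi]\geq \tfrac14\|\phi''\|_{L^2}^2 - c_{p,\la,b}\bigl(\|\phi''\|_{L^2}^{(p-1)/4}+1\bigr)
$$
shows that $I[\phi_k]\to m_b(\la)>-\infty$ forces $\sup_k\|\phi_k''\|_{L^2}<\infty$; combined with $\|\phi_k\|_{L^2}^2=\la$ and interpolation ($\|\phi_k'\|_{L^2}^2\le\|\phi_k\|_{L^2}\|\phi_k''\|_{L^2}$), this gives $\sup_k\|\phi_k\|_{H^2}<\infty$. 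Hence $\int|\phi_k''|^2$, $\int|\phi_k'|^2$ are bounded, and $\int|\phi_k|^{p+1}$ is bounded by the GNS inequality \eqref{f:91}. Passing to a subsequence, all three integrals converge to nonnegative numbers $L_1,L_2,L_3\geq0$; it remains to show each is strictly positive.

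First I would show $L_3>0$. If $L_3=0$, then $\int|\phi_k|^{p+1}\to0$, so $I[\phi_k]=\tfrac12\int(|\phi_k''|^2-b|\phi_k'|^2)\,dx - \tfrac1{p+1}\int|\phi_k|^{p+1}\,dx\to L_1 \tfrac12 - b L_2\tfrac12 \ge 0$ (using $b<0$), contradicting $I[\phi_k]\to m_b(\la)<0$. Hence $L_3>0$. Next, $L_1>0$: if $L_1=0$, then $\|\phi_k''\|_{L^2}\to0$, and then by the GNS inequality \eqref{f:91}, $\|\phi_k\|_{L^{p+1}}\le C_p\|\phi_k\|_{L^2}^{(3/4)+1/(2(p+1))}\|\phi_k''\|_{L^2}^{(1/4)-1/(2(p+1))}\to0$ since the exponent of $\|\phi_k''\|_{L^2}$ is positive for $p>1$; this forces $L_3=0$, contradicting the previous step. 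Hence $L_1>0$. Finally, $L_2>0$: if $L_2=0$ then $\|\phi_k'\|_{L^2}\to0$, and by the interpolation/GNS inequality \eqref{pplusoneest2}-type bound $\|\phi_k\|_{L^{p+1}}^{p+1}\le b_p\|\phi_k'\|_{L^2}^{(p-1)/2}\|\phi_k\|_{L^2}^{(3p+5)/4}\to0$, again forcing $L_3=0$, a contradiction. Thus $L_1,L_2,L_3>0$.

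The only step requiring a little care is the strict positivity of $L_2$: one must make sure the relevant GNS inequality bounding $\|\phi\|_{L^{p+1}}$ by a positive power of $\|\phi'\|_{L^2}$ (times powers of $\|\phi\|_{L^2}$) is valid for the full range $p\in(1,9)$ — indeed the estimate $\|\phi\|_{L^{p+1}}\lesssim \|\phi'\|_{L^2}^{\theta}\|\phi\|_{L^2}^{1-\theta}$ with $\theta=\tfrac12-\tfrac1{p+1}\in(0,1)$ holds for all $p>1$ in one dimension, so this is fine. I expect the main (minor) obstacle to be simply organizing the three implications so they do not circularly depend on each other; the clean order is $L_3>0$ first (using $b<0$ and $m_b(\la)<0$), then $L_1>0$ and $L_2>0$ each derived independently from $L_3>0$ via the appropriate GNS estimate. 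No deep new ingredient is needed beyond the estimates already established in Lemmas \ref{ponefive} and \ref{le:2}.
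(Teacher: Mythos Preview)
Your proposal is correct and follows essentially the same approach as the paper: establish $H^2$-boundedness via the coercivity estimate \eqref{300}, extract convergent subsequences, rule out $L_3=0$ using $b<0$ and $m_b(\la)<0$, and then rule out $L_1=0$ and $L_2=0$ via GNS/Sobolev since either would force $L_3=0$. The paper compresses the last two steps into a single sentence (``By Sobolev embedding, neither $L_1$ nor $L_2$ could be zero, as this would force $L_3=0$''), whereas you spell out the specific GNS inequalities used, but the logic is identical.
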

 \begin{proof}
 We have already established in Lemma \ref{ponefive}  that 
 \begin{equation}
 \label{300}
 I[\phi]\geq \frac{1}{4} \| \phi''\|_{L^2}^2- c_{p,\lambda,b}(\|\phi'' \|_{L^2}^{\frac{p-1}{4}}+1). 
 \end{equation}
Since, $\phi_k$ is minimizing, it follows that the sequence $ \{\int_{\rone}|\phi_k''(x)|^2 dx\}_k $ is bounded.  By GNS inequality, the sequences 
$ \{ \int_{\rone}|\phi_k'(x)|^2 dx\}_k $ and $  \int_{\rone}|\phi_k(x)|^{p+1}dx\}_k $ are bounded as well. Passing to a subsequence a couple of times we get a subsequence $\{\phi_k\} $ such that all of the above sequences converge. We claim that $ L_3 $ cannot be zero.  Indeed, otherwise, 
$$
m_b(\la)=\lim_k [\frac{1}{2}\int_{\rone} |\phi_k''(x)|^2 dx - \frac{b}{2}\int_{\rone}|\phi_k'(x)|^2dx]\geq 0
$$
which is a contradiction with the fact that $m_b(\la)<0$.   
By Sobolev embedding, neither $L_1$ nor $L_2$ could be zero, as this would force $L_3=0$, which we have shown to be impossible.

\end{proof}
 
 \subsubsection{ The case $b >0$. }
 \begin{lemma}
 	\label{le:k0} 
If $ b>0 $  and $ 1<p<9 $, then $ -\infty<m_b(\la)<0 $ for all $ \la>0 $.
\end{lemma}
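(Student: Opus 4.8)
The plan is to mirror the structure of the $b<0$ argument but exploit the fact that for $b>0$ the quadratic form $\int_\rone (|\phi''|^2 - b|\phi'|^2)\,dx$ no longer controls $\|\phi''\|_{L^2}^2$ by itself. First I would establish $m_b(\la)<0$: take the rescaled family $\phi_\ve(x)=\ve^{1/2}\phi(\ve x)$ with $\|\phi\|_{L^2}^2=\la$, for which, as in \eqref{iphiepsilon},
\[
I[\phi_\ve]= \frac{\|\phi''\|_{L^2}^2}{2}\ve^4 -\frac{b \|\phi'\|_{L^2}^2}{2}\ve^2 - \frac{\|\phi\|_{L^{p+1}}^{p+1}}{p+1}\ve^{\frac{p-1}{2}}.
\]
Since $b>0$, the $\ve^2$ term is now \emph{negative} for small $\ve$, so for $1<p<9$ (in fact any $p>1$) we have $I[\phi_\ve]<0$ once $\ve$ is small enough, giving $m_b(\la)<0$ for every $\la>0$. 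This is the easy half and requires no smallness of $\la$, which is exactly why the $b>0$ case is cleaner at this point than $b<0$ with $p\in[5,9)$.

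The substantive half is $m_b(\la)>-\infty$, i.e. a uniform lower bound for $I$ on the constraint sphere. Here the difficulty is that $\int(|\phi''|^2-b|\phi'|^2)$ can be small or even negative while $\|\phi''\|_{L^2}$ is large, so I cannot simply quote the $g(x)=\tfrac12 x^2 - c x^{(p-1)/4}$ coercivity argument from Lemma \ref{ponefive}. The fix is to absorb the bad middle term: by interpolation (or Young's inequality on the Fourier side), for any $\eta>0$,
\[
b\|\phi'\|_{L^2}^2 \le b\,\|\phi''\|_{L^2}\,\|\phi\|_{L^2} \le \eta \|\phi''\|_{L^2}^2 + \frac{b^2}{4\eta}\|\phi\|_{L^2}^2 = \eta \|\phi''\|_{L^2}^2 + \frac{b^2 \la}{4\eta},
\]
so that $\frac12\int(|\phi''|^2-b|\phi'|^2)\ge \frac{1-\eta}{2}\|\phi''\|_{L^2}^2 - \frac{b^2\la}{8\eta}$. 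Choosing $\eta=\tfrac12$ recovers a genuine $\tfrac14\|\phi''\|_{L^2}^2$ term at the cost of an additive constant depending on $b,\la$. Then, exactly as in Lemma \ref{ponefive}, the GNS estimate \eqref{f:91} bounds $\|\phi\|_{L^{p+1}}^{p+1}\le c_p \|\phi''\|_{L^2}^{(p-1)/4}\|\phi\|_{L^2}^{p+1-(p-1)/4}$, and since $\tfrac{p-1}{4}<2$ for $p<9$ the function $x\mapsto \tfrac14 x^2 - c_{p,\la,b}(x^{(p-1)/4}+1)$ has a finite negative minimum on $[0,\infty)$. This yields $I[\phi]\ge -\gamma$ for some $\gamma=\gamma(p,\la,b)>0$, hence $m_b(\la)>-\infty$.

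I expect the only real obstacle to be the coercivity-modulo-lower-order-terms step just described — once the $-b\|\phi'\|_{L^2}^2$ term is controlled by splitting off a fraction of $\|\phi''\|_{L^2}^2$ plus an $L^2$-controlled remainder, the remainder of the proof is a verbatim repetition of the $b<0$ case. One should double-check that the edge power $p=9$ is genuinely excluded (the $\ve^{(p-1)/2}$ exponent then equals $\ve^4$, matching the leading term, and letting $\ve\to\infty$ forces $m_b(\la)=-\infty$ unless a separate limiting argument as in Lemma \ref{ponefive} applies — but the statement here restricts to $1<p<9$, so this is not an issue). No compactness or minimizing-sequence analysis is needed for this particular lemma; it is purely the well-posedness (finiteness and non-triviality) of the variational problem \eqref{70} for $b>0$.
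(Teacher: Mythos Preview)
Your proposal is correct and follows essentially the same approach as the paper. The paper's proof is even terser---it simply cites the estimate \eqref{300} for the lower bound and the scaling \eqref{iphiepsilon} for the upper bound---but your explicit absorption of $b\|\phi'\|_{L^2}^2$ via $\|\phi'\|_{L^2}^2\le\|\phi''\|_{L^2}\|\phi\|_{L^2}$ and Young's inequality is precisely what makes the constant $c_{p,\la,b}$ in \eqref{300} legitimate for $b>0$, so you have filled in the step the paper leaves implicit.
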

\begin{proof}
Since $ 0<\frac{p-1}{2} < 4$, the dominant term in \eqref{iphiepsilon} is 
$\max(\ve^2, \ve^{\f{p-1}{2}})$, so if we just take $ \ve  $ small enough, we see that $ m_b(\la)<0 $. Boundedness from below follows from \eqref{300}. 
\end{proof}

\begin{lemma}
\label{le:GNS}
Let $p: 1<p<5$, $b>0$ and fix a constant $c$.  Then, the inequality 
\begin{equation}
\label{310}
 \norm{\phi}_{L^{p+1}}^{p+1}\leq c \norm{\phi}_{L^2}^{p-1}\left[\int_{\rone}|\phi''(x)|^2-b |\phi'(x)|^2+\frac{b^2}{4}|\phi(x)|^2dx\right]. 
 \end{equation}
 \underline{ cannot hold} for all $\phi\in H^2(\rone)$. 
 
 For $p\in [5,9]$, $b>0$, there is a $c_{b,p}$, so that 
 \begin{equation}
\label{311}
 \norm{\phi}_{L^{p+1}}^{p+1}\leq c \norm{\phi}_{L^2}^{p-1}\left[\int_{\rone}|\phi''(x)|^2-b |\phi'(x)|^2+\frac{b^2}{4}|\phi(x)|^2dx\right]. 
 \end{equation}

\end{lemma}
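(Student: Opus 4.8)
The plan is built around the observation that the quadratic form on the right,
$Q_b[\phi]:=\int_{\rone}\big(|\phi''|^2-b|\phi'|^2+\f{b^2}{4}|\phi|^2\big)\,dx$,
equals, after completing the square on the Fourier side, $\|(-\p_x^2-\f b2)\phi\|_{L^2}^2$; in particular $Q_b\ge 0$, and it \emph{degenerates} exactly at the two resonant frequencies $\pm\xi_0$ determined by $(2\pi\xi_0)^2=\f b2$ (here $b>0$ is essential). The whole statement is about how strongly a function living near $\pm\xi_0$ can exploit this degeneracy.

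For $1<p<5$ I would exhibit an explicit family violating \eqref{310}. Fix $0\neq\psi\in\cs(\rone)$ and put $\phi_n(x):=\cos(2\pi\xi_0 x)\,\psi(x/n)$. Averaging a periodic factor against a slowly varying one gives $\|\phi_n\|_{L^2}^2=\f{n}{2}\|\psi\|_{L^2}^2(1+o(1))$ and $\|\phi_n\|_{L^{p+1}}^{p+1}=\kappa_p\,n\,\|\psi\|_{L^{p+1}}^{p+1}(1+o(1))$, where $\kappa_p=\f1\pi\int_0^\pi|\cos\theta|^{p+1}\,d\theta>0$. On the other hand, since $(2\pi\xi_0)^2=\f b2$ the resonant leading term cancels in $-\p_x^2\phi_n-\f b2\phi_n$, leaving only terms carrying factors $\f1n$ and $\f1{n^2}$, so $Q_b[\phi_n]=\|-\p_x^2\phi_n-\f b2\phi_n\|_{L^2}^2=O(\f1n)$. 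Hence $\f{\|\phi_n\|_{L^{p+1}}^{p+1}}{\|\phi_n\|_{L^2}^{p-1}Q_b[\phi_n]}\gtrsim n^{(5-p)/2}\to\infty$, which rules out \eqref{310} for every fixed $c$.

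For $5\le p\le 9$ I would localize in frequency. Pick disjoint smooth cutoffs $\eta_\pm$ with $\eta_\pm\equiv 1$ near $\pm\xi_0$ and $\mathrm{supp}\,\eta_\pm\subset\{|\xi\mp\xi_0|<\rho\}$ for a small $\rho$, and split $\phi=\phi_{\mathrm{far}}+\phi_++\phi_-$ with $\widehat{\phi_\pm}=\eta_\pm\widehat\phi$ and $\widehat{\phi_{\mathrm{far}}}=(1-\eta_+-\eta_-)\widehat\phi$. All three multipliers are bounded by $1$, so each piece obeys $\|\cdot\|_{L^2}\le\|\phi\|_{L^2}$ and $Q_b[\cdot]\le Q_b[\phi]$, and $\|\phi\|_{L^{p+1}}^{p+1}\lesssim_p\|\phi_{\mathrm{far}}\|_{L^{p+1}}^{p+1}+\|\phi_+\|_{L^{p+1}}^{p+1}+\|\phi_-\|_{L^{p+1}}^{p+1}$; it suffices to bound each summand by $\|\phi\|_{L^2}^{p-1}Q_b[\phi]$. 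On $\mathrm{supp}(1-\eta_+-\eta_-)$ the symbol ratio $\big(1+(2\pi\xi)^4\big)/\big((2\pi\xi)^2-\f b2\big)^2$ is bounded ($O(1)$ near infinity, and the zeros $\pm\xi_0$ have been deleted), whence $\|\phi_{\mathrm{far}}''\|_{L^2}^2+\|\phi_{\mathrm{far}}\|_{L^2}^2\le C_b\,Q_b[\phi]$; feeding this into the GNS estimate \eqref{pplusoneest1} and splitting $\|\phi_{\mathrm{far}}\|_{L^2}^{\f{3p+5}{4}}=\|\phi_{\mathrm{far}}\|_{L^2}^{p-1}\cdot\|\phi_{\mathrm{far}}\|_{L^2}^{\f{9-p}{4}}$ (legitimate exactly because $p\le 9$), using $\|\phi_{\mathrm{far}}\|_{L^2}\le\|\phi\|_{L^2}$ on the first factor and $\|\phi_{\mathrm{far}}\|_{L^2}^2\le C_bQ_b[\phi]$ on everything else, gives $\|\phi_{\mathrm{far}}\|_{L^{p+1}}^{p+1}\le C_{b,p}\|\phi\|_{L^2}^{p-1}Q_b[\phi]$.

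For the resonant pieces, set $u:=e^{-2\pi i\xi_0 x}\phi_+$, so that $\widehat u$ is supported in $\{|\xi|<\rho\}$ (band-limited), $|u|=|\phi_+|$, and $\|u'\|_{L^2}^2=4\pi^2\int(\xi-\xi_0)^2|\widehat{\phi_+}|^2\,d\xi$. Since $(2\pi\xi)^2-\f b2=4\pi^2(\xi-\xi_0)(\xi+\xi_0)$ and $\xi+\xi_0$ stays bounded away from $0$ on $\mathrm{supp}\,\eta_+$, this yields $Q_b[\phi_+]\ge c_b\,\|u'\|_{L^2}^2$. The one–dimensional GNS inequality gives $\|u\|_{L^{p+1}}^{p+1}\le C_p\|u'\|_{L^2}^{\f{p-1}{2}}\|u\|_{L^2}^{\f{p+3}{2}}$; because $p\ge 5$ we write $\|u'\|_{L^2}^{\f{p-1}{2}}=\|u'\|_{L^2}^2\cdot\|u'\|_{L^2}^{\f{p-5}{2}}$ and absorb the last factor using Bernstein's inequality $\|u'\|_{L^2}\le 2\pi\rho\,\|u\|_{L^2}$ (valid since $u$ is band-limited), reaching $\|u\|_{L^{p+1}}^{p+1}\le C_{b,p}\|u'\|_{L^2}^2\|u\|_{L^2}^{p-1}\le C_{b,p}\|\phi\|_{L^2}^{p-1}Q_b[\phi]$; since $\|\phi_+\|_{L^{p+1}}=\|u\|_{L^{p+1}}$ this is the desired bound, and $\phi_-$ is handled identically with the conjugate modulation $e^{2\pi i\xi_0 x}$. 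Summing the three contributions proves \eqref{311}. The main obstacle is precisely this last step: one must recognize that near $\pm\xi_0$ the form $Q_b$ is, after modulation, a pure first–derivative $L^2$ norm of a band-limited function, and that band-limitedness is exactly what lets the two derivatives ``missing'' relative to the $\dot H^2$-type GNS be traded for powers of $\|\phi\|_{L^2}$ — this is what forces $p\ge 5$; the complementary restriction $p\le 9$ is the much easier requirement that the non-resonant part survive, just as in Lemmas \ref{ponefive} and \ref{le:2}.
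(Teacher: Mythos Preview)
Your proof is correct and takes a genuinely different route from the paper's.

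For the failure of \eqref{310} when $1<p<5$, the paper tests with the pure dilation $\phi(x)=\chi(\eps x)$ and reads off the powers of $\eps$; you instead use the \emph{modulated} family $\phi_n(x)=\cos(2\pi\xi_0 x)\,\psi(x/n)$, which concentrates exactly at the resonant frequency where $Q_b$ degenerates. Your choice is the more natural one: it makes transparent that the obstruction is the zero of the symbol $((2\pi\xi)^2-\f b2)^2$ at $\pm\xi_0$, and the arithmetic $n^{(5-p)/2}\to\infty$ is clean. The paper's dilation, by contrast, concentrates at frequency~$0$, where the symbol does \emph{not} vanish, and as written its scaling computation appears to drop the $\f{b^2}{4}\|\phi\|_{L^2}^2$ contribution to the bracket (which is of order $\eps^{-1}$ and would otherwise dominate).

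For $p\in[5,9]$, the paper performs a single global frequency shift, reducing \eqref{311} to the weighted estimate $\|g\|_{L^{p+1}}^{p+1}\lesssim\|g\|_{L^2}^{p-1}\int|\hat g(\xi)|^2|\xi|^2|\xi-C_b|^2\,d\xi$, and then invokes the homogeneous Sobolev embedding together with H\"older's on the Fourier side to reach $\int|\hat g|^2|\xi|^{(p-1)/2}\,d\xi$, which is to be dominated by the weight $|\xi|^2|\xi-C_b|^2$; the range $2\le\f{p-1}{2}\le4$ is read off from the behavior near $0$ and at infinity. Your argument instead \emph{localizes}: the far-from-resonance piece is handled exactly as in Lemma~\ref{ponefive} because there $Q_b$ controls the full $H^2$ norm, while near each resonance you modulate to the origin, observe that $Q_b[\phi_\pm]$ becomes (up to constants) a pure $\dot H^1$ norm of a band-limited function, and then use GNS plus Bernstein to trade the extra $\|u'\|_{L^2}^{(p-5)/2}$ for powers of $\|u\|_{L^2}$. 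This decomposition buys robustness: it treats the two zeros of the symbol symmetrically and makes explicit why the threshold is $p=5$ (Bernstein is needed precisely when $\f{p-1}{2}\ge2$) and why $p\le9$ reappears in the non-resonant estimate. The paper's argument is shorter but, as written, the pointwise symbol comparison $|\xi|^{(p-1)/2}\lesssim|\xi|^2|\xi-C_b|^2$ breaks down near the \emph{second} zero $\xi=C_b$; your localization sidesteps this entirely.
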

\begin{proof}
Let $p\in [5,9]$. Write 
$$
\int_{\rone}|\phi''(x)|^2-b |\phi'(x)|^2+\frac{b^2}{4}|\phi(x)|^2dx=\int_{\rone} |\hat{\phi}(\xi)|^2\left((2\pi \xi)^2 - \f{b}{2}\right)^2 d\xi. 
$$
Introducing $g$, so that $\hat{\phi}(\xi):=\hat{g}(2\pi \xi - \sqrt{\f{b}{2}}).$ Clearly, \eqref{311} is equivalent to the estimate 
\begin{equation}
\label{gl}
\norm{g}_{L^{p+1}}^{p+1}\leq c \norm{g}_{L^2}^{p-1} \int_{\rone} \hat{g}(\xi)|^2 |\xi|^2 |\xi-C_b|^2  d\xi 
\end{equation}
for some $C_b\neq 0$. We show \eqref{gl} as follows:  we decompose  the function in three regions - near  the two singularities $\xi=0$, $\xi=C_b$ and away from them. That is, for values of $|\xi|<<1$, we estimate  by Sobolev embedding and H\"older's inequality 
\begin{eqnarray*}
\norm{g_{<<1}}_{L^{p+1}} &\lesssim &  \|g_{<<1}\|_{\dot{H}^{\f{1}{2}-\f{1}{p+1}}} =  c \left(\int_{|\xi|<<1} |\hat{g}(\xi)|^2 |\xi|^{1-\f{2}{p+1}} d\xi\right)^{1/2} 
\lesssim \\
 &\lesssim & \|g\|_{L^2}^{\f{p-1}{p+1}} \left(\int_{|\xi|<<1} |\hat{g}(\xi)|^2 |\xi|^{\f{p-1}{2}}  d\xi \right)^{\f{1}{p+1}}   \lesssim  \norm{g}_{L^2}^{p-1} \int_{\rone} \hat{g}(\xi)|^2 |\xi|^2 |\xi-C_b|^2  d\xi. 
\end{eqnarray*}
Clearly, this last estimate holds  as long as $2\leq \f{p-1}{2}$ (since then $|\xi-C_b|\sim 1$, when $|\xi|<<1$), which is the same as $p\geq 5$. 
The estimate is similar, with the same constraint $p\geq 5$, at the singularity $\xi=C_b$. 

Finally, away from the two singularities, we have $|\xi|^2 |\xi-C_b|^2\sim |\xi|^4$, which means that following the estimates above, we need $\f{p-1}{2}\leq 4$,  which gives the other restriction $p\leq 9$.

Let now $p\in (1,5)$. Take a Schwartz function $\chi$ and then $\phi(x)=\chi(\eps x)$. Testing \eqref{310} for this choice of $\phi$ leads us to 
$\eps^{-1}\leq C \eps^{-\f{p-1}{2}}(\eps^3+\eps)$. This is a contradiction as $\eps\to 0+$, so \eqref{310} cannot hold. 
\end{proof}

\begin{lemma}\label{subslemma2}
Suppose $ b>0, \la>0 $ and  $ 1<p<9$.  Let $ \phi_k $ be a minimizing sequence for $\inf_{\|\phi\|_{L^2}^2=\la} I[\phi]$.  Then, assuming that 
\begin{itemize}
\item $p\in (1,5)$, $\la>0$, 
\item $p\in [5,9)$ and for some sufficiently large $\la_{b,p}$, $\la>\la_{b,p}$.  
\end{itemize} 
 Then, there exists a subsequence  $ \phi_{n_k} $,  such that:
$$
 \frac{1}{2}\int_{\rone}|\phi_{n_k}''(x)|^2 \to L_1,  \int_{\rone}|\phi_{n_k}'(x)|^2 \to L_2  \textup{ and } \int_{\rone}|\phi_{n_k}|^{p+1}dx\to L_3,  
$$
where $ L_1>0 $, $ L_2 > 0 $ and $ L_3 > 0 $.
\end{lemma}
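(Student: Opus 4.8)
The plan is to mirror the proof of Lemma~\ref{subslemma1}: first bound the minimizing sequence, then extract a subsequence along which $\tfrac12\|\phi_{n_k}''\|_{L^2}^2$, $\|\phi_{n_k}'\|_{L^2}^2$ and $\|\phi_{n_k}\|_{L^{p+1}}^{p+1}$ converge to $L_1,L_2,L_3\ge 0$, and finally show that none of these limits can vanish. By Lemma~\ref{le:k0} we have $-\infty<m_b(\la)<0$, so such a minimizing sequence exists and $I[\phi_k]\to m_b(\la)$. The lower bound~\eqref{300} remains available in the case $b>0$: one absorbs $-\tfrac b2\|\phi'\|_{L^2}^2$ into $\tfrac14\|\phi''\|_{L^2}^2$ using $\|\phi'\|_{L^2}^2\le\|\phi\|_{L^2}\|\phi''\|_{L^2}$ together with the GNS bound~\eqref{f:91}, adjusting constants. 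Since $g(x)=\tfrac14 x^2-c_{p,\la,b}(x^{(p-1)/4}+1)$ is coercive on $[0,\infty)$ for $p<9$, the sequence $\{\|\phi_k''\|_{L^2}\}$ is bounded; hence so are $\{\|\phi_k'\|_{L^2}\}$ and $\{\|\phi_k\|_{L^{p+1}}\}$. After passing to a subsequence finitely many times one obtains $L_1,L_2,L_3\ge 0$ with $m_b(\la)=L_1-\tfrac b2 L_2-\tfrac1{p+1}L_3$.

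The crux is the step $L_3>0$, and here the case $b>0$ genuinely differs from $b<0$, since the form $\int_\rone(|\phi''|^2-b|\phi'|^2)\,dx$ is no longer non-negative. Writing $\|\phi\|_X^2:=\int_\rone\big(|\phi''|^2-b|\phi'|^2+\tfrac{b^2}4|\phi|^2\big)\,dx\ge 0$, if $L_3=0$ then $\|\phi_{n_k}\|_X^2\ge 0$ and $\|\phi_{n_k}\|_{L^2}^2=\la$ give $2L_1-bL_2\ge-\tfrac{b^2\la}4$, so $m_b(\la)=L_1-\tfrac b2 L_2\ge-\tfrac{b^2\la}8$. Thus it suffices to establish the strict inequality $m_b(\la)<-\tfrac{b^2\la}8$, which by~\eqref{73} and completing the square is equivalent to
\begin{equation*}
\inf_{\|\phi\|_{L^2}^2=1}\Big\{\tfrac12\|\phi\|_X^2-\tfrac{\la^{(p-1)/2}}{p+1}\|\phi\|_{L^{p+1}}^{p+1}\Big\}<0 .
\end{equation*}
For $p\in(1,5)$ this holds for every $\la>0$: Lemma~\ref{le:GNS} asserts precisely that~\eqref{310} fails for every constant, and since that inequality is scale invariant one obtains a normalized $\phi$ making the bracket negative. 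For $p\in[5,9)$ the estimate~\eqref{311} shows the bracket is $\ge\big(\tfrac12-\tfrac{c_{b,p}\la^{(p-1)/2}}{p+1}\big)\|\phi\|_X^2$, which is exactly why small $\la$ fail; however, fixing a single Schwartz $\phi$ with $\|\phi\|_{L^2}^2=1$ and $\|\phi\|_{L^{p+1}}>0$ makes the bracket negative once $\la>\la_{b,p}$ for a suitable finite $\la_{b,p}$. Either way $m_b(\la)<-\tfrac{b^2\la}8$, contradicting $L_3=0$, so $L_3>0$.

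It remains to rule out $L_1=0$ and $L_2=0$. If $L_1=0$ then $\|\phi_{n_k}''\|_{L^2}\to 0$, and since the exponent of $\|\phi''\|_{L^2}$ in~\eqref{f:91} equals $\tfrac{p-1}4>0$, this forces $\|\phi_{n_k}\|_{L^{p+1}}\to 0$, i.e.\ $L_3=0$, a contradiction. If $L_2=0$ then $\|\phi_{n_k}'\|_{L^2}\to 0$, and the one-dimensional Gagliardo--Nirenberg bound $\|\phi\|_{L^{p+1}}^{p+1}\le C\|\phi'\|_{L^2}^{(p-1)/2}\|\phi\|_{L^2}^{(p+3)/2}$, combined with $\|\phi_{n_k}\|_{L^2}^2=\la$, again gives $L_3=0$, a contradiction. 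The main obstacle is precisely the step $L_3>0$: unlike in the case $b<0$, one cannot conclude it from $m_b(\la)<0$ alone, and securing the strengthened bound $m_b(\la)<-\tfrac{b^2\la}8$ is what makes Lemma~\ref{le:GNS} — and the restriction to large $\la$ when $p\ge 5$ — necessary.
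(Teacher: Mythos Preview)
Your proof is correct and follows essentially the same route as the paper's: both hinge on the completed-square form $\|\phi\|_X^2=\int(|\phi''|^2-b|\phi'|^2+\tfrac{b^2}{4}|\phi|^2)\,dx\ge 0$, the critical threshold $-\tfrac{b^2\la}{8}$, and Lemma~\ref{le:GNS} to force the contradiction with $L_3=0$. The only organizational difference is that the paper introduces the auxiliary functional $\tilde I[\phi]=\tfrac12\int(|\phi''|^2-b|\phi'|^2)\,dx$ and argues that $L_3=0$ makes $\phi_k$ minimizing for $\tilde I$ as well (yielding $m_b(\la)=-\tfrac{b^2\la}{8}$, hence an inequality of type~\eqref{310} for all $f$), whereas you bypass $\tilde I$ by deriving $m_b(\la)\ge -\tfrac{b^2\la}{8}$ directly from $L_3=0$ and separately establishing the strict bound $m_b(\la)<-\tfrac{b^2\la}{8}$; your packaging is slightly more direct but the substance is identical.
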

\begin{proof}
First, by \eqref{300},  the quantity $\int_{\rone}|\phi_{k}''(x)|^2 dx$ is bounded. By Sobolev embedding so are the other two. 
 By passing to a subsequence (denoted again $\phi_k$), we can assume that they converge to three non-negative reals, $L_1, L_2, L_3$. 
 
 Suppose first that $L_3=0$. Then,   consider  the following minimization problem 
$$
\inf_{\norm {\phi}_2^2=\la}\frac{1}{2}\int_{\rone }|\phi''(x)|^2-b|\phi'(x)|^2dx := \inf_{\norm {\phi}_2^2=\la} \tilde{I}[\phi]. 
$$
Observe that since $
\tilde{I}[\phi]\geq I[\phi], 
$
we have
$$
\lim_k \tilde{I}[\phi_k]=\lim_k I[\phi_k]=\inf_{\norm {\phi}_2^2=\la} I[\phi]\leq \inf_{\norm {\phi}_2^2=\la} \tilde{I}[\phi]. 
$$
Thus,  $\phi_k$ is minimizing for $\tilde{I}$ as well and 
$$
\inf_{\norm {\phi}_2^2=\la} I[\phi]=\inf_{\norm {\phi}_2^2=\la} \tilde{I}[\phi]. 
$$
On the other hand, 
$
\inf_{\norm {\phi}_2^2=\la} \tilde{I}[\phi] 
$
  is easily seen to be $ -\frac{\la  b^2}{8} $. Indeed, for function $\phi: \|\phi\|_{L^2}^2=\la$, we have by Plancherel's 
  \begin{equation}
  \label{320}
 2 \tilde{I}[\phi] +\f{b^2}{4}\la= \int_{\rone }|\phi''(x)|^2-b|\phi'(x)|^2+\f{b^2}{4} \phi^2(x)dx=\int_{\rone} |\hat{\phi}(\xi)|^2\left|(2\pi \xi)^2  - \f{b}{2}\right|^2 d\xi\geq 0.
  \end{equation}
 whence $\inf_{\norm {\phi}_2^2=\la} \tilde{I}[\phi] \geq -\frac{\la  b^2}{8}.$ On the other hand, for any Schwartz function $\chi$, consider 
 $$
 \hat{\phi}_\epsilon(\xi):=\f{\sqrt{\la}}{\sqrt{\epsilon}\|\chi\|_{L^2}} \chi\left(\f{ \xi-\f{1}{2\pi}\sqrt{\f{b}{2}}}{\epsilon}\right)
 $$
  which has $\|\phi\|_{L^2}^2=\la$ and saturates the inequality \eqref{320} in the sense that 
  $$
\lim_{\epsilon\to 0+}  \int_{\rone} |\hat{\phi}_\epsilon(\xi)|^2\left|(2\pi \xi)^2  - \f{b}{2}\right|^2 d\xi \to 0. 
  $$
   Thus, $\inf_{\norm {\phi}_2^2=\la} I[\phi]=-\frac{\la  b^2}{8}$. 
  So , we have 
$$
-\frac{\la b^2}{8}=m_{b}(\la)\leq \frac{1}{2}\int_{\rone}|\phi''(x)|^2-b |\phi'|^2dx -\frac{1}{p+1}\int_{\rone}|\phi(x)|^{p+1}dx. 
$$
holds for all  $ \phi $ with $ \norm{\phi}_2^2=\la $. Applying this to an arbitrary $f$ and $\phi:= \sqrt{\la} \f{f}{\|f\|_{L^2}}$,  so that $\|\phi\|_{L^2}^2=\la$   the following inequality holds 
$$
 \frac{\la^{\frac{p-1}{2}}b^\frac{p-9}{4}}{p+1}\int_{\rone}|f(x)|^{p+1}dx\leq \frac{1}{2}\norm{f}_2^{p-1}\left(\int_{\rone}|f''(x)|^2- b |f'(x)|^2+\frac{b^2}{4}
 |f(x)|^2dx\right)
 $$
 for all $f\neq 0$. This last inequality however contradicts Lemma \ref{le:GNS} - for every $\la>0$, if $p\in (1,5)$ and for all large enough 
 $\la$, if $p\in [5,9)$.  Thus $L_3\neq 0$. Clearly, by Sobolev embedding $L_1> 0$, $L_2>0$, otherwise $L_3$ must be zero, which previously lead to a contradiction.

\end{proof}

 \subsubsection{ Strict sub-additivity}

\begin{lemma}
\label{le:7}
Let $ 1<p<9 $ and $ \la > 0 $ Then  for all $ \alpha\in (0,\la) $  we have

 \begin{equation} \label{StricSubadditivity}
 m_b(\la)<m_b(\alpha)+m_b(\lambda-\alpha).
 \end{equation} 
\end{lemma}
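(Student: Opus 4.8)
The plan is to deduce \eqref{StricSubadditivity} from a \emph{strict scaling inequality} for $m_b$, in the standard Lions style. Note first that the statement is only meaningful — and I would only prove it — in the regime $-\infty<m_b(\la)<0$, which under the running hypotheses on $p$ and $\la$ is guaranteed by Lemmas \ref{ponefive}, \ref{le:2}, \ref{le:k0}; recall also from \eqref{73} and the lines following it that $t\mapsto m_b(t)/t$ is non-increasing and $m_b\le 0$ on $(0,\infty)$. The key intermediate claim I would establish is: \emph{if $\sigma>0$ satisfies $-\infty<m_b(\sigma)<0$, then $m_b(\theta\sigma)<\theta\,m_b(\sigma)$ for every $\theta>1$.}

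To prove this scaling claim, I would pick a minimizing sequence $\phi_k$ for $m_b(\sigma)$ and test the definition of $m_b(\theta\sigma)$ against $\psi_k:=\sqrt{\theta}\,\phi_k$, which has $\|\psi_k\|_{L^2}^2=\theta\sigma$. Since $\phi\mapsto c\phi$ multiplies the quadratic part of $I$ by $c^2$ and the $L^{p+1}$ term by $c^{p+1}$, the choice $c=\sqrt{\theta}$ gives the identity
\begin{equation}
\label{plan:scaled}
I[\psi_k]=\theta\, I[\phi_k]-\theta\,\f{\theta^{\f{p-1}{2}}-1}{p+1}\int_\rone|\phi_k|^{p+1}\,dx .
\end{equation}
The crucial point is that along the minimizing sequence $\int_\rone|\phi_k|^{p+1}\,dx\to L_3>0$, which is precisely the content of Lemma \ref{subslemma1} (when $b<0$) and Lemma \ref{subslemma2} (when $b>0$). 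Passing to the limit in \eqref{plan:scaled} and using $\theta^{(p-1)/2}>1$ (valid since $p>1$, $\theta>1$) then yields $m_b(\theta\sigma)\le\theta m_b(\sigma)-\theta\,\f{\theta^{(p-1)/2}-1}{p+1}L_3<\theta m_b(\sigma)$; finiteness of $m_b(\theta\sigma)$ is clear from the lower bound \eqref{300}.

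With the scaling claim available, I would conclude as follows. Given $\alpha\in(0,\la)$, put $\mu_1:=\min(\alpha,\la-\alpha)\le\mu_2:=\max(\alpha,\la-\alpha)$, so $\mu_1+\mu_2=\la$. If $m_b(\mu_2)=0$, then $m_b(\mu_1)/\mu_1\ge m_b(\mu_2)/\mu_2=0$ (monotonicity of $m_b(\cdot)/\cdot$) together with $m_b\le0$ forces $m_b(\mu_1)=0$, hence $m_b(\alpha)+m_b(\la-\alpha)=0>m_b(\la)$. If instead $m_b(\mu_2)<0$, I would apply the scaling claim with $\sigma=\mu_2$ and $\theta=\la/\mu_2=1+\mu_1/\mu_2>1$ to get $m_b(\la)<\f{\la}{\mu_2}m_b(\mu_2)=m_b(\mu_2)+\f{\mu_1}{\mu_2}m_b(\mu_2)$, and then note $\f{\mu_1}{\mu_2}m_b(\mu_2)=\mu_1\f{m_b(\mu_2)}{\mu_2}\le\mu_1\f{m_b(\mu_1)}{\mu_1}=m_b(\mu_1)$ (again by monotonicity of $m_b(\cdot)/\cdot$ and $\mu_1\le\mu_2$); combining gives $m_b(\la)<m_b(\mu_1)+m_b(\mu_2)=m_b(\alpha)+m_b(\la-\alpha)$, i.e. \eqref{StricSubadditivity}.

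The only genuinely non-elementary ingredient is the strict positivity $L_3>0$ of the $L^{p+1}$-mass along minimizing sequences: without it, \eqref{plan:scaled} delivers only the non-strict bound $m_b(\theta\sigma)\le\theta m_b(\sigma)$, which is not enough. This is exactly where Lemmas \ref{subslemma1} and \ref{subslemma2} do the real work (they in turn lean on the GNS inequalities and, in the $b>0$ case, on Lemma \ref{le:GNS}), so with those in hand the proof above is just bookkeeping plus the already-established monotonicity of $m_b(\cdot)/\cdot$.
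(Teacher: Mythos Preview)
Your proof is correct and follows essentially the same approach as the paper: both establish the strict scaling inequality $m_b(\theta\sigma)<\theta\,m_b(\sigma)$ for $\theta>1$ by testing with $\sqrt{\theta}\,\phi_k$ and invoking $L_3>0$ from Lemmas \ref{subslemma1}/\ref{subslemma2}, then combine this with the monotonicity of $m_b(\cdot)/\cdot$ to conclude. Your case split (according to whether $m_b(\mu_2)=0$ or $m_b(\mu_2)<0$) is a slightly cleaner reorganization of the paper's case split by sign of $b$ and range of $p$, but the substance is identical.
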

 \begin{proof}
First, suppose that $ 1<p<5 $ and $ b<0 $. Then 
$$
m_b(\la)=\frac{\la}{\al}\inf_{\norm{\phi}_2^2=\al}\{\frac{1}{2}\int_{\rone}|\phi''(x)|^2-b|\phi'(x)|^2dx -
\frac{(\la/\al)^{\frac{p-1}{2}}}{p+1}\int_{\rone}|\phi(x)|^{p+1}dx\}<\frac{\la}{\al}m_b{(\al)}, 
$$
where the last strict inequality holds because there exists a  minimizing sequence for 
$m_b(\al)$, which  has the property $ \lim_k \norm{\phi_k}_{p+1} >0$. This means that the function 
$\la\to \f{m_b(\la)}{\la}$ is strictly decreasing.  
Assuming that $ \al \in [\f{\la}{2}, \la) $ (and otherwise we work with $ \la-\al $) we get 
$$
 m_b(\la)<\frac{\la}{\al}m_{b}(\al)=m_b(\al)+\frac{\la-\al}{\al}m_{b}(\al)\leq m_{b}(\al)+m_{b}(\la-\al),
 $$ 
 where we have used $\f{m_b(\al)}{\al}\leq \f{m_b(\la-\al)}{\la-\al}$, since $\al\geq \la-\al$. This completes the case $p\in (1,5), b<0$. 
 
Let $ 5\leq p < 9 $ and $ b<0 $. Note that in this case, $m_b(x)$ is zero for small $x$, by Lemma \ref{le:2}. 
 So, there are three  possibilities:

\begin{enumerate}
 \item $ m_b(\al) =m_b(\la-\al)=0$. In this case \eqref{StricSubadditivity} trivially holds, since by assumption $m_b(\la)<0$. 
\item $ m_b(\la)<0 $, but $ m_b(\la-\al)=0 $. In this case we have
$$
m_b(\la)<\frac{\la}{\al}m_b(\al)=m_b(\al)+(\frac{\la}{\al}-1)m_b(\al)< m_b(\al)+m_b(\la-\al). 
$$
\item When both $m_b(\al), m_b(\la-\al)$ are negative, the proof is the same as in the case $ 1<p<5 $ for $ b<0$.  
\end{enumerate} 

 Next, we consider  the cases when $ b>0 $. In this case for all $ 1<p<5 $ and all $ \la>0 $ we have that $ -\infty<m_b(\la)<0 $. The proof is the same as in the case $b<0, p\in (1,5)$, since we never develop the complication that $m_b(\la)=0$ for any $\la>0$.  The case $p\in [5,9)$ and $\la>\la_{b,p}$ is similar as well. 
 \end{proof}

 \subsection{Existence of the minimizer}
 \label{minimizer}
Now, suppose 
$$
\left\{
\begin{array}{cc}
1<p<5 & \la> 0 \\
5\leq p<9 & \la>\la_{b,p} 
\end{array}
\right.
$$
  so that Lemma \ref{subslemma1} and Lemma \ref{subslemma2} hold. Let $ \{\phi_k\}_{k=1}^{\infty}\subset H^2  $ be a minimizing sequence, i.e.  
$$
  \int_{\rone}|\phi_k|^2 dx=\lambda, \qquad I[\phi_k]\to m_b(\la). 
  $$
Therefore, by passing to a further subsequence, by Lemma   \ref{subslemma1} and Lemma \ref{subslemma2}, we have 
$$
 \left\lVert \phi_k'' \right\rVert_2^2\to L_1>0,\quad  \left\lVert \phi_k' \right\rVert_2^2\to L_2>0,\quad \left\lVert \phi_k \right\rVert^{p+1}_{L^{p+1}}\to L_3>0.   
 $$  
Let $ \rho_k=|\phi_k|^2 $, so $\int \rho_k(x) dx=\la$. 
By the concentration compactness lemma of P.L.Lions (see Lemma 1.1, \cite{lions}), there is a subsequence (denoted again by $\rho_k$), so that  at least one of the following is satisfied: 
\begin{enumerate}
\item \textit{Tightness.} There exists $ y_k \in \rone $ such that for any $ \ve >0 $ there exists $ R(\ve) $ such that for all $ k $ 
$$
\int_{B(y_k,R(\ve))}\rho_k dx\geq \int_{\rone}\rho_k-\ve. 
$$
\item \textit{Vanishing.} For every $ R>0 $
$$
\lim\limits_{k\to \infty } \sup_{y\in \rone}\int_{B(y,R)}\rho_kdx=0. 
$$
\item \textit{Dichotomy.} There exists $ \alpha \in (0,\lambda) $, such that for any $ \ve>0 $ there exist $ R, R_k\to \infty,y_k $ and $ k_0 $ such that 
\begin{equation}
\label{350}
\left| \int_{B(y_k,R)}\rho_k dx-\alpha \right| < \ve, \ \left| \int_{R<|x-y_k|<R_k}\rho_k dx\right| < \ve, \ \ \left| \int_{R_k<|x-y_k|}\rho_k dx  -(\la-\al)\right| < \ve. 
\end{equation}
\end{enumerate}
 We proceed to rule out the dichotomy and vanishing  alternatives, which will leave us with tightness. 
\subsubsection{Dichotomy is not an option} 
 Assuming dichotomy, we have by \eqref{350} and $\int \rho_k(x) dx=\la$ that $ \left| \int_{R_k<|x-y_k|}\rho_k dx-(\la-\al) \right| < 2 \ve$. 
 Let $ \psi_1,\psi_2\in C^{\infty }(\rone) $, satisfying $ 0\leq\psi_1,\psi_2\leq 1 $  and 

$$
\psi_1(x) =\begin{cases}
1,&|x|\leq 1, \\
0,&|x|\geq2,   \\
\end{cases}
,\quad
\psi_2(x) =\begin{cases}
1,&|x|\geq 1, \\
0,&|x|\leq1/2,   \\
\end{cases}. 
$$ 
 Define $ \phi_{k,1} $ and $ \phi_{k,2} $ as follows:
 $$
  \phi_{k,1}(x)=\phi_k(x)\psi_1\left(\frac{x-y_k}{R_k/5}\right),\quad  \phi_{k,2}(x)=\phi_k(x)\psi_2\left(\frac{x-y_k}{R_k}\right).
  $$
  Clearly, for $ k $ large enough we have 
$$
\left| \int_{\rone}  \phi_{k,1}^2(x) dx-\alpha \right| < 2\ve \mbox{ and } \left| \int_{\rone} \phi^2_{k,2}(x)  dx  -(\lambda-\alpha) \right| < 2\ve. 
$$
  In fact, by taking a sequence $\ve_k\to 0$, we can find subsequence of $\phi_{k,1}, \phi_{k,2}$ (denoted again the same) and sequences  
  $ \{y_k\} _{k=1}^{\infty}\subset \rone$, $ \{R_k\}_{k=1}^{\infty} $ with $ R_k\ra \infty $ as $ k\ra \infty $, such that
\begin{equation}
\label{CCompactenessSeq}
\lim_{k\rightarrow\infty}\int_{\rone}|\phi_{k,1}|^{2} dx=\alpha,\quad  \lim_{k\rightarrow\infty}\int_{\rone}\left| \phi_{k,2} \right|^2 dx=\lambda-\alpha \mbox{ and } \int_{R_k/5<|x-y_k|<R_k}|\phi_k|^2 dx<\frac{1}{k}. 
\end{equation}
Consider $I[\phi_k]-I[\phi_{k,1}]-I[\phi_{k,2}]$. Using  \eqref{CCompactenessSeq} we get
\begin{align*}
&I[\phi_k]-I[\phi_{k,1}]-I[\phi_{k,2}]
=\frac{1}{2}\int_{\rone}|\phi_k''|^2-b|\phi_k'|^2dx-\frac{1}{p+1}\int_{\rone}|\phi_k|^{p+1}\\
&-\frac{1}{2}\int_{\rone}\left |\left (\phi_k\psi_1\left (\frac{x-y_k}{R_k/5}\right )\right )''\right |^2- 
b\left |\left(\phi_k\psi_1\left(\frac{x-y_k}{R_k/5}\right )\right)'\right|^2dx
+\frac{1}{p+1}\int_{\rone}\left |\left(\phi_k\psi_1\left(\frac{x-y_k}{R_k/5}\right)\right) \right|^{p+1}\\
&-\frac{1}{2}\int_{\rone}\left |\left (\phi_k\psi_2\left(\frac{x-y_k}{R_k}\right )\right)''\right |^2-
b\left |\left (\phi_k\psi_2\left(\frac{x-y_k}{R_k}\right)\right)'\right|^2dx
+\frac{1}{p+1}\int_{\rone}\left|\left(\phi_k\psi_2\left(\frac{x-y_k}{R_k}\right)\right) \right|^{p+1}\\
&=\frac{1}{2}\int_{\rone}\left(1-\psi_1^2\left(\frac{x-y_k}{R_k/5}\right)-\psi_2^2\left (\frac{x-y_k}{R_k}\right)\right) \left[|\phi_k''(x)|^2 - \f{b}{2}|\phi_k'(x)|^2\right] dx +\\
&+ \f{1}{p+1} \int_{\rone} |\phi_k(x)|^{p+1}\left(\psi_1^{p+1}\left(\frac{x-y_k}{R_k/5}\right)+\psi_2^{p+1}\left (\frac{x-y_k}{R_k}\right)-1\right)dx  +  E_k. 
\end{align*}
The error term $E_k$, contains only terms having at least one derivative on the cutoff functions, therefore generating $R_k^{-1}$. At the same time, there is at most one derivative falling on the $\phi_k$. So, we can estimate these terms away as follows 
$$
|E_k| \leq \f{C}{R_k} \int_{R_k/5<|x|<2 R_k} (|\phi_k(x)|^2+ |\phi'_k(x)|^2) dx\leq \f{C}{R_k} \|\phi_k\|_{L^2}(\|\phi_k\|_{L^2}+\|\phi_k''\|_{L^2}).
$$
 Since $ \sup_k \|\phi_k\|_{L^2}, \sup_k \|\phi_k''\|_{L^2}<\infty$, we conclude that $\lim_k E_k=0$.  For the next term, we have the positivity relation  
 $ \int_{\rone}\left(1-\psi_1^2\left(\frac{x-y_k}{R_k/5}\right)-\psi_2^2\left (\frac{x-y_k}{R_k}\right)\right) |\phi_k''(x)|^2  dx >0$.  
 Integration by parts yields 
 \begin{eqnarray*}
& &   \int_{\rone}\left(1-\psi_1^2\left(\frac{x-y_k}{R_k/5}\right)-\psi_2^2\left (\frac{x-y_k}{R_k}\right)\right) |\phi_k'(x)|^2  dx = \\
&=& -\int_{\rone} \phi_k(x) \f{d}{dx}[ \left(1-\psi_1^2\left(\frac{x-y_k}{R_k/5}\right)-\psi_2^2\left (\frac{x-y_k}{R_k}\right)\right)    \phi_k'(x)] dx 
\end{eqnarray*}
Thus, by H\"older's inequality 
 \begin{eqnarray*}
& &   |\int_{\rone}\left(1-\psi_1^2\left(\frac{x-y_k}{R_k/5}\right)-\psi_2^2\left (\frac{x-y_k}{R_k}\right)\right) |\phi_k'(x)|^2  dx| \leq  \\
&\leq & C \|\phi_k''\|_{L^2}  \|\phi_k\|_{L^2(R_k/5<|\cdot|<R_k)}+\f{C}{R_k} \|\phi_k'\|_{L^2} \|\phi_k\|_{L^2}.
\end{eqnarray*}
Note that since $R_k\to \infty$ and on the other hand $\|\phi_k\|_{H^2}$ is uniformly bounded in $k$, this term goes to zero, by the last estimate in \eqref{CCompactenessSeq}. 
  Finally, 
   \begin{eqnarray*}
& &    |\int_{\rone} |\phi_k(x)|^{p+1}\left(\psi_1^{p+1}\left(\frac{x-y_k}{R_k/5}\right)+\psi_2^{p+1}\left (\frac{x-y_k}{R_k}\right)-1\right)dx|
\leq \int_{ R_k/5<|x-y_k|<R_k} |\phi_k(x)|^{p+1} dx. 
 \end{eqnarray*}
Since by GNS 
$$
 \int_{ R_k/5<|x-y_k|<R_k} |\phi_k(x)|^{p+1} dx \leq C \|\phi_k''\|_{L^2}^{\f{p-1}{4}} \|\phi_k\|_{L^2(R_k/5<|\cdot|<R_k)}^{\f{3p+5}{4}}, 
$$
 and $\|\phi_k''\|_{L^2}$ is uniformly bounded in $k$,    we conclude that this term also goes to zero as $k\to \infty$. 

It follows that 
\begin{equation}
\label{400}
\liminf_{k\to \infty} \left[I[\phi_k]-I[\phi_{k,1}]-I[\phi_{k,2}]\right]\geq 0. 
\end{equation}
 Now, let $ \{a_k\}_{k=1}^{\infty} $ and $ \{b_k\}_{k=1}^{\infty} $ be sequences  such that 
$
\left\lVert a_k \phi_{k,1} \right\rVert_2^2=\alpha,\quad \left\lVert b_k \phi_{k,2} \right\rVert^2_2=\lambda-\alpha.   
$
Note that $ a_k,b_k\ra 1 $.  Using \eqref{400}, there is $\be_k: \lim_k \be_k=0$,  so that 
\begin{align*}
I[\phi_k]
&\geq  I[\phi_{k,1}]+I[\phi_{k,2}]+\beta_k\\
&\geq I[a_k \phi_{k,1}]+I[b_k \phi_{k,2}]+\beta_k-C(|1-a_k|+|1-b_k|)\\
&\geq m_b(\al)+m_b(\la-\al) +\beta_k-C(|1-a_k|+|1-b_k|). 
\end{align*}
where we have used that $\sup_k \|\phi_k\|_{H^2}<\infty$, the estimate $| I(\phi)-I(a \phi)|\leq C(\|\phi\|_{H^2}) |1-a|$ (which is a direct consequence of the definition of the functional $I[\cdot]$) and the definition of $m_b(z)$. 
Taking limits in $k$, we see that 
$$
m_b(\la)= \lim_k I[\phi_k]\geq m_b(\al)+m_b(\la-\al),
 $$
  which  is a contradiction with  the sub-additiivity of $m_b(\cdot)$ established in Lemma \ref{le:7}.   So, dichotomy cannot occur.

 \subsubsection{Vanishing does not occur} 
 \label{sec:3.2.2}
 Suppose vanishing occurs and $\ve>0$.  Let $ \phi \in C^\infty$ be such that 
$$
 \eta(x)=
\begin{cases}
1,&|x|\leq 1,\\
0, &|x|\geq 2.   \\
\end{cases} 
$$
 Using GNS we have for all $ R $ and $y\in \rone  $
\begin{eqnarray*}
\|\phi_k \|_{L^{p+1}\left(B(y,R)\right)}^{p+1}  &\leq &   \int_{B(y,R)}|\phi_k|^{p+1}dx \leq \int_{\rone }\left |\phi_k\eta\left (\frac{x-y}{R}\right)\right |^{p+1}dx \\
&\leq&  \left\lVert \left (\phi_k\eta\left (\frac{x-y}{R}\right)\right )'' \right\rVert_{L^2(\rone)}^{\frac{p-1}{4}}\left\lVert \phi_k \right\rVert_{L^2(B(y,2R))}^{\frac{3p+5}{4}} \leq C_{\eta,R} \left\lVert \phi_k \right\rVert_{L^2(B(y,2R))}^{\frac{3p+5}{4}}.
\end{eqnarray*}
 We can cover $ \rone $ with balls of radius $ 2 $ such that every point is contained in at most $ 3 $ balls, let it be  $ \{B(y_j,2)\} $. Moreover, we can choose these balls so that $ \{B(y_j,1)\} $ still covers $ \rone $. Choose $ N\in \N $ so large that for all $k>N$, 
 $$
 \int_{B(y,2)}|\phi_k|^2dx<\ve,
 $$
 for all $ y\in \rone $.  
 We can estimate the $ L^{p+1}(\rone ) $ norm of $ \phi_k$ as follows
\begin{align*}
\left\lVert \phi_k \right\rVert_{L^{p+1}(\rone)}^{p+1}\leq\sum_{j=1}^{\infty}\int_{B(y_j,1)}|\phi_{k}|^{p+1}dx\leq \sum_{j=1}^{\infty}C_{\eta,R}\left\lVert \phi_k \right\rVert_{L^2(B(y_j,2))}^{2}\left\lVert \phi_k \right\rVert_{L^2(B(y_j,2))}^{\frac{3p-3}{4}}\leq 3 C_{\eta,R} \ve^{\frac{3p-3}{4}}\left\lVert \phi_k \right\rVert_{L^{2}(\rone)}^2.
\end{align*}
So,  we get that $ \left\lVert \phi_k \right\rVert_{L^{p+1}(\rone)}^{p+1}\ra 0 $ as $ k\ra \infty $ which is a contradiction. Therefore, the sequence $ \rho_k=|\phi_k|^2  $ is tight.
 \subsubsection{Existence of the minimizer} 
 \label{sec:3.2.3} 
 We have that there exists a sequence $ \{y_k\}_{k=1}^{\infty} $ such that for all $ \ve>0 $ there exists $ R(\ve) $ such that 
$$
\int_{|x|>R(\ve)}|\phi_k(y_k+x)|^2dx<\ve. 
$$
Define $ u_k(x):=\phi_k(y_k+x) $. The sequence $ \{u_k\}_{k=1}^{\infty}\subset H^2 $  is bounded, therefore there exists a weakly convergent subsequence( renamed to $ \{u_k\}_{k=1}^{\infty} $), say,  to $ u \in H^2$ . By the tightness and the compactness criterion on $ L^2(\rn) $,  the sequence $ \{u_k\}_{k=1}^{\infty} $  has a strongly convergent subsequence in $ L^2(\rone) $, say, to $ \widetilde{u} \in H^2$. Since weak convergence on $ H^2 $ implies weak convergence on $ L^2 $, we have that $ u=\widetilde{u} $ by uniqueness of weak limits. In addition, $\|u\|_{L^2}^2=\lim_k \|u_k\|_{L^2}^2=\la$, so $u$ satisfies the constraint. 

We also have that $ u_k $ converges to $ u $ in $ L^{p+1} $ norm. Indeed, using GNS inequality we get 
\begin{align*} \left\lVert u_k-u \right\rVert_{L^{p+1}(\rone)}
&\leq \left\lVert (u_k-u)'' \right\rVert_{L^{2}(\rone)}^{\frac{p-1}{4(p+1)}}\left\lVert u_k-u \right\rVert_{L^{2}(\rone)}^{1-\frac{p-1}{4(p+1)}}\\
&\leq C\left\lVert u_k-u\right\rVert_{L^{2}(\rone)}^{1-\frac{p-1}{4(p+1)}}\ra 0 \mbox{ as }k\ra \infty.
\end{align*}
Also, since 
$$
\|u_k'-u'\|_{L^2}^2\leq \|u_k''-u''\|_{L^2}\|u_k-u\|_{L^2}\leq (\|u_k''\|_{L^2}+\|u''\|_{L^2}) \|u_k-u\|_{L^2}, 
$$
we conclude that $\lim_k \|u_k'-u'\|_{L^2}=0$, and in addition  $\lim_k \int (u_k'(x))^2 dx \to \int (u'(x))^2 dx$. 

Finally,  by the lower semicontinuity of the $L^2$ norm with respect to weak convergence, we have 
$\liminf_k \int_{\rone}|u_k''|^2\geq \int_{\rone}|u''|^2$.  We conclude that 
\begin{equation*}
 \liminf_k \frac{1}{2}\int_{\rone}|u_k''|^2-b|u_k'|^2dx-\frac{1}{p+1}\int_{\rone }|u_k|^{p+1}dx\geq \frac{1}{2}\int_{\rone}|u''|^2-b|u'|^2dx-\frac{1}{p+1}\int_{\rone }|u|^{p+1}dx,  
 \end{equation*}
whence we have that $m_b(\la)\geq I[u] $, therefore $I(u)=m_b(\la)$ and $ u $ is a minimizer.
 \label{sec:2.3}

 \subsection{Euler-Lagrange equation}
 \begin{proposition}
 \label{prop:14}
 Let $p\in (1,9), \la>0$,  be so that  
 \begin{itemize}
 \item $1<p<5, \la>0$
 \item $5\leq p<9,  \la>\la_{b,p}>0$. 
 \end{itemize}
 Then, there exists a function $\om(\la)>0$, so that  
 the minimizer of the constrained minimization problem \eqref{70} $\phi=\phi_\la$ constructed  in Section \ref{sec:2.3},   satisfies the Euler-Lagrange equation
 \begin{equation}
 \label{450}
 \phi_\la''''+b\phi_{\la}''-|\phi_\la|^{p-1}\phi_\la+\om(\la)\phi_{\la}=0
 \end{equation}
where 
$$
\om(\la) = \frac{1}{\la}\int_\rone b(\phi_\la')^2+|\phi_\la|^{p+1}-(\phi_\la'')^2dx. 
$$
In addition, $n(\cl_+)=1$, that is $\cl_+$ has exactly one negative eigenvalue.  In fact $\cl_+|_{\{\phi_\la\}^\perp}\geq 0$. 
 \end{proposition}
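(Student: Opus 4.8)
The plan is to produce \eqref{450} from the Lagrange multiplier principle applied to the minimizer $\phi_\la$, to read off the formula and the sign of $\om$ from the identity obtained by pairing the equation with $\phi_\la$, and to extract the spectral data on $\cl_+$ from the second-order constrained minimality condition. First I would record that $I$ is $C^2$ on $H^2(\rone)$: the quadratic part is trivially smooth, and $\phi\mapsto\int_\rone|\phi|^{p+1}\,dx$ is $C^2$ since $t\mapsto|t|^{p+1}$ is $C^2$ on $\mathbf R$ for $p>1$, together with $H^2(\rone)\hookrightarrow L^q(\rone)$ for all $q<\infty$. The constraint functional $\phi\mapsto\norm{\phi}_{L^2}^2$ has nonvanishing derivative $2\phi_\la\ne 0$ at the minimizer, so the Lagrange multiplier theorem gives a real $\om=\om(\la)$ with $I'[\phi_\la]+\om\phi_\la=0$ in $H^{-2}(\rone)$; since $I'[\phi]=\phi''''+b\phi''-|\phi|^{p-1}\phi$, this is exactly \eqref{450}, a priori in the distributional sense. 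Elliptic regularity is then immediate in one dimension: $\phi_\la\in H^2(\rone)\hookrightarrow L^\infty(\rone)$ gives $|\phi_\la|^{p-1}\phi_\la\in L^2(\rone)$, hence $\phi_\la''''=-b\phi_\la''+|\phi_\la|^{p-1}\phi_\la-\om\phi_\la\in L^2(\rone)$, so $\phi_\la\in H^4(\rone)\hookrightarrow C^3(\rone)$ and \eqref{450} holds classically.

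Pairing \eqref{450} with $\phi_\la$ and integrating by parts yields
$$
\norm{\phi_\la''}_{L^2}^2-b\norm{\phi_\la'}_{L^2}^2-\norm{\phi_\la}_{L^{p+1}}^{p+1}+\om\la=0,
$$
which is the stated expression for $\om(\la)$. Writing $A=\norm{\phi_\la''}_{L^2}^2$, $B=\norm{\phi_\la'}_{L^2}^2$, $C=\norm{\phi_\la}_{L^{p+1}}^{p+1}$, I would combine this with the non-triviality $I[\phi_\la]=m_b(\la)<0$ established in Lemmas \ref{ponefive}, \ref{le:k0}, \ref{le:2}: the inequality $\tfrac12(A-bB)<\tfrac{C}{p+1}$ rearranges to $bB-A>-\tfrac{2C}{p+1}$, whence $\om\la=C+bB-A>C-\tfrac{2C}{p+1}=\tfrac{p-1}{p+1}C>0$, using $p>1$ and $C>0$ (the latter being $L_3>0$ from the compactness step, Lemmas \ref{subslemma1}, \ref{subslemma2}). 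Thus $\om(\la)>0$ irrespective of the sign of $b$.

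For the spectral statements, set $\mathcal G[\phi]:=I[\phi]+\tfrac{\om}{2}\norm{\phi}_{L^2}^2$, so that $\mathcal G'[\phi_\la]=0$ by \eqref{450} and the Hessian $\mathcal G''[\phi_\la]$ is the operator with quadratic form $h\mapsto\dpr{\cl_+ h}{h}$, where $\cl_+=\p_x^4+b\p_x^2+\om-p|\phi_\la|^{p-1}$. Given $h\in H^2(\rone)$ with $h\perp\phi_\la$, the curve $\ga(\eps):=\sqrt{\la}\,(\phi_\la+\eps h)\norm{\phi_\la+\eps h}_{L^2}^{-1}$ stays on the constraint sphere (its denominator equals $(\la+\eps^2\norm{h}_{L^2}^2)^{1/2}>0$), with $\ga(0)=\phi_\la$ and $\ga'(0)=h$; since $\eps\mapsto\mathcal G[\ga(\eps)]=I[\ga(\eps)]+\tfrac{\om}{2}\la$ is minimized at $\eps=0$ and $\mathcal G'[\phi_\la]=0$, the second-derivative test gives $\dpr{\cl_+ h}{h}=(\mathcal G\circ\ga)''(0)\ge 0$. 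This is the asserted $\cl_+|_{\{\phi_\la\}^\perp}\ge 0$, and it forces $n(\cl_+)\le 1$: a subspace of dimension $\ge 2$ on which $\cl_+$ were negative would meet the codimension-one space $\{\phi_\la\}^\perp$ nontrivially, a contradiction. Conversely, pairing $\cl_+\phi_\la$ with $\phi_\la$ and using the identity above, $\dpr{\cl_+\phi_\la}{\phi_\la}=(A-bB+\om\la)-pC=(1-p)C<0$, so $n(\cl_+)\ge 1$. Hence $n(\cl_+)=1$, with the lone negative eigenvalue simple and isolated, since by min--max the second point of $\sigma(\cl_+)$ is $\ge\inf_{h\perp\phi_\la}\dpr{\cl_+ h}{h}\norm{h}_{L^2}^{-2}\ge 0$.

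The only point needing genuine care is the passage from global constrained minimality to the quadratic-form inequality $\dpr{\cl_+ h}{h}\ge 0$: one must verify that $I$ is twice continuously Fréchet differentiable on $H^2(\rone)$ — this is precisely where the hypothesis $p>1$ enters, through the $C^2$-regularity of $t\mapsto|t|^{p+1}$ — so that $\eps\mapsto\mathcal G[\ga(\eps)]$ is $C^2$ and the curvature contribution $\dpr{\mathcal G'[\phi_\la]}{\ga''(0)}$ drops out thanks to $\mathcal G'[\phi_\la]=0$. The remaining steps are routine bookkeeping with the Euler--Lagrange equation \eqref{450} and elementary spectral theory.
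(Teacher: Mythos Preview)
Your argument is correct and follows essentially the same route as the paper: both derive \eqref{450} by varying along the normalized curve $\sqrt{\la}\,(\phi_\la+\de h)/\norm{\phi_\la+\de h}_{L^2}$, read off $\om(\la)$ by pairing with $\phi_\la$, and obtain $\cl_+|_{\{\phi_\la\}^\perp}\ge 0$ from the second-order minimality condition along the same curve with $h\perp\phi_\la$, concluding $n(\cl_+)=1$ via $\dpr{\cl_+\phi_\la}{\phi_\la}=-(p-1)\norm{\phi_\la}_{L^{p+1}}^{p+1}<0$. Your packaging via the unconstrained functional $\mathcal G=I+\tfrac{\om}{2}\norm{\cdot}_{L^2}^2$ is a clean way to make the curvature term $\dpr{\mathcal G'[\phi_\la]}{\ga''(0)}$ vanish, and you additionally supply the short inequality $\om\la>\tfrac{p-1}{p+1}\norm{\phi_\la}_{L^{p+1}}^{p+1}>0$ from $m_b(\la)<0$, which the paper's proof of this proposition omits (the stronger bound \eqref{342} is established separately in Lemma~\ref{le:w}).
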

 \begin{proof}
 We have shown that minimizers for the constrained minimization problem exists in the two cases described above, for both $b>0$ and $b<0$. 
 
  Consider $ u_\de = \sqrt{\la}\frac{\phi_\la +\de h}{\norm{\phi_\la +\de h}}$, where $ h $ is a test function. Note that $\|u_\de\|_{L^2}^2=\la$, 
  so it satisfies the constraint. Expanding $ I[u_\de] $ in powers of $ \de $ we obtain 

\begin{align*}
I[u_\de] &= m_b(\la) + \\
&+ \de \left[ \int_\rone \phi_\la'' h'' -bh'\phi_\la'-h|\phi_\la|^{p-1}\phi_\la dx  +\frac{1}{\la}\int_\rone b(\phi_\la')^2+|\phi_\la|^{p+1}-(\phi_\la'')^2dx\int_\rone \phi_\la h dx \right]\\
& +\frac{\de^2}{2}\left[\int_\rone (h'')^2  -b (h')^2 - p h^2 \left| \phi_\la \right|^{p-1}dx\right] \\
&+ 
 \frac{\de^2}{\la} \dpr{h}{\phi}    \int_\rone (p+1)h|\phi|^{p-1}\phi  +2b h'\phi_\la' -2h''\phi_\la''dx \\
& +\frac{\de^2 }{2 \la^2} \dpr{h}{\phi}^2  \int_\rone (p+3)\left| \phi_\la \right|^{p+1} +4b(\phi_\la')^{2} -4(\phi_\la'')^{2}dx +\\
& +   \frac{\de^2}{2 \la} \|h\|^2  \int_\rone \left| \phi_\la \right|^{p+1}+b(\phi_\la')^2-(\phi_\la'')^2dx  +O(\de^3).
\end{align*}
Using only the first order in $\de$ information and the fact that $I[u_\de]\geq m_b(\la)$ for all $\de\in \rone$, we conclude that 
$$
\dpr{\phi_\la}{h''''}+b\dpr{\phi_\la}{h''} -\dpr{|\phi_\la|^{p-1}\phi_\la+\om(\la) \phi_\la}{h}=0
$$
where $\om(\la) = \frac{1}{\la}\int_\rone b(\phi_\la')^2+|\phi_\la|^{p+1}-(\phi_\la'')^2dx$. Since this is true for any test function $h$, we conclude that $\phi_\la$ is a distributional solution of  the Euler-Lagrange equation \eqref{450}. According to Proposition \ref{prop:nm}, this turns out to be a solution in stronger sense, in particular $\phi_\la\in H^4(\rone)$. 

Now, using the fact that the function $g_h(\de):=I[u_\de]$ has a minimum at zero, we also conclude that $g_h''(0)\geq 0$. This is of course valid for all $h$, but in order to simplify the expression, we only look at $h: \|h\|=1$, which are orthogonal to the wave $\phi_\la$, i.e. $\dpr{h}{\phi_\la}=0$. This implies that 
$$
\dpr{h''''+b h''+\om(\la) h-p |\phi_\la|^{p-1} h}{h}\geq 0.
$$
In other words, $\dpr{\cl_+ h}{h}\geq 0$, whenever $h: \|h\|=1, \dpr{h}{\phi_\la}=0$.  This  is exactly the claim that $\cl_+|_{\{\phi_\la\}^\perp}\geq 0$.  In particular, this implies that the second smallest eigenvalue of $\cl_+$ is non-negative or $n(\cl_+)\leq 1$. On the other hand, since $\dpr{\cl_+ \phi_\la}{\phi_\la}=-(p-1) \int |\phi_\la(x)|^{p+1} dx<0$, it follows that there is a negative eigenvalue or $n(\cl_+)=1$. 
 \end{proof}

 \section{Variational construction in higher dimensions} 
 \label{sec:4} 
 In this section, we follow the approach and constructions from Section \ref{sec:3}. Most, if not all, of the steps go through essentially unchanged, save for the numerology, 
 which is of course impacted by the dimension $d$. Thus, we will be just indicating the main points, without providing full details, where the arguments follow closely the one dimensional case.    
 
 Recall that we work with the variational problem \eqref{700}. Again, we introduce 
$$
m_b(\lambda)=\inf_{\phi\in H^2\cap L^{p+1} ,\left\lVert \phi \right\rVert_2^2=\lambda}I[\phi].
$$
Note  that since 
\begin{equation}
\label{733}
 \frac{m_b(\la)}{\lambda} =\inf_{\left\lVert \phi \right\rVert_2^2=1}
 \left \{  \frac{1}{2}\int_{\rd}[|\De \phi (x)|^2-\eps |\vec{b}|^2 |\p_{x_1}\phi(x)|^2]dx-\frac{\la^{\f{p-1}{2}}}{p+1}\int_{\rd}|\phi(x)|^{p+1}dx  \right \}, 
 \end{equation}
 the function $\la\to \f{m_b(\la)}{\la}$ is non-increasing, we conclude that  $m_b(\la)$ is differentiable a.e. 
 As we have previously discussed, the case $\eps=1$ seems much more technically complicated, and it is to be addressed in a subsequent publication \cite{KST}. 
 
 We concentrate on the case $\eps=-1$. 
We have the following regarding $m_{\vec{b}, \la}$. 
\begin{lemma}
\label{le:40}
Let $\eps=-1$. Then, 
\begin{itemize}
\item For $p\in (1, 1+\f{8}{d+1})$ and $\la>0$, we have that $-\infty<m_{\vec{b}}(\la)<0$, 
\item For $p\in (1, 1+\f{8}{d})$, $m_{\vec{b}}(\la)>-\infty$, 
\item 
For $p\geq 1+\f{8}{d}$, $m_{\vec{b}, \la}=-\infty$ for all $\la>0$. 
\end{itemize}
\end{lemma}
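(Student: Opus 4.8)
The plan is to establish the three assertions by inserting suitably scaled test functions into $I$, mirroring the one-dimensional analysis of Lemma \ref{ponefive}; the one genuinely new ingredient is that the distinguished direction $x_1$ forces an \emph{anisotropic} rescaling in order to reach the sharp threshold $p<1+\f{8}{d+1}$.

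First I would prove boundedness from below for $p<1+\f{8}{d}$, which gives the middle bullet and the lower bound in the first. Since $\eps=-1$, the quadratic part of $I$ dominates $\f12\|\De\phi\|_{L^2}^2$. Applying the sharp GNS inequality \eqref{71} with $q=p+1$ --- admissible in the whole range $p<1+\f{8}{d}$, since then $p+1<\f{2d}{d-4}$ when $d\ge 5$ --- one gets, on the constraint surface $\|\phi\|_{L^2}^2=\la$,
$$
\|\phi\|_{L^{p+1}}^{p+1}\le C_{p,d}\,\|\De\phi\|_{L^2}^{\f{d(p-1)}{4}}\,\la^{\f12\left(p+1-\f{d(p-1)}{4}\right)},
$$
hence $I[\phi]\ge \f12\|\De\phi\|_{L^2}^2-C_{p,d,\la}\|\De\phi\|_{L^2}^{\f{d(p-1)}{4}}$. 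As $p<1+\f{8}{d}$ is exactly the condition $\f{d(p-1)}{4}<2$, the right-hand side, viewed as a function of $t=\|\De\phi\|_{L^2}\ge 0$, has a finite minimum, so $m_{\vec b}(\la)>-\infty$.

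For strict negativity when $p<1+\f{8}{d+1}$, fix $\phi$ with $\|\phi\|_{L^2}^2=\la$ and, writing $x=(x_1,x')$ with $x'\in\rone^{d-1}$, set $\phi_\eps(x):=\eps^{(d+1)/4}\phi(\eps x_1,\sqrt{\eps}\,x')$, which keeps $\|\phi_\eps\|_{L^2}^2=\la$. A direct computation (using $\|\De\psi\|_{L^2}^2=\sum_{i,j}\|\p_i\p_j\psi\|_{L^2}^2$) gives
$$
I[\phi_\eps]=\f{\eps^2}{2}\big(\|\De_{x'}\phi\|_{L^2}^2+|\vec b|^2\|\p_{x_1}\phi\|_{L^2}^2\big)+O(\eps^3)-\f{\eps^{(d+1)(p-1)/4}}{p+1}\|\phi\|_{L^{p+1}}^{p+1},
$$
where $\De_{x'}$ is the Laplacian in $x'$. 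The exponent of the nonlinear term is $<2$ precisely when $(d+1)(p-1)<8$, i.e. $p<1+\f{8}{d+1}$, so that term dominates as $\eps\to 0^+$; hence $I[\phi_\eps]<0$ for $\eps$ small and $m_{\vec b}(\la)\le I[\phi_\eps]<0$. (For $d=1$ this is the familiar $\phi_\eps(x)=\eps^{1/2}\phi(\eps x)$, with threshold $p<5$.)

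Finally, for $p\ge 1+\f{8}{d}$ I would use the isotropic rescaling $\phi_\eps(x):=\eps^{d/2}\phi(\eps x)$, again preserving the constraint, for which
$$
I[\phi_\eps]=\f{\eps^4}{2}\|\De\phi\|_{L^2}^2+\f{\eps^2|\vec b|^2}{2}\|\p_{x_1}\phi\|_{L^2}^2-\f{\eps^{d(p-1)/2}}{p+1}\|\phi\|_{L^{p+1}}^{p+1}.
$$
If $p>1+\f{8}{d}$, then $\f{d(p-1)}{2}>4$, the nonlinear term dominates as $\eps\to\infty$, and $I[\phi_\eps]\to-\infty$, so $m_{\vec b}(\la)=-\infty$. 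The case I expect to be the main obstacle is the exact critical exponent $p=1+\f{8}{d}$, where $\f{d(p-1)}{2}=4$ and the two quartic terms compete; here I would proceed as in the borderline one-dimensional case $p=9$ of Lemma \ref{ponefive}: from \eqref{733}, freezing $\phi$ and letting $\la$ grow shows $m_{\vec b}(\la)<0$ for $\la$ large, so any near-minimizer $\phi$ at such $\la$ has $\f12\|\De\phi\|_{L^2}^2-\f1{p+1}\|\phi\|_{L^{p+1}}^{p+1}<0$, whence $I[\phi_\eps]\le \eps^4 I[\phi]\to-\infty$ for $\eps\ge 1$, forcing $m_{\vec b}(\la)=-\infty$. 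Outside this borderline exponent the argument is routine bookkeeping of scaling exponents together with one invocation of \eqref{71}.
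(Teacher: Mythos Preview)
The proposal is correct and follows essentially the same route as the paper: your anisotropic rescaling $\phi_\eps(x)=\eps^{(d+1)/4}\phi(\eps x_1,\sqrt\eps\,x')$ is just a reparametrization (set $\de=\sqrt\eps$) of the paper's choice $\phi_\de=\de^{(d+1)/2}\phi(\de^2 x_1,\de x')$, and the isotropic scaling for $p>1+\tfrac{8}{d}$, the GNS lower bound, and the treatment of the critical endpoint $p=1+\tfrac{8}{d}$ via the borderline argument of Lemma~\ref{ponefive} all coincide with the paper's proof.
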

\begin{proof}
The proof goes through the same steps as in Lemma \ref{ponefive}. 
Pick $\phi_\de=\de^{\f{d+1}{2}} \phi(\de^2 x_1, \de x')$, with $\|\phi\|_{L^2}^2=\la$. Clearly, $\|\phi_\de\|_{L^2}^2=\la$, while 
$$
I[\phi_\de]=\f{\de^4 \|\De' \phi\|^2+\de^8 \|\p_{x_1 x_1} \phi\|_{L^2}^2}{2} + \f{ |\vec{b}|^2 \|\phi_{x_1}\|^2}{2} \de^4 - \f{\|\phi\|_{L^{p+1}}^{p+1}}{p+1} \de^{\f{(d+1)(p-1)}{2}}.
$$
Clearly, for $\de$ small enough and $p<1+\f{8}{d+1}$, the last term is dominant, so $m_b(\la)<0$. Similarly, using $ \psi_\de = \de^{\frac{d}{2}}\phi(\de x) $ we obtain 
$$
 I[\psi_\de ] = \frac{\de^4 \norm{\Delta \phi}^2+ \de^2  |\vec{b} |^2 \norm{\phi_{x_1}}^2}{2}- \frac{\norm{\phi}_{p+1}^{L^{p+1}}}{p+1}\de^{\frac{d(p-1)}{2}},  
$$
and taking the limit $\de\to \infty$ yields 
$m_b(\la)=-\infty$, for $p>1+\f{8}{d}$. 

Next, by GNS, we have that 
 $$
 \|\phi\|_{L^{p+1}(\rd)}\leq C_p \|\phi\|_{\dot{H}^{d(\f{1}{2}-\f{1}{p+1})}}\leq C_p 
 \|\phi\|_{L^2}^{1-d(\f{1}{4}-\f{1}{2(p+1)})}  \|\De \phi\|_{L^2}^{d(\f{1}{4}-\f{1}{2(p+1)})}.
 $$
Thus,  
\begin{eqnarray*}
I[\phi] &=&  \frac{1}{2}\int_{\rd}[|\De \phi (x)|^2+|\vec{b}|^2 |\p_{x_1}\phi(x)|^2]dx-\frac{1}{p+1}\int_{\rd}|\phi(x)|^{p+1}dx  \\ 
&\geq & \frac{1}{2}\int_{\rd}|\De \phi|^2 +|\vec{b}|^2 |\p_{x_1}\phi(x)|^2dx -c_p\|\De \phi\|_{L^2}^{d\f{p-1}{4}}\|\phi \|_{L^2}^{p+1-d \frac{p-1}{4}} \\
&\geq & \frac{1}{4} \| \De \phi \|_{L^2}^2- c_{p,\lambda,b}\|\De \phi \|_{L^2}^{d\frac{p-1}{4}} \geq -\gamma,
\end{eqnarray*} 
where in the last inequality, we have used that $p<1+\f{8}{d}$ (whence $d\f{p-1}{4}<2$) and hence $\| \De \phi \|_{L^2}^2$ is dominant. The fact that $m_b(\la)=-\infty$, when $p=1+\f{8}{d}$ follows in the same fashion as in Lemma \ref{ponefive}. 
\end{proof}
Next, we present a technical lemma. 
\begin{lemma}
\label{le:13} 
For $1+\f{8}{d+1}\leq p<1+\f{8}{d}$, there is $C_p$, so that for all functions $g$, 
\begin{equation}
\label{600}
\|g\|_{L^{p+1}(\rd)}^{p+1}\leq C_p \|g\|_{L^2}^{p-1} \int_{\rd} |\De g|^2+ |\p_{x_1} g|^2 dx
\end{equation}
For $p\in (1, 1+\f{8}{d+1})$, such an estimate cannot hold. 

\end{lemma}
\begin{proof}
We apply the  Sobolev embedding in the variables $x_1$ and then in $x'=(x_2, \ldots, x_d)$
\begin{equation}
\label{lop}
\|g\|_{L^{p+1}(\rd)} \lesssim  \||\nabla_{x'}|^{(d-1)(\f{1}{2}-\f{1}{p+1})} |\nabla_{x_1}|^{(\f{1}{2}-\f{1}{p+1})} g\|_{L^2(\rd)}.
\end{equation}
Next, by  Plancherel's, H\"older's  inequality  and Young's inequality 
\begin{eqnarray*}
& & \||\nabla_{x'}|^{(d-1)(\f{1}{2}-\f{1}{p+1})} |\nabla_{x_1}|^{(\f{1}{2}-\f{1}{p+1})} g\|_{L^2(\rd)} = \left(\int_{\rd} |\hat{g}(\xi)|^2 |\xi'|^{(d-1)(1-\f{2}{p+1})}|\xi_1|^{1-\f{2}{p+1}} d\xi\right)^{1/2} \\
&\lesssim & \|g\|_{L^2}^{\f{p-1}{p+1}} \left(\int_{\rd} |\hat{g}(\xi)|^2  |\xi'|^{(d-1)\f{p-1}{2}}|\xi_1|^{\f{p-1}{2}} d\xi \right)^{\f{1}{p+1}}\lesssim 
\|g\|_{L^2}^{\f{p-1}{p+1}} \left(\int_{\rd} |\hat{g}(\xi)|^2 [|\xi'|^4 + |\xi_1|^{\f{q'(p-1)}{2}}] d\xi \right)^{\f{1}{p+1}},
\end{eqnarray*} 
where $q=\f{8}{(d-1)(p-1)}$. Clearly, \eqref{600} follows, provided 
$
2\leq \f{q'(p-1)}{2}\leq 4.
$
Solving this inequality yields exactly $1+\f{8}{d+1}\leq p<1+\f{8}{d}$. 

If $p<1+\f{8}{d+1}$, take $\phi=\chi(\eps^2 x_1, \eps x')$ in \eqref{600}. Assuming the validity of \eqref{600}, we obtain a contradiction for $\eps<<1$. 
\end{proof}

The next two lemmas are  the generalizations  of Lemma \ref{le:2} and Lemma \ref{subslemma1} to higher dimensions. 
 \begin{lemma}
 \label{le:22}
If  $\eps=-1$ and $p\in [1+\f{8}{d+1},1+\f{8}{d}) $,  then there exists a finite number $ \la_{\vec{b},p}>0$ such that
\begin{itemize}
\item  for all $ \la\leq\la_{\vec{b},p} $ we have  $ m_b(\la)=0 $,
\item  for all $ \la>\la_p $  we have $ -\infty<m_b(\la)<0 $. 
\end{itemize} 
\end{lemma}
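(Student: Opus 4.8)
The plan is to mirror the proof of Lemma~\ref{le:2}, with the one–dimensional GNS/coercivity bound replaced by the anisotropic estimate \eqref{600} of Lemma~\ref{le:13}. Throughout write $Q[\phi]:=\int_{\rd}|\De\phi|^2+|\vec{b}|^2|\p_{x_1}\phi|^2\,dx$ for the quadratic part of $2I$; since $\eps=-1$ this is a nonnegative form, and we may assume $\vec{b}\neq 0$ (the case $\vec{b}=0$ being simpler still). First I would record $m_b(\la)\le 0$ for all $\la>0$: taking $\psi_\de=\de^{d/2}\phi(\de x)$ with $\|\phi\|_2^2=\la$ as in Lemma~\ref{le:40} keeps $\|\psi_\de\|_2^2=\la$ and gives $I[\psi_\de]=\f{1}{2}\big(\de^4\|\De\phi\|_2^2+\de^2|\vec{b}|^2\|\p_{x_1}\phi\|_2^2\big)-\f{\de^{d(p-1)/2}}{p+1}\|\phi\|_{p+1}^{p+1}\to 0$ as $\de\to0$, whence $m_b(\la)\le 0$.

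Next I would treat triviality for small $\la$. Since $Q[\phi]\ge \min(1,|\vec{b}|^2)\int_{\rd}\big(|\De\phi|^2+|\p_{x_1}\phi|^2\big)\,dx$, the estimate \eqref{600} upgrades to $\|\phi\|_{p+1}^{p+1}\le \wt{C}_{p,\vec{b}}\,\|\phi\|_2^{p-1}\,Q[\phi]$ with $\wt{C}_{p,\vec{b}}:=C_p/\min(1,|\vec{b}|^2)$. Plugging this into the rescaled functional \eqref{733}, for $\|\phi\|_2^2=1$ one gets $J[\phi]\ge\big(\f{1}{2}-\f{\wt{C}_{p,\vec{b}}}{p+1}\la^{(p-1)/2}\big)Q[\phi]\ge 0$ whenever $\la\le \gamma_{p,\vec{b}}:=\big(\f{p+1}{2\wt{C}_{p,\vec{b}}}\big)^{2/(p-1)}$; combined with $m_b(\la)\le 0$ this yields $m_b(\la)=0$ on $(0,\gamma_{p,\vec{b}}]$. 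For the nontriviality at large $\la$, fix any $\phi$ with $\|\phi\|_2^2=1$ and $\|\phi\|_{p+1}>0$; by \eqref{733}, $m_b(\la)/\la\le \f{1}{2}Q[\phi]-\f{\la^{(p-1)/2}}{p+1}\|\phi\|_{p+1}^{p+1}<0$ for $\la$ large, while $m_b(\la)>-\infty$ for $p<1+\f{8}{d}$ by Lemma~\ref{le:40}. Then I would set $\la_{\vec{b},p}:=\sup\{\gamma>0:\,m_b(\la)=0\ \text{for all}\ \la\le\gamma\}$: the small–$\la$ step gives $\la_{\vec{b},p}\ge\gamma_{p,\vec{b}}>0$ and the large–$\la$ step gives $\la_{\vec{b},p}<\infty$. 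Finally, since $\la\mapsto m_b(\la)/\la$ is non-increasing by \eqref{733} and $m_b\le 0$, the set $\{m_b=0\}$ is an interval with left endpoint $0$; for $\la>\la_{\vec{b},p}$ there is $\mu\le\la$ with $m_b(\mu)<0$, so $m_b(\la)/\la\le m_b(\mu)/\mu<0$, and the endpoint value $m_b(\la_{\vec{b},p})=0$ follows from left–continuity of $m_b$ (a consequence of the monotonicity of $m_b(\cdot)/\cdot$ together with $m_b\equiv 0$ on $(0,\la_{\vec{b},p})$).

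I expect the only genuinely delicate point to be the coercivity/GNS step: one must verify that for $\eps=-1$ the quadratic part of $I$ is an honest positive form equivalent to $Q[\phi]$, so that \eqref{600} applies after absorbing the weight $|\vec{b}|^2$ — this is exactly where the hypotheses $\eps=-1$ and $p\in[1+\f{8}{d+1},1+\f{8}{d})$ enter (the latter through the range of validity of \eqref{600} in Lemma~\ref{le:13}). Everything else is bookkeeping identical to the one–dimensional Lemma~\ref{le:2}.
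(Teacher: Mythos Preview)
Your proposal is correct and follows essentially the same route as the paper: both use the anisotropic GNS estimate \eqref{600} from Lemma~\ref{le:13} (after absorbing the weight $|\vec{b}|^2$) to get the coercivity bound \eqref{des} and hence $m_b(\la)=0$ for small $\la$, and both use \eqref{733} with a fixed test function to force $m_b(\la)<0$ for large $\la$. Your treatment is in fact slightly more careful than the paper's sketch, since you explicitly invoke the monotonicity of $\la\mapsto m_b(\la)/\la$ to pin down the interval structure and the endpoint value, whereas the paper simply writes down an explicit threshold and leaves the dichotomy implicit.
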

\begin{proof}
The inequality $m(\la)\leq 0$ follows in the same way as in Lemma \ref{le:2}. Then, by Lemma \ref{le:13}, we have  
\begin{equation}
\label{des}
\inf_{\phi\neq 0} \f{\|\phi\|_{L^2}^{p-1} \int_{\rd} [|\De \phi|^2-\eps |\vec{b}|^2 |\phi_{x_1}|^2] dx}{\int_{\rd} |\phi|^{p+1} dx} \geq c_{\vec{b}, p}>0.
\end{equation}
Thus, for all $\phi\in H^2(\rd)$, we have 
$$
 \int_{\rd} [|\De \phi|^2-\eps |\vec{b}|^2 |\phi_{x_1}|^2] dx -\f{c_{\vec{b}p}}{\la^{p-1}} \int_{\rd} |\phi|^{p+1} dx\geq 0, 
$$
which by \eqref{733} implies that  for $\la\leq \la_{\vec{b},p}:=\left(\f{c_{\vec{b},p}(p+1)}{2}\right)^{\f{2}{p-1}}$, $m_{\vec{b}}(\la)\geq 0$. Since we always have the opposite inequality, this implies $m_{\vec{b}}(\la)=0$, when $\la$ is small enough. 
Note that for very large $\la$, the quantity in \eqref{733} is clearly negative, so this implies that $\la_{\vec{b}, p}<\infty$. 
\end{proof}
 
 The next lemma is the generalization of Lemma \ref{subslemma1} to the higher dimensional case. Its proof follows an identical arguments and it is thus omitted. 
 \begin{lemma}\label{le:44}
Suppose $ \eps=-1$, $ p\in (1, 1+\f{8}{d}) $ and $ -\infty<m_b(\la)<0 $. That is 
\begin{itemize}
\item $p\in (1, 1+\f{8}{d+1}),  \la>0$
\item $p\in [1+\f{8}{d+1},1+\f{8}{d})$ and $\la>\la_{\vec{b}, p}$. 
\end{itemize}
 Let $ \phi_k $ be a minimizing sequence for the constrained minimization problem \eqref{700}. Then, there exists a subsequence $ \phi_k $ such that:
$$
 \int_{\rd} |\De \phi_k(x)|^2 dx\to L_1,  \int_{\rd}|\p_{x_1} \phi_k(x)|^2dx \to L_2, \  \int_{\rd}|\phi_k(x)|^{p+1}dx\to L_3,  
$$
where $ L_1>0 $, $ L_2 > 0 $ and $ L_3 > 0 $.
\end{lemma}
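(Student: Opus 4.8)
The plan is to follow the proof of Lemma~\ref{subslemma1} essentially verbatim, replacing the one-dimensional interpolation estimates by their anisotropic $d$-dimensional analogues. First I would record that, by the coercivity estimate obtained in the proof of Lemma~\ref{le:40} for the case $\eps=-1$, namely
\[
I[\phi]\geq \frac{1}{4}\norm{\De\phi}_{L^2}^2 - c_{p,\la,b}\,\norm{\De\phi}_{L^2}^{d\frac{p-1}{4}},
\]
which is effective precisely because $p<1+\frac{8}{d}$ forces $d\frac{p-1}{4}<2$ (so the quartic term dominates), a minimizing sequence $\phi_k$ satisfies $\sup_k\norm{\De\phi_k}_{L^2}<\infty$, since $I[\phi_k]$ is bounded above and below. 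Combining this with the constraint $\norm{\phi_k}_{L^2}^2=\la$, the elementary bound $\norm{\p_{x_1}\phi_k}_{L^2}^2\leq\norm{\De\phi_k}_{L^2}\norm{\phi_k}_{L^2}$, and the GNS bound for $\norm{\phi_k}_{L^{p+1}}$ already used in Lemma~\ref{le:40}, the three sequences $\norm{\De\phi_k}_{L^2}^2$, $\norm{\p_{x_1}\phi_k}_{L^2}^2$, $\norm{\phi_k}_{L^{p+1}}^{p+1}$ are all bounded; passing to a subsequence finitely many times, I may assume they converge to $L_1,L_2,L_3\geq 0$.

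It then remains to show $L_1,L_2,L_3>0$. For $L_3$: if $L_3=0$, then, since $\eps=-1$ turns the $\p_{x_1}$-term of the quadratic part of $I$ into a nonnegative contribution, $m_{\vec{b}}(\la)=\lim_k I[\phi_k]=\lim_k\big[\frac{1}{2}\norm{\De\phi_k}_{L^2}^2+\frac{1}{2}|\vec{b}|^2\norm{\p_{x_1}\phi_k}_{L^2}^2\big]\geq 0$, contradicting the hypothesis $m_{\vec{b}}(\la)<0$; hence $L_3>0$. For $L_1$ and $L_2$ I would argue by contradiction using an anisotropic Gagliardo--Nirenberg--Sobolev inequality of the form
\[
\norm{g}_{L^{p+1}(\rd)}\lesssim \norm{\De g}_{L^2}^{\theta}\,\norm{\p_{x_1}g}_{L^2}^{s}\,\norm{g}_{L^2}^{1-\theta-s},\qquad s=\frac{1}{2}-\frac{1}{p+1}>0,\ \ \theta=\frac{(d-1)s}{2},
\]
valid throughout $p<1+\frac{8}{d}$; this follows by applying the mixed Sobolev embedding \eqref{lop} and then splitting the frequency weight $|\xi'|^{2(d-1)s}|\xi_1|^{2s}\lesssim|\xi|^{2(d-1)s}|\xi_1|^{2s}$ by H\"older against powers of $|\xi|^4$, $|\xi_1|^2$ and $1$ — exactly the manipulation used in the proof of Lemma~\ref{le:13}, the exponent constraint $s(d+1)\leq 2$ being satisfied precisely on the range $p<1+\frac{8}{d}$. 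Since $\theta,s>0$ and $\norm{\phi_k}_{L^2}^2=\la$ is fixed, $L_1=0$ or $L_2=0$ would force $\norm{\phi_k}_{L^{p+1}}\to 0$, i.e. $L_3=0$, which has just been ruled out. Thus $L_1,L_2,L_3>0$.

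The only ingredient genuinely absent from the one-dimensional argument — and hence the main point requiring care — is the anisotropic GNS inequality above, which is what makes the deduction $L_2>0$ work: in one dimension $\norm{\phi'}_{L^2}$ alone controls $\norm{\phi}_{L^{p+1}}$ through \eqref{pplusoneest2}, whereas in $\rd$ one must feed in both the full $\De$-bound and the single-direction $\p_{x_1}$-bound to recover a strictly positive power of $\norm{\p_{x_1}g}_{L^2}$. Verifying that its admissible range is exactly $p<1+\frac{8}{d}$ is the sole mild technical obstacle; every other step (boundedness of the minimizing sequence, iterated subsequence extraction, and the contradiction arguments forcing $L_1,L_2,L_3>0$) is a direct transcription of the corresponding steps in Section~\ref{sec:3}.
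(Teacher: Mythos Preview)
Your proof is correct and follows precisely the route the paper indicates (the paper omits the proof entirely, saying only that it ``follows similar path'' to Lemma~\ref{subslemma1}); you have supplied the details the paper leaves implicit. One minor inaccuracy: the constraint $s(d+1)\leq 2$ is \emph{implied by}, but not equivalent to, $p<1+\frac{8}{d}$ (it actually corresponds to the larger range $p\leq\frac{d+5}{d-3}$ when $d>3$), so ``precisely'' is too strong --- but since you only need the implication in one direction, this does not affect the argument.
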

 
 \subsection{Existence of minimizers} 
 Before we go ahead with the  existence of minimizers, we need an analog of Lemma \ref{le:7}. 
 Their proofs in the higher dimensional case goes in an identical manner. 
 \begin{lemma}
 \label{707}
 Let $1<p<1+\f{8}{d}$ and $\la>0$. Then $\la\to m_{\vec{b}, p}(\la)$ is strictly subadditive. That is, for every $\al\in (0, \la)$, 
 $$
 m_{\vec{b}, p}(\la)<m_{\vec{b}, p}(\al)+m_{\vec{b}, p}(\la-\al)
 $$
 In addition, $\la\to m_{\vec{b}, p}(\la)$ is twice differentiable a.e. 
 \end{lemma}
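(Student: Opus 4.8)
The plan is to follow the proof of Lemma~\ref{le:7} essentially verbatim, the only new input being the scaling identity \eqref{733} for the higher‑dimensional functional. First I would fix $\la>0$ and $\al\in(0,\la)$ and, relabelling $\al$ and $\la-\al$ if necessary, assume $\al\geq\la-\al$. Taking a minimizing sequence $\phi_k$ for \eqref{700} at level $\al$, Lemma~\ref{le:44} (if $\eps=-1$) or Lemma~\ref{subs54} (if $\eps=1$) guarantees $\|\phi_k\|_{L^{p+1}}^{p+1}\to L_3>0$; hence a minimizing sequence cannot shed its $L^{p+1}$ mass. Reading off \eqref{733} at the levels $\la$ and $\al$ and using $\la^{(p-1)/2}>\al^{(p-1)/2}$ together with the strict positivity of the $L^{p+1}$ contribution to the infimum, I would conclude $m_{\vec{b},p}(\la)/\la<m_{\vec{b},p}(\al)/\al$ whenever $m_{\vec{b},p}(\al)<0$; that is, $\la\mapsto m_{\vec{b},p}(\la)/\la$ is strictly decreasing on the set where it is negative (it is non‑increasing everywhere by the remark after \eqref{733}).

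With this monotonicity in hand, strict subadditivity follows exactly as in Lemma~\ref{le:7}. When $m_{\vec{b},p}(\al)<0$ and $m_{\vec{b},p}(\la-\al)<0$,
\begin{equation*}
m_{\vec{b},p}(\la)<\tfrac{\la}{\al}\,m_{\vec{b},p}(\al)=m_{\vec{b},p}(\al)+\tfrac{\la-\al}{\al}\,m_{\vec{b},p}(\al)\leq m_{\vec{b},p}(\al)+m_{\vec{b},p}(\la-\al),
\end{equation*}
the last step using $m_{\vec{b},p}(\al)/\al\leq m_{\vec{b},p}(\la-\al)/(\la-\al)$ (valid since $\al\geq\la-\al$ and $\la\mapsto m_{\vec{b},p}(\la)/\la$ is non‑increasing). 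When $\eps=-1$ and $p\in[1+\f{8}{d+1},1+\f{8}{d})$ one repeats the three‑way case split of Lemma~\ref{le:7} to cover the situations where $m_{\vec{b},p}$ vanishes on $(0,\la_{\vec{b},p}]$ (Lemma~\ref{le:22}): if both $m_{\vec{b},p}(\al)$ and $m_{\vec{b},p}(\la-\al)$ vanish the claim reduces to $m_{\vec{b},p}(\la)<0$, while if only $m_{\vec{b},p}(\la-\al)=0$ one still has $m_{\vec{b},p}(\la)<\tfrac{\la}{\al}m_{\vec{b},p}(\al)<m_{\vec{b},p}(\al)$ because $m_{\vec{b},p}(\al)<0$.

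For the a.e.\ twice differentiability I would show $m_{\vec{b},p}$ is concave down and then invoke the standard fact that a concave function is locally Lipschitz (hence differentiable a.e.) with non‑increasing derivative, while a monotone function is differentiable a.e.\ by Lebesgue's theorem, so that $m_{\vec{b},p}''$ exists a.e. Concavity comes from Lemma~\ref{le:concave}: fixing $\la$ in a bounded subinterval of $\rone_+$ with a minimizer $\phi_\la$, I would test with $\phi^{\pm}:=\sqrt{1\pm\de/\la}\;\phi_\la$, so $\|\phi^{\pm}\|_{L^2}^2=\la\pm\de$ and $m_{\vec{b},p}(\la\pm\de)\leq I[\phi^{\pm}]$. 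Since the quadratic part of $I$ is homogeneous of degree two in $\phi$ and $(1+\de/\la)+(1-\de/\la)=2$, the quadratic contributions cancel in $I[\phi^+]+I[\phi^-]-2I[\phi_\la]$, leaving
\begin{equation*}
m_{\vec{b},p}(\la+\de)+m_{\vec{b},p}(\la-\de)-2m_{\vec{b},p}(\la)\leq-\tfrac{1}{p+1}\Big[(1+\tfrac{\de}{\la})^{\frac{p+1}{2}}+(1-\tfrac{\de}{\la})^{\frac{p+1}{2}}-2\Big]\int_{\rd}|\phi_\la|^{p+1}\,dx.
\end{equation*}
The bracket equals $q(q-1)(\de/\la)^2+O((\de/\la)^4)$ with $q=\tfrac{p+1}{2}>1$ and $q(q-1)>0$, so after dividing by $\de^2$ the left‑hand side is bounded above by $-\tfrac{q(q-1)}{(p+1)\la^2}\|\phi_\la\|_{L^{p+1}}^{p+1}+O(\de^2)\leq O(\de^2)$, with an implied constant uniform over bounded $\la$‑intervals thanks to a higher‑dimensional analogue of Proposition~\ref{prop:mt} (bounding $\|\phi_\la\|_{L^{p+1}}$ and $\|\phi_\la\|_{H^2}$ in terms of $\la$). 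Taking $\limsup_{\de\to0}$ verifies the hypothesis of Lemma~\ref{le:concave}, so $m_{\vec{b},p}$ is concave down on every interval on which minimizers exist; on $(0,\la_{\vec{b},p}]$, when that regime occurs, $m_{\vec{b},p}\equiv0$ is affine, and the two pieces glue into a globally concave function by continuity of $m_{\vec{b},p}$ and the slope ordering at $\la_{\vec{b},p}$.

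The main obstacle is the uniformity demanded by the ``$\sup_\la$'' in Lemma~\ref{le:concave}: one needs the minimizer‑norm bounds (the analogue of Proposition~\ref{prop:mt}) in every dimension and for the mixed‑derivative functional \eqref{700}, and one must ensure the estimate above does not degenerate as $\la\downarrow\la_{\vec{b},p}$ in the case $\eps=-1$, $p\in[1+\f{8}{d+1},1+\f{8}{d})$. Splitting the analysis at $\la_{\vec{b},p}$ and patching, as indicated, makes this harmless, and the rest is bookkeeping identical to the one‑dimensional arguments of Section~\ref{sec:3}.
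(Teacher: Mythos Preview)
Your strict-subadditivity argument is essentially the paper's: the paper states that Lemma~\ref{707} is the higher-dimensional analogue of Lemma~\ref{le:7} (and Lemma~\ref{smooth}) and that ``their proofs in the higher dimensional case go in an identical manner,'' and your use of \eqref{733}, Lemmas~\ref{le:44}/\ref{subs54}, and the three-way case split mirrors Lemma~\ref{le:7} exactly.

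For the a.e.\ twice differentiability, however, you take a genuinely different route. The paper (via Lemma~\ref{smooth}) observes that the auxiliary function
\[
g(\mu)=\inf_{\|\phi\|_{L^2}=1}\Big\{\tfrac12\int|\De\phi|^2-\eps|\vec b|^2|\p_{x_1}\phi|^2\,dx-\tfrac{\mu}{p+1}\int|\phi|^{p+1}\,dx\Big\}
\]
is an infimum of affine functions of $\mu$, hence concave; since $m_{\vec b,p}(\la)/\la=g(\la^{(p-1)/2})$ and $\la\mapsto\la^{(p-1)/2}$ is smooth, this immediately yields Lipschitz continuity and twice differentiability a.e.\ of $m_{\vec b,p}$ without ever invoking minimizers. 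Your approach instead tests with $\sqrt{1\pm\de/\la}\,\phi_\la$ and appeals to Lemma~\ref{le:concave}; this is correct, but it requires (i) the existence of minimizers $\phi_\la$ and (ii) a higher-dimensional analogue of Proposition~\ref{prop:mt} for the uniform norm bounds. In the paper's logical order these come \emph{after} Lemma~\ref{707} (strict subadditivity is used to rule out dichotomy in the existence proof), so your argument introduces a forward reference that the paper's one-line concavity-of-$g$ argument avoids. The payoff of your method is a quantitative upper bound on the second difference quotient (cf.\ \eqref{f:80} in Section~\ref{sec:6}), which the paper only derives later; the payoff of the paper's method is that it is self-contained at this stage and needs no minimizer at all.
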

 With the basic results in place, we can now proceed to establish the existence of the minimizers of \eqref{700}. Supposing 
 $$
\left\{
\begin{array}{cc}
1<p<1+\f{8}{d+1} & \la> 0 \\
1+\f{8}{d+1}\leq p<1+\f{8}{d} & \la>\la_{b,p} 
\end{array}
\right.
$$
 we take a minimizing sequence $\{\phi_k\}\subset H^2(\rd)$, with $I[\phi_k]\to m_{\vec{b}, p}(\la)$. 
 By eventually passing to a subsequence, we can without loss of generality assume, by using   Lemma \ref{le:44},  
 $$
 \frac{1}{2}\int_{\rd}|\De \phi_{n_k}(x)|^2 \to L_1,  \int_{\rd}|\p_{x_1} \phi_{n_k}(x)|^2 \to L_2  \textup{ and } 
 \int_{\rd}|\phi_{n_k}|^{p+1}dx\to L_3,  
$$
 where\footnote{For conciseness, we use $\phi_k$, instead of $\phi_{n_k}$}  $ L_1>0 $, $ L_2>0 $ and $ L_3>0 $. The next task is to show that this sequence does not split nor vanish.  
 The absence of splitting is established in the same way as  the first part of  Section \ref{minimizer}. 
 
 Next, we rule out vanishing. The proof presented in Section \ref{minimizer} works for $d=1,2,3,4$, but breaks down in $d\geq 5$, so let us present another one that works in all dimensions.  More concretely, for all $R>0$ and $y\in \rd$ and a cutoff function $\eta$ introduced in Section \ref{sec:3.2.2},  we have by the GNS inequality 
 \beqn
 \|\phi_k\|_{L^{p+1}(B(y,R))}^{p+1} &\leq &  \int_{\rd} |\phi_k(x) \eta\left(\f{|x-y|}{R}\right)|^{p+1} dx\lesssim 
 \|\phi_k \eta_R\|_{\dot{H}^{d\left(\f{1}{2}-\f{1}{p+1}\right)}}^{p+1} \lesssim \\
 &\lesssim &  \|\De[\phi_k \eta_R]\|_{L^2}^{(p+1)\f{d}{2}\left(\f{1}{2}-\f{1}{p+1}\right)} \|\phi_k \eta_R\|_{L^2}^{(p+1)-(p+1)\f{d}{2}\left(\f{1}{2}-\f{1}{p+1}\right)} 
 \eeqn
Since $p<1+\f{8}{d}$, it follows that $(p+1)\f{d}{2}\left(\f{1}{2}-\f{1}{p+1}\right)<2$. In addition $\|\phi_k \eta_R\|_{L^2}\leq \|\phi_k\|_{L^2(B(y,2R)}$, whence 
$$
 \|\phi_k\|_{L^{p+1}(B(y,R))}^{p+1}\leq C_{R, \eta}  \|\phi_k\|_{H^2(B(y, 2R))}^2 \|\phi_k\|_{L^2(B(y,2R))}^{p-1}.
$$
 So, if we assume that vanishing occurs, then for every $\ve>0$, we will be able to cover $\rd$ with balls of radius $1$,  say $B(y_j,1)$, 
 so that $\int_{B(y_j,3)} |\phi_k(x)|^2 dx<\ve$. Then, 
 \beqn
 \left\lVert \phi_k \right\rVert_{L^{p+1}(\rd)}^{p+1}  &\leq & \sum_{j=1}^{\infty}\int_{B(y_j,1)}|\phi_{k}|^{p+1}dx\leq 
 \sum_{j=1}^{\infty}C_{\eta,R}\left\lVert \phi_k \right\rVert_{H^2(B(y_j,2))}^{2}\left\lVert \phi_k \right\rVert_{L^2(B(y_j,2))}^{p-1}\leq \\
 &\leq& 
 10  C_{\eta,R} \ve^{\f{p-1}{2}}\left\lVert \phi_k \right\rVert_{H^{2}(\rd)}^2.
 \eeqn 
 Clearly, since $ \left\lVert \phi_k \right\rVert_{H^{2}(\rd)}$ is uniformly bounded in $k$, we conclude that $\|\phi_k\|_{L^{p+1}}\to 0$, which is in a 
  contradiction with $\lim_k \int_{\rd}|\phi_{k}|^{p+1}dx\to L_3>0$. 
  
  From here, it follows that the sequence $\rho_k=|\phi_k(x)|^2$ is tight and the existence of the minimizer is done as in Section \ref{sec:3.2.3}. 
 
 The Euler-Lagrange equation, together with the appropriate properties of the linearized operators is done similar to Proposition \ref{prop:14}. 
  \begin{proposition}
 \label{prop:144}
 Let $p\in (1,1+\f{8}{d}), \la>0$,  be so that  
 \begin{itemize}
 \item $1<p<1+\f{8}{d+1}, \la>0$
 \item $1+\f{8}{d+1} \leq p<1+\f{8}{d},  \la>\la_{b,p}>0$. 
 \end{itemize}
 Then, there exists a function $\om(\la)>0$, so that  
 the minimizer of the constrained minimization problem \eqref{700} $\phi=\phi_\la$ satisfies the Euler-Lagrange equation
 \begin{equation}
 \label{4501}
 \De^2 \phi_\la+ \eps |\vec{b}|^2 \p_{x_1}^2 \phi_{\la} -|\phi_\la|^{p-1}\phi_\la+\om(\la)\phi_{\la}=0
 \end{equation}
In addition, $n(\cl_+)=1$, that is $\cl_+$ has exactly one negative eigenvalue. Finally,   $\cl_-\geq 0$, with a simple  eigenfunction at zero, i.e. $Ker[\cl_-]=span[\phi_\la]$.  \end{proposition}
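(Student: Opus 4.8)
The whole argument transcribes that of Proposition \ref{prop:14}, with the one–dimensional numerology replaced by the $d$–dimensional exponents already used in Lemmas \ref{le:40}--\ref{subs54}, plus one genuinely new ingredient for the operator $\cl_-$. First I would derive the Euler--Lagrange equation by a first–variation computation: for a test function $h$ put $u_\de=\sqrt{\la}\,\f{\phi_\la+\de h}{\norm{\phi_\la+\de h}}$, which satisfies the constraint in \eqref{700}, and expand $I[u_\de]$ in powers of $\de$ exactly as in Proposition \ref{prop:14}, the only changes being $\int\phi''h''\rightsquigarrow\int\De\phi_\la\,\De h$ and $b\int\phi'h'\rightsquigarrow\eps|\vec b|^2\int\p_{x_1}\phi_\la\,\p_{x_1}h$. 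Since $I[u_\de]\ge m_b(\la)=I[\phi_\la]$ for every $\de\in\rone$, the coefficient of $\de$ must vanish for all $h$, giving that $\phi_\la$ is a distributional solution of \eqref{4501} with
$$\om(\la)=\f{1}{\la}\int_{\rd}\big(\eps|\vec b|^2|\p_{x_1}\phi_\la|^2+|\phi_\la|^{p+1}-|\De\phi_\la|^2\big)\,dx .$$
Then I would bootstrap, following Proposition \ref{prop:nm}, to get $\phi_\la\in H^4(\rd)\cap L^\infty(\rd)$ and hence a classical solution; this step requires $\De^2+\eps|\vec b|^2\p_{x_1}^2+\om$ to be invertible on $L^2(\rd)$, which is automatic for $\eps=-1$ (its symbol is $\ge\om>0$) and, for $\eps=1$, holds once $\om$ exceeds the bottom of its essential spectrum — a bound I establish next.

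\textbf{Positivity of $\om$ and the $\eps=1$ bound.} Testing \eqref{4501} against $\phi_\la$ gives $\int_{\rd}|\De\phi_\la|^2-\eps|\vec b|^2|\p_{x_1}\phi_\la|^2\,dx=\int_{\rd}|\phi_\la|^{p+1}\,dx-\om\la$, while $I[\phi_\la]=m_b(\la)$ rewrites as $\int_{\rd}|\De\phi_\la|^2-\eps|\vec b|^2|\p_{x_1}\phi_\la|^2\,dx=2m_b(\la)+\tfrac{2}{p+1}\int_{\rd}|\phi_\la|^{p+1}\,dx$. Eliminating the quadratic term,
$$\om\la=\f{p-1}{p+1}\int_{\rd}|\phi_\la|^{p+1}\,dx-2\,m_b(\la)>0,$$
since $p>1$, $\int_{\rd}|\phi_\la|^{p+1}>0$, and $m_b(\la)<0$ in the range considered (Lemmas \ref{le:40}, \ref{le:22}, \ref{le:55}). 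For $\eps=1$ one gets more: feeding into \eqref{733} an $L^2$–normalized sequence $\hat\phi_n$ concentrating at a point $\xi$ with $\xi'=0$ and $4\pi^2\xi_1^2=|\vec b|^2/2$ (so that the Fourier symbol $16\pi^4|\xi|^4-4\pi^2|\vec b|^2\xi_1^2$ attains its minimum $-|\vec b|^4/4$, while $\norm{\phi_n}_{L^{p+1}}\to0$) shows $m_b(\la)\le-|\vec b|^4\la/8$; combined with the identity above this yields $\om>|\vec b|^4/4$, which is exactly the invertibility threshold used in the bootstrap.

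\textbf{The two spectral claims.} For $n(\cl_+)=1$: the function $g_h(\de):=I[u_\de]$ has a minimum at $\de=0$, so $g_h''(0)\ge0$; restricting to $h$ with $\norm h_{L^2}=1$ and $\dpr{h}{\phi_\la}=0$ makes all the cross terms in the expansion disappear and leaves $\dpr{\cl_+h}{h}\ge0$, hence $\cl_+|_{\{\phi_\la\}^\perp}\ge0$ and $n(\cl_+)\le1$; since $\dpr{\cl_+\phi_\la}{\phi_\la}=-(p-1)\int_{\rd}|\phi_\la|^{p+1}\,dx<0$ we also have $n(\cl_+)\ge1$. For the $\cl_-$ statement — which is the new point, as the Kawahara problem has no $\cl_-$ — note first that \eqref{4501} says precisely $\cl_-\phi_\la=0$, so $\phi_\la\in Ker[\cl_-]$. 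Given $h\in H^4(\rd)$, write $h=c\phi_\la+h^\perp$ with $\dpr{h^\perp}{\phi_\la}=0$; because $\cl_-\phi_\la=0$ and $\cl_-$ is self-adjoint, the mixed terms vanish, and since $\cl_-=\cl_++(p-1)|\phi_\la|^{p-1}\ge\cl_+$,
$$\dpr{\cl_-h}{h}=\dpr{\cl_-h^\perp}{h^\perp}=\dpr{\cl_+h^\perp}{h^\perp}+(p-1)\int_{\rd}|\phi_\la|^{p-1}|h^\perp|^2\,dx\ge0 ,$$
using $\cl_+|_{\{\phi_\la\}^\perp}\ge0$; hence $\cl_-\ge0$. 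If moreover $\cl_-h=0$ then equality holds throughout, forcing $\int_{\rd}|\phi_\la|^{p-1}|h^\perp|^2\,dx=0$; but $\phi_\la$ is a nontrivial solution of the (principally elliptic) equation \eqref{4501}, so by unique continuation it vanishes only on a set with empty interior, whence $h^\perp$ vanishes on a dense open set and, being continuous, $h^\perp\equiv0$. Thus $Ker[\cl_-]=span[\phi_\la]$.

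\textbf{Main obstacle.} Everything except the $\cl_-$ computation is a line–by–line copy of Section \ref{sec:3} with adjusted GNS exponents, and the $\cl_-$ computation is short once one uses $\cl_-\ge\cl_+$ together with $\cl_-\phi_\la=0$. The only genuinely delicate point is the $\eps=1$ case of the regularity step: one cannot invoke invertibility of $\De^2+|\vec b|^2\p_{x_1}^2+\om$ before knowing $\om>|\vec b|^4/4$, and that bound itself rests on the a priori inequality $m_b(\la)\le-|\vec b|^4\la/8$ coming from the Fourier-concentration sequence. Sorting out this dependency is what makes the higher-dimensional $\eps=1$ case slightly more than bookkeeping.
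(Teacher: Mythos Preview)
Your proof is correct and follows essentially the same route as the paper: the Euler--Lagrange derivation and the $n(\cl_+)=1$ argument are verbatim transcriptions of Proposition \ref{prop:14}, and your added detail on $\om(\la)>0$ (and the $\eps=1$ threshold $\om>|\vec b|^4/4$ via $m_b(\la)\le -|\vec b|^4\la/8$) is handled by the paper elsewhere. For $\cl_-$ the paper argues by contradiction --- if $\cl_-\psi\le 0$ with $\psi\perp\phi_\la$, then the strict inequality $\cl_+<\cl_-$ forces $\dpr{\cl_+\psi}{\psi}<0$, contradicting $\cl_+|_{\{\phi_\la\}^\perp}\ge 0$ --- whereas you compute directly via the orthogonal split $h=c\phi_\la+h^\perp$; both arguments rest on the same two facts ($\cl_+|_{\{\phi_\la\}^\perp}\ge0$ and $\cl_--\cl_+=(p-1)|\phi_\la|^{p-1}$) and both ultimately need that $\phi_\la$ cannot vanish on an open set to conclude simplicity of $Ker[\cl_-]$.
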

As we mentioned above, the proof goes along the lines of Proposition \ref{prop:14}. The only new element are the statements about $\cl_-$, which we now prove. 
  Note that by direct inspection, $\cl_-[\phi_\la]=0$, by \eqref{4501}, so zero is an eigenvalue. 
 Assuming that there is a negative eigenvalue, say $\cl_-[\psi]=-\si^2 \psi, \|\psi\|=1$, we clearly would have $\psi\perp \phi_\la$. In addition, since\footnote{This is an obvious statement, once we realize that $\phi_\la$ cannot vanish on an interval. Indeed, otherwise, since it solves the fourth order equation \eqref{4501}, it  follows  that $\phi_\la$ is trivial, which it is not.} $\cl_+<\cl_-$, 
 \begin{eqnarray*}
 & &  \dpr{\cl_+ \psi}{\psi}< \dpr{\cl_- \psi}{\psi}=-\si^2 \\
 & &  \dpr{\cl_+ \phi_\la}{\phi_\la}<0.
 \end{eqnarray*}
 This would force $n(\cl_+)\geq 2$, a contradiction. Thus, $\cl_-\geq 0$. Finally, $0$ is a simple eigenvalue of $\cl_-$ along the same line of reasoning. Indeed, take $\psi: \cl_-\psi=0, \psi\perp \phi_\la$. Again, we conclude $n(\cl_+)\geq 2$, which leads to a contradiction.

 \subsection{Discussion of the proof of Theorem \ref{theo:NLS2}: existence of the waves} 
 We do not provide an extensive review of the existence claims in Theorem \ref{theo:NLS2} ,as this would be repetitious, but we would like to make  a few notable points. We work with the variational problem \eqref{701}, where we set up $b=-1$ for simplicity as this will not affect the calculations.  
 
 Our goal in this section is to clarify the range of indices in $p$. More concretely, we have the following analogue of Lemmas \ref{le:13}.  
 \begin{lemma}
\label{le:133} 
For $1+\f{4}{d}\leq p<1+\f{8}{d}$, 
\begin{equation}
\label{612}
\|g\|_{L^{p+1}(\rd)}^{p+1}\leq C_p \|g\|_{L^2}^{p-1} \int_{\rd} |\De g|^2+ |\nabla  g|^2 dx
\end{equation}
For $p\in (1, 1+\f{4}{d})$, such an estimate cannot hold. 
\end{lemma}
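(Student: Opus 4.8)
The plan is to reproduce, in the isotropic setting forced by the pure Laplacian term, the Fourier-analytic argument behind Lemma \ref{le:13} (equivalently, the $d$-dimensional version of Lemma \ref{le:GNS}). First I would rewrite the right-hand side of \eqref{612} on the Fourier side as $\int_{\rd}|\hat g(\xi)|^2(|\xi|^4+|\xi|^2)\,d\xi$ up to a harmless dimensional constant, and recall the sharp homogeneous Sobolev embedding $\dot H^{s}(\rd)\hookrightarrow L^{p+1}(\rd)$ with $s=d\left(\f12-\f1{p+1}\right)$, valid for every $p>1$. This already gives
\[
\|g\|_{L^{p+1}}^{2}\lesssim \int_{\rd}|\hat g(\xi)|^{2}|\xi|^{d\left(1-\f{2}{p+1}\right)}\,d\xi .
\]

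Next I would interpolate by Hölder's inequality in the measure $|\hat g(\xi)|^{2}\,d\xi$, with exponents $\f{p+1}{p-1}$ and $\f{p+1}{2}$; since $d\left(1-\f{2}{p+1}\right)\cdot\f{p+1}{2}=\f{d(p-1)}{2}$, this yields
\[
\int_{\rd}|\hat g(\xi)|^{2}|\xi|^{d\left(1-\f{2}{p+1}\right)}\,d\xi\le \|g\|_{L^{2}}^{2\f{p-1}{p+1}}\left(\int_{\rd}|\hat g(\xi)|^{2}|\xi|^{\f{d(p-1)}{2}}\,d\xi\right)^{\f{2}{p+1}},
\]
and after raising to the power $\f{p+1}{2}$ and combining with the Sobolev bound I obtain $\|g\|_{L^{p+1}}^{p+1}\lesssim \|g\|_{L^{2}}^{p-1}\int_{\rd}|\hat g(\xi)|^{2}|\xi|^{\f{d(p-1)}{2}}\,d\xi$. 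To conclude, I would invoke the elementary pointwise bound $|\xi|^{\f{d(p-1)}{2}}\le |\xi|^{2}+|\xi|^{4}$, which holds for all $\xi$ exactly when $2\le \f{d(p-1)}{2}\le 4$, i.e.\ when $1+\f4d\le p\le 1+\f8d$; this covers the assumed range $1+\f4d\le p<1+\f8d$, and \eqref{612} follows.

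For the impossibility when $p\in(1,1+\f4d)$, I would test the claimed inequality on the rescalings $g(x)=\chi(\eps x)$ of a fixed Schwartz function $\chi$: scaling gives $\|g\|_{L^{p+1}}^{p+1}=\eps^{-d}\|\chi\|_{L^{p+1}}^{p+1}$ on the left, whereas the right-hand side is $O\!\big(\eps^{-\f{d(p-1)}{2}-d}(\eps^{2}+\eps^{4})\big)$, which behaves like $\eps^{\,2-d-\f{d(p-1)}{2}}$ as $\eps\to0+$. Since $\f{d(p-1)}{2}<2$ in this range, the ratio of the left side to the right side grows like $\eps^{\,\f{d(p-1)}{2}-2}\to\infty$, so no finite constant $C_p$ can exist.

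The argument is essentially bookkeeping; the only point requiring a little care is matching the Hölder exponents $\f{p+1}{p-1},\f{p+1}{2}$ so that precisely the prefactor $\|g\|_{L^2}^{p-1}$ and the weight $|\xi|^{d(p-1)/2}$ appear, which is what turns the boundedness condition $2\le \f{d(p-1)}{2}\le 4$ into the stated range $1+\f4d\le p\le 1+\f8d$. This inequality then plays for Theorem \ref{theo:NLS2} the same role Lemma \ref{le:13} played before: it supplies the coercivity needed to exhibit a threshold mass $\la$ and to rule out vanishing in the corresponding existence argument.
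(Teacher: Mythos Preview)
Your proof is correct and is precisely the isotropic version of the argument in Lemma~\ref{le:13} that the paper intends; the paper itself omits the proof, writing only ``The proof proceeds in a similar fashion, so we omit it.'' Your Sobolev--H\"older interpolation yielding the weight $|\xi|^{d(p-1)/2}$, together with the pointwise bound $|\xi|^{d(p-1)/2}\le |\xi|^{2}+|\xi|^{4}$ exactly on the range $2\le d(p-1)/2\le 4$, and the scaling counterexample $g(x)=\chi(\eps x)$ for $p<1+\f4d$, match the paper's approach line by line.
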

The proof proceeds in a similar fashion, so we omit it. A combination of arguments in the flavor of the proofs for Lemma \ref{le:40} and Lemma \ref{le:22} leads us to the following variant of Lemma \ref{le:22} and Lemma \ref{le:44}.
 \begin{lemma}
 \label{le:224}
If  $b<0$ and $p\in [1+\f{4}{d},1+\f{8}{d}) $,  then there exists a finite number $ \la_{b,p}>0$ so that
\begin{itemize}
\item  for all $ \la\leq\la_{b,p} $ we have  $ m_b(\la)=0 $,
\item  for all $ \la>\la_p $  we have $ -\infty<m_b(\la)<0 $. 
\end{itemize} 
In addition, assuming that  $-\infty<m_{b}(\la)<0$, that is
\begin{itemize}
\item $p\in (1, 1+\f{4}{d}),  \la>0$
\item $p\in [1+\f{4}{d},1+\f{8}{d})$ and $\la>\la_{b, p}$. 
\end{itemize}
and $ \phi_k $ be a minimizing sequence for the constrained minimization problem \eqref{700},   there exists a subsequence $ \phi_k $ such that:
$$
 \int_{\rd} |\De \phi_k(x)|^2 dx\to L_1,  \int_{\rd}|\nabla \phi_k(x)|^2dx \to L_2, \  \int_{\rd}|\phi_k(x)|^{p+1}dx\to L_3,  
$$
where $ L_1>0 $, $ L_2 > 0 $ and $ L_3 > 0 $.
\end{lemma}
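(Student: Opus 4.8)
The plan is to mirror, almost line for line, the arguments already given for the mixed--derivative model in Lemmas \ref{le:40}, \ref{le:22} and \ref{le:44} (equivalently Lemmas \ref{ponefive}, \ref{le:2} and \ref{subslemma1} for the one dimensional Kawahara functional); the only differences are in the exponents and in the elementary remark that, because $b<0$, the quadratic part $\frac{1}{2}\int_{\rd}[|\De\phi|^{2}-b|\nabla\phi|^{2}]\,dx=\frac{1}{2}\|\De\phi\|_{L^{2}}^{2}+\frac{|b|}{2}\|\nabla\phi\|_{L^{2}}^{2}$ is a positive definite form, so every coercivity estimate from Section \ref{sec:3} goes through unchanged. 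Here $I[\phi]=\frac{1}{2}\int_{\rd}[|\De\phi|^{2}-b|\nabla\phi|^{2}]\,dx-\frac{1}{p+1}\int_{\rd}|\phi|^{p+1}\,dx$ and $m_{b}(\la)=\inf_{\|\phi\|_{L^{2}}^{2}=\la}I[\phi]$.

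For the threshold statement I would first record four facts. \emph{(i)} $m_{b}(\la)\le 0$ for every $\la$: testing with $\phi_{\ve}(x)=\ve^{d/2}\phi(\ve x)$, $\|\phi\|_{L^{2}}^{2}=\la$, one gets $\|\phi_{\ve}\|_{L^{2}}^{2}=\la$ and $I[\phi_{\ve}]=\frac{\ve^{4}}{2}\|\De\phi\|_{L^{2}}^{2}+\frac{|b|\ve^{2}}{2}\|\nabla\phi\|_{L^{2}}^{2}-\frac{\ve^{d(p-1)/2}}{p+1}\|\phi\|_{L^{p+1}}^{p+1}\to 0$ as $\ve\to0+$. \emph{(ii)} $m_{b}(\la)>-\infty$ whenever $p\in(1,1+\frac{8}{d})$: the GNS inequality \eqref{71} with $q=p+1$ gives $\|\phi\|_{L^{p+1}}^{p+1}\le C\|\De\phi\|_{L^{2}}^{d(p-1)/4}\|\phi\|_{L^{2}}^{(p+1)-d(p-1)/4}$ with $d(p-1)/4<2$, hence $I[\phi]\ge\frac{1}{4}\|\De\phi\|_{L^{2}}^{2}-c_{p,\la,b}\ge-\ga$. \emph{(iii)} $m_{b}(\la)=0$ for all small $\la$: using, as in \eqref{733}, that $m_{b}(\la)/\la$ equals the infimum over $\|\phi\|_{L^{2}}^{2}=1$ of $\frac{1}{2}\int_{\rd}[|\De\phi|^{2}-b|\nabla\phi|^{2}]\,dx-\frac{\la^{(p-1)/2}}{p+1}\int_{\rd}|\phi|^{p+1}\,dx$, I would invoke Lemma \ref{le:133} (absorbing constants using $b<0$) to bound $\int_{\rd}|\phi|^{p+1}\,dx\le C'_{p}\int_{\rd}[|\De\phi|^{2}-b|\nabla\phi|^{2}]\,dx$ on that constraint set; the bracket is then $\ge0$ once $\la^{(p-1)/2}\le(p+1)/(2C'_{p})$, so $m_{b}(\la)\ge0$ and therefore $m_{b}(\la)=0$. \emph{(iv)} $m_{b}(\la)<0$ for all large $\la$: freeze a single admissible $\phi$ with $\int_{\rd}|\phi|^{p+1}>0$ in the infimum of \emph{(iii)}. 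Then $\la_{b,p}:=\sup\{\ga>0:\ m_{b}(\la)=0\ \text{ for all }\ \la\le\ga\}$ is finite and positive by \emph{(iii)}--\emph{(iv)}, and since $\la\mapsto m_{b}(\la)/\la$ is non-increasing, for $\la>\la_{b,p}$ one gets $m_{b}(\la)/\la\le m_{b}(\mu)/\mu<0$ for a suitable $\mu\in(\la_{b,p},\la]$, i.e.\ $m_{b}(\la)<0$; this is the claimed dichotomy.

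For the compactness statement I would argue exactly as in Lemma \ref{subslemma1}/Lemma \ref{le:44}. Given $-\infty<m_{b}(\la)<0$ and a minimizing sequence $\phi_{k}$, the lower bound $I[\phi_{k}]\ge\frac{1}{4}\|\De\phi_{k}\|_{L^{2}}^{2}-c_{p,\la,b}$ together with $I[\phi_{k}]\to m_{b}(\la)$ forces $\sup_{k}\|\De\phi_{k}\|_{L^{2}}<\infty$; then $\|\nabla\phi_{k}\|_{L^{2}}^{2}\le\|\De\phi_{k}\|_{L^{2}}\|\phi_{k}\|_{L^{2}}$ and the GNS bound from \emph{(ii)} give $\sup_{k}(\|\nabla\phi_{k}\|_{L^{2}}+\|\phi_{k}\|_{L^{p+1}})<\infty$. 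Passing to a subsequence, $\|\De\phi_{k}\|_{L^{2}}^{2}\to L_{1}$, $\|\nabla\phi_{k}\|_{L^{2}}^{2}\to L_{2}$, $\|\phi_{k}\|_{L^{p+1}}^{p+1}\to L_{3}$. I would then note $L_{3}\ne0$ (otherwise $m_{b}(\la)=\lim_{k}I[\phi_{k}]=\frac{1}{2}(L_{1}+|b|L_{2})\ge0$, a contradiction) and $L_{1},L_{2}\ne0$: since $\|\phi\|_{L^{p+1}}\lesssim\|\phi\|_{\dot{H}^{s}}$ with $s=d(\frac{1}{2}-\frac{1}{p+1})\in(0,2)$, interpolating between $\dot{H}^{1}$ and $L^{2}$ when $s\le1$ and between $\dot{H}^{2}$ and $\dot{H}^{1}$ when $1<s<2$, and using once more $\|\nabla\phi_{k}\|_{L^{2}}^{2}\le\|\De\phi_{k}\|_{L^{2}}\|\phi_{k}\|_{L^{2}}$, a vanishing $L_{1}$ or $L_{2}$ would force $L_{3}=0$.

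I do not expect a genuine obstacle here: all the tools are already in place (the GNS inequality \eqref{71}, the homogeneity identity \eqref{733}, Lemma \ref{le:133}, and the concentration--compactness arguments of Section \ref{sec:3}), and the pure--Laplacian, $b<0$ case is in fact the most favorable one, the quadratic form having no indefinite part to fight. The only steps requiring some care are matching the exponent constraint $d(p-1)/4<2$ with $p<1+\frac{8}{d}$, matching the exponent appearing in Lemma \ref{le:133} with $p\ge1+\frac{4}{d}$, and, in dimensions $d\ge5$, choosing the correct interpolation pair ($\dot{H}^{1}$--$L^{2}$ versus $\dot{H}^{2}$--$\dot{H}^{1}$) according to whether $s=d(\frac{1}{2}-\frac{1}{p+1})$ is below or above $1$ when verifying $L_{1},L_{2}>0$.
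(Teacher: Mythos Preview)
Your proposal is correct and follows essentially the same approach the paper indicates: the paper does not give a detailed proof of Lemma \ref{le:224} at all, merely stating that it follows by ``a combination of arguments in the flavor of the proofs for Lemma \ref{le:40} and Lemma \ref{le:22}'' together with Lemma \ref{le:133}, which is precisely the template you fill in. Your treatment is in fact more explicit than the paper's, particularly in the verification that $L_{1},L_{2}>0$, where you correctly split into the cases $s\le 1$ and $1<s<2$ for the interpolation; the paper simply writes ``by Sobolev embedding'' in the analogous one--dimensional Lemma \ref{subslemma1} and omits the higher--dimensional argument entirely.
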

 With these tools at hand, the existence of the waves follows in the same manner as before, so we omit the details. 
 
 \section{Stability of the normalized waves}
 \label{sec:5}
 Interestingly, the proof of the spectral stability proceeds by a common argument, both for the Kawahara and the fourth order NLS case. By Proposition \ref{stability}, it suffices to show that $n(\cl_+)=1$, $\cl_-\geq 0$, $\phi_\la\perp Ker[\cl_+]$ and to verify that the index $\dpr{\cl_+^{-1} \phi_\la}{\phi_\la}<0$. Indeed, the condition $n(\cl_+)=1$ was already verified as part of the variational construction, see Proposition \ref{prop:14} and \ref{prop:144}. Similarly, $\cl_-\geq 0$ was verified in the higher dimensional case in Proposition \ref{prop:144}. 
 
 \subsection{Weak non-degeneracy and non-positivity of the Vakhitov-Kolokolov  quantity }
  \begin{lemma}
  	\label{le:wnon}
  	For each constrained minimizer $\phi_\la$, we have that $\phi_\la\perp Ker[\cl_+]$. 
  \end{lemma}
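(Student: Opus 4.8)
The plan is to argue by contradiction, using only two spectral facts about $\cl_+$ that were already extracted in the variational construction and that hold for \emph{every} constrained minimizer $\phi_\la$ (the proof of Proposition \ref{prop:14}, resp. Proposition \ref{prop:144}, only uses that $\de\mapsto I[u_\de]$ attains a minimum at $\de=0$, which is true for any minimizer): namely $n(\cl_+)=1$ and $\dpr{\cl_+ h}{h}\geq 0$ for all $h\perp\phi_\la$. Since $n(\cl_+)=1$, there is a simple lowest eigenvalue $-\mu_0<0$ with a normalized eigenfunction $\chi_0$, $\cl_+\chi_0=-\mu_0\chi_0$, and the rest of the spectrum of $\cl_+$ is nonnegative, so $\cl_+\geq 0$ on $\{\chi_0\}^\perp$.

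First I would check that $\dpr{\phi_\la}{\chi_0}\neq 0$. Testing the Euler--Lagrange equation \eqref{450} (resp.\ \eqref{4501}, \eqref{NLS22}) against $\phi_\la$ gives $\dpr{\cl_+\phi_\la}{\phi_\la}=-(p-1)\int|\phi_\la|^{p+1}<0$; if we had $\phi_\la\perp\chi_0$, then $\cl_+\geq 0$ on $\{\chi_0\}^\perp$ would force $\dpr{\cl_+\phi_\la}{\phi_\la}\geq 0$, a contradiction.

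Now let $\psi\in Ker[\cl_+]$ be arbitrary; the goal is $\dpr{\psi}{\phi_\la}=0$. Since $\psi$ and $\chi_0$ are eigenfunctions of the self-adjoint operator $\cl_+$ for the distinct eigenvalues $0$ and $-\mu_0$, we have $\dpr{\chi_0}{\psi}=0$. Consider
\begin{equation*}
v:=\psi-\frac{\dpr{\psi}{\phi_\la}}{\dpr{\chi_0}{\phi_\la}}\,\chi_0,
\end{equation*}
which is well defined by the previous paragraph and satisfies $\dpr{v}{\phi_\la}=0$, hence $\dpr{\cl_+ v}{v}\geq 0$. On the other hand $\cl_+ v=\frac{\mu_0\dpr{\psi}{\phi_\la}}{\dpr{\chi_0}{\phi_\la}}\chi_0$ and $\dpr{\chi_0}{v}=-\frac{\dpr{\psi}{\phi_\la}}{\dpr{\chi_0}{\phi_\la}}$, so a one-line computation yields
\begin{equation*}
\dpr{\cl_+ v}{v}=-\mu_0\,\frac{\dpr{\psi}{\phi_\la}^2}{\dpr{\chi_0}{\phi_\la}^2}\leq 0 .
\end{equation*}
Comparing the two inequalities forces $\dpr{\psi}{\phi_\la}=0$, as desired.

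The argument is entirely soft and short; the only points requiring a modicum of care are (i) noting that the spectral information $n(\cl_+)=1$ and $\cl_+|_{\{\phi_\la\}^\perp}\geq 0$ applies to an \emph{arbitrary} constrained minimizer, not merely to a distinguished one, and (ii) the non-orthogonality $\dpr{\phi_\la}{\chi_0}\neq 0$, without which the vector $v$ cannot be formed. No additional regularity or decay beyond $\phi_\la\in H^4$, already established, is needed, and the same proof covers the Kawahara case and both NLS cases verbatim.
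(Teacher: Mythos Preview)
Your proof is correct and uses the same two ingredients as the paper --- namely $\cl_+|_{\{\phi_\la\}^\perp}\geq 0$ and $\dpr{\cl_+\phi_\la}{\phi_\la}<0$ --- but the mechanics differ slightly. The paper projects $\Psi\in Ker[\cl_+]$ onto $\{\phi_\la\}^\perp$ by subtracting the $\phi_\la$-component directly: setting $w:=\Psi-\|\phi_\la\|^{-2}\dpr{\Psi}{\phi_\la}\phi_\la$, one computes in one line (using $\cl_+\Psi=0$) that $0\leq\dpr{\cl_+ w}{w}=\|\phi_\la\|^{-4}\dpr{\Psi}{\phi_\la}^2\dpr{\cl_+\phi_\la}{\phi_\la}\leq 0$, forcing $\dpr{\Psi}{\phi_\la}=0$. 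You instead pivot through the ground-state eigenfunction $\chi_0$, which obliges you to first check $\dpr{\phi_\la}{\chi_0}\neq 0$ before you can form $v$. Both arguments are short and sound; the paper's is marginally more economical since it avoids invoking the spectral decomposition of $\cl_+$ and the auxiliary non-orthogonality step, whereas your route makes the role of the single negative direction more explicit.
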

  \begin{proof}
  	Take any element of $Ker[\cl_+]$, say $\Psi: \|\Psi\|_{L^2}=1$. We need to show $\dpr{\Psi}{\phi_\la}=0$.  
  	To this end, consider $\Psi-\|\phi_\la\|^{-2} \dpr{\Psi}{\phi_\la} \phi_\la\perp \phi_\la$. Recall that due to the construction $\cl_+|_{\{\phi_\la\}^\perp}\geq 0$.  We have 
  	$$
  	0\leq 	\dpr{\cl_+[\Psi-\|\phi_\la\|^{-2} \dpr{\Psi}{\phi_\la} \phi_\la\la]}{\Psi-\|\phi_\la\|^{-2} \dpr{\Psi}{\phi_\la} \phi_\la}=\|\phi_\la\|^{-4}  \dpr{\Psi}{\phi_\la}^2 \dpr{\cl_+ \phi_\la}{\phi_\la}\leq 0,
  	$$
  	where we have used that $\dpr{\cl_+ \phi_\la}{\phi_\la}=-(p-1) \int |\phi_\la|^{p+1}<0$. The only way the last chains of inequalities is non-contradictory, is if $\dpr{\Psi}{\phi_\la}=0$, which is the claim. 
  \end{proof}
  Our next result  is a general lemma, which is  of independent interest. 
   \begin{lemma}
  	\label{le:93} 
  	Suppose that $\ch$ is a self-adjoint operator on a Hilbert space $X$, so that $\ch|_{\{\xi_0\}^\perp}\geq 0$. Next, assume $\xi_0\perp Ker[\ch]$, so that $\ch^{-1} \xi_0$ is well-defined. 
  	Finally, assume $\dpr{\ch \xi_0}{\xi_0}\leq 0$. Then
  	$$
  	\dpr{\ch^{-1} \xi_0}{\xi_0}\leq 0.
  	$$ 
  \end{lemma}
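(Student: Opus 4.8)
The plan is to reduce the whole statement to one application of the hypothesis $\ch|_{\{\xi_0\}^\perp}\ge 0$ against a single, explicitly chosen vector of $\{\xi_0\}^\perp$. Put $\psi:=\ch^{-1}\xi_0$, so that $\ch\psi=\xi_0$; then $\dpr{\ch^{-1}\xi_0}{\xi_0}=\dpr{\psi}{\xi_0}$ and also $\dpr{\ch\psi}{\psi}=\dpr{\xi_0}{\psi}=\dpr{\psi}{\xi_0}$, so writing $\beta:=\dpr{\psi}{\xi_0}=\dpr{\ch^{-1}\xi_0}{\xi_0}$ one has $\dpr{\ch\psi}{\psi}=\beta$ as well, and the target becomes $\beta<0$. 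I would then evaluate the non-negativity $0\le\dpr{\ch h}{h}$ at
$$
h:=\psi-\frac{\beta}{\norm{\xi_0}^2}\,\xi_0\ \in\ \{\xi_0\}^\perp,
$$
which does lie in $\{\xi_0\}^\perp$ since $\dpr{h}{\xi_0}=\beta-\beta=0$. Expanding and using $\dpr{\ch\psi}{\psi}=\beta$ together with $\dpr{\ch\psi}{\xi_0}=\dpr{\xi_0}{\xi_0}=\norm{\xi_0}^2$ collapses this to $0\le-\beta+\norm{\xi_0}^{-4}\beta^2\dpr{\ch\xi_0}{\xi_0}$, i.e.
$$
\beta\le\frac{\beta^2}{\norm{\xi_0}^4}\,\dpr{\ch\xi_0}{\xi_0}.
$$
Since $\dpr{\ch\xi_0}{\xi_0}\le 0$ the right-hand side is $\le 0$, so $\beta\le 0$, and $\beta>0$ is impossible outright. (The same computation can be read structurally: $\psi$ minimizes $f\mapsto\dpr{\ch f}{f}$ over the affine hyperplane $\{f:\dpr{f}{\xi_0}=\beta\}$, because $f-\psi\perp\xi_0$ kills the cross term, and $\tfrac{\beta}{\norm{\xi_0}^2}\xi_0$ is a competitor in that hyperplane.)

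The step I expect to be the main obstacle is promoting $\beta\le 0$ to the strict inequality $\beta<0$, i.e. excluding $\beta=0$. If $\beta=0$ the competitor above is simply $h=\psi$, so $\psi\in\{\xi_0\}^\perp$, $\psi\ne 0$ (because $\ch\psi=\xi_0\ne 0$), and $\dpr{\ch\psi}{\psi}=0$; thus $\psi$ would be a nonzero null vector of the non-negative form $\dpr{\ch\cdot}{\cdot}$ restricted to $\{\xi_0\}^\perp$, hence $\psi\in\mathrm{Ker}\bigl(\ch|_{\{\xi_0\}^\perp}\bigr)$, while at the same time $\psi\notin\mathrm{Ker}[\ch]$. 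The argument therefore closes precisely once one controls that the restriction $\ch|_{\{\xi_0\}^\perp}$ acquires no kernel beyond $\mathrm{Ker}[\ch]$, and this is the delicate point that deserves genuine care. In our intended applications $\ch=\cl_+$ and $\xi_0=\phi_\la$, where $n(\cl_+)=1$, the negativity is strict, $\dpr{\cl_+\phi_\la}{\phi_\la}=-(p-1)\norm{\phi_\la}_{L^{p+1}}^{p+1}<0$, and $\mathrm{Ker}[\cl_+]$ consists of symmetry-generated directions (such as $\phi_\la'$) which are orthogonal to $\phi_\la$; I would use these facts, together with the spectral gap of $\cl_+$ at $0$, to rule out a spurious null direction of $\cl_+|_{\{\phi_\la\}^\perp}$ and hence conclude $\beta\ne 0$, so that $\beta<0$.

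As an independent check I would rerun the computation through the spectral splitting coming from $n(\ch)=1$. Write $X=\mathrm{span}\{v_1\}\oplus v_1^\perp$ with $\ch v_1=-\nu v_1$, $\nu>0$, and $\ch\ge 0$ on $v_1^\perp$, and decompose $\xi_0=av_1+\xi_0^\perp$ with $a=\dpr{\xi_0}{v_1}$; here $a\ne 0$, since otherwise $\xi_0\in v_1^\perp$ and $\dpr{\ch\xi_0}{\xi_0}\ge 0$. Then $\dpr{\ch^{-1}\xi_0}{\xi_0}=-\tfrac{a^2}{\nu}+\dpr{\ch^{-1}\xi_0^\perp}{\xi_0^\perp}$ with the second term $\ge 0$, and the hypothesis $\ch|_{\{\xi_0\}^\perp}\ge 0$ is exactly the statement that bounds $\dpr{\ch^{-1}\xi_0^\perp}{\xi_0^\perp}\le\tfrac{a^2}{\nu}$; combined with the strict negativity this again yields $\beta<0$, and makes transparent that the entire lemma rests on $\xi_0$ having nonzero overlap with the unique negative direction of $\ch$. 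I would present the first (hyperplane) proof as the main line and keep this second one as the conceptual picture.
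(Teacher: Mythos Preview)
Your main argument is essentially the paper's: it too chooses the test vector $h=\ch^{-1}\xi_0-\|\xi_0\|^{-2}\dpr{\ch^{-1}\xi_0}{\xi_0}\,\xi_0\in\{\xi_0\}^\perp$, expands $0\le\dpr{\ch h}{h}$, and arrives at $\beta\le\|\xi_0\|^{-4}\beta^{2}\dpr{\ch\xi_0}{\xi_0}\le 0$. For the non-strict conclusion $\beta\le 0$ the two arguments coincide.

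Your hesitation about upgrading to $\beta<0$ is well placed, and in fact the lemma as stated is false. On $\mathbb{R}^{2}$ take $\ch=\bigl(\begin{smallmatrix}0&1\\1&\mu\end{smallmatrix}\bigr)$ with $\mu<0$ and $\xi_0=e_2$: then $\mathrm{Ker}[\ch]=\{0\}$, $\{\xi_0\}^\perp=\mathrm{span}\{e_1\}$ with $\dpr{\ch e_1}{e_1}=0\ge 0$, $\dpr{\ch\xi_0}{\xi_0}=\mu<0$, yet $\ch^{-1}\xi_0=e_1$ and $\dpr{\ch^{-1}\xi_0}{\xi_0}=0$. The paper dispatches $\beta=0$ by asserting that $\dpr{\ch\eta_0}{\eta_0}>0$ for every nonzero $\eta_0\perp\xi_0$, $\eta_0\perp\mathrm{Ker}[\ch]$, and then taking $\eta_0=\ch^{-1}\xi_0$; but that strict positivity does not follow from the stated hypotheses ($\eta_0=e_1$ above violates it), so the paper's proof has precisely the gap you flagged. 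Your plan to close it with the extra structure available in the application (namely that any null vector of $\dpr{\cl_+\cdot}{\cdot}$ on $\{\phi_\la\}^\perp$ already lies in $\mathrm{Ker}[\cl_+]$) is the right repair; equivalently, one should add to the lemma the hypothesis that the form $\dpr{\ch\cdot}{\cdot}$ is strictly positive on $\{\xi_0\}^\perp\cap\mathrm{Ker}[\ch]^\perp\setminus\{0\}$, after which the paper's contradiction argument goes through verbatim.
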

  \begin{proof}
  	We can without loss of generality assume that $\|\xi_0\|=1$.  
  	Consider 
  	$
  	\ch^{-1} \xi_0  - \dpr{\ch^{-1} \xi_0}{\xi_0} \xi_0\perp \xi_0. 
  	$
  	It follows that 
  	\begin{eqnarray*}
  		0 &\leq & \dpr{\ch[ \ch^{-1} \xi_0  - \dpr{\ch^{-1} \xi_0}{\xi_0} \xi_0]}{ \ch^{-1} \xi_0  - \dpr{\ch^{-1} \xi_0}{\xi_0} \xi_0}= \\
  		&=& \dpr{\xi_0- \dpr{\ch^{-1} \xi_0}{\xi_0}  \ch \xi_0}{ \ch^{-1} \xi_0  - \dpr{\ch^{-1} \xi_0}{\xi_0} \xi_0}=\\
  		&=& - \dpr{\ch^{-1} \xi_0}{\xi_0} \dpr{\ch \xi_0}{\ch^{-1} \xi_0} +\dpr{\ch^{-1} \xi_0}{\xi_0}^2 \dpr{\ch\xi_0}{\xi_0}=\\
  		&=& - \dpr{\ch^{-1} \xi_0}{\xi_0} +\dpr{\ch^{-1} \xi_0}{\xi_0}^2 \dpr{\ch\xi_0}{\xi_0} \leq - \dpr{\ch^{-1} \xi_0}{\xi_0},
  	\end{eqnarray*}
  	where we have used the assumption $\dpr{\ch\xi_0}{\xi_0} \leq 0$. 
  	It follows that 
  	$\dpr{\ch^{-1} \xi_0}{\xi_0}\leq 0$, which is the claim. 
  \end{proof}
  {\bf Remark:} Unfortunately, it is impossible to conclude that $\dpr{\ch^{-1} \xi_0}{\xi_0}<0$, based on the assumptions made in Lemma \ref{le:93}. It turns out that such a statement is in general false, that is it is in general impossible to rule out $\dpr{\ch^{-1} \xi_0}{\xi_0}\neq 0$. 
  
  To that end, consider  the following example\footnote{We owe this to a generous remark made by an anonymous referee in response to our initial claims to the contrary.}: Take $H=\rtwo$ and $\ch=\left(\begin{array}{cc}
-1 & 1 \\ 1 & 0
  \end{array}\right)$, $\xi_0=\left(\begin{array}{c}
  1 \\  0
  \end{array}\right)$, which has $Ker[\ch]=\{0\}$, $\dpr{\ch \xi_0}{\xi_0}=-1<0$, while $\dpr{\ch^{-1} \xi_0}{\xi_0}=0$. Nevertheless, we always have $\dpr{\ch^{-1} \xi_0}{\xi_0}\leq 0$ as claimed in Lemma \ref{le:93}. 
  
  \subsection{Conclusion of the proof of spectral stability}

  Apply Lemma \ref{le:93} to the vector $\xi_0:=\phi_\la$ and the operator $\ch:=\cl_+$. Recall that as a byproduct  of the construction of $\phi_\la$, we have established  the property $\cl_+|_{\{\phi_\la\}^\perp}\geq 0$. By Lemma \ref{le:wnon}, we have that $\phi_\la\perp Ker[\cl_+]$. Finally, $\dpr{\cl_+ \phi_\la}{\phi_\la}<0$ was established as well (and used repeatedly throughout). Thus, we conclude that $\dpr{\cl_+^{-1} \phi_\la}{\phi_\la}\leq 0$. Clearly, our additional assumption, namely $\dpr{\cl_+^{-1} \phi_\la}{\phi_\la}\neq 0$ guarantees that $\dpr{\cl_+^{-1} \phi_\la}{\phi_\la}<0$, which is enough for the spectral stability by Corollary \ref{stability}.  It would be interesting to see whether one can   prove  $\dpr{\cl_+^{-1} \phi_\la}{\phi_\la}\neq 0$ in a straightforward manner, instead of making it an extra requirement.
  
  These arguments establish rigorously the spectral stability of the waves for the Kawahara made in  Theorem \ref{theo:Kawstab}   and in the high dimensional  fourth order NLS problems  in 
  Theorem \ref{theo:NLS} and Theorem \ref{theo:NLS2}.


\end{document}